\numberwithin{equation}{section}
\newtheorem{thmintro}{Theorem}
\newtheorem{theorem}{Theorem}[section]
\newtheorem{lemma}[theorem]{Lemma}
\newtheorem{prop}[theorem]{Proposition}
\theoremstyle{definition}
\newtheorem{remark}[theorem]{Remark}
\newtheorem{example}[theorem]{Example}
\newcommand{\CC}{\mathbb{C}}
\newcommand{\HHH}{\mathbb{H}}
\newcommand{\KK}{\mathbb{K}}
\newcommand{\NN}{\mathbb{N}}
\newcommand{\QQ}{\mathbb{Q}}
\newcommand{\RR}{\mathbb{R}}
\newcommand{\ZZ}{\mathbb{Z}}
\newcommand{\BBB}{\mathcal{B}}
\newcommand{\III}{\mathcal{I}}
\newcommand{\LLL}{\mathcal{L}}
\newcommand{\TTT}{\mathcal{T}}
\newcommand{\WW}{\mathcal{W}}
\newcommand{\HH}{\mathcal{H}}
\newcommand{\PP}{\mathcal{P}}
\newcommand{\g}{\mathfrak{g}}
\newcommand{\kk}{\mathfrak{k}}
\newcommand{\uu}{\mathfrak{u}}
\newcommand{\ttt}{\mathfrak{t}}
\newcommand{\bc}{\mathbf{c}}
\newcommand{\bd}{\mathbf{d}}
\newcommand{\inv}{^{-1}}
\newcommand{\la}{\lambda}
\newcommand{\co}{\colon\thinspace}
\newcommand{\gl}{{\mathfrak g}{\mathfrak l}}
\DeclareMathOperator{\ad}{ad}
\DeclareMathOperator{\Ad}{Ad}
\DeclareMathOperator{\sppan}{span}
\DeclareMathOperator{\GL}{GL}
\DeclareMathOperator{\End}{End}
\DeclareMathOperator{\Aut}{Aut}
\DeclareMathOperator{\id}{id}
\DeclareMathOperator{\im}{im}
\DeclareMathOperator{\der}{der}
\DeclareMathOperator{\diag}{diag}
\DeclareMathOperator{\conv}{conv}
\begin{document}

\renewcommand{\proofname}{{\bf Proof}}

\title{Isomorphisms of twisted Hilbert loop algebras}

\author[Timoth\'ee Marquis]{Timoth\'ee \textsc{Marquis}$^*$}
\address{Department Mathematik, FAU Erlangen-Nuernberg, Cauerstrasse 11, 91058 Erlangen, Germany}
\email{marquis@math.fau.de}
\thanks{$^*$Supported by a Marie Curie Intra-European Fellowship}

\author[Karl-Hermann Neeb]{Karl-Hermann \textsc{Neeb}$^\dagger$}
\address{Department Mathematik, FAU Erlangen-Nuernberg, Cauerstrasse 11, 91058 Erlangen, Germany}
\email{neeb@math.fau.de}
\thanks{$^\dagger$Supported by DFG-grant NE 413/7-2, Schwerpunktprogramm ``Darstellungstheorie"}
\subjclass[2010]{17B65, 17B70, 17B22, 17B10}

\begin{abstract}{
The closest infinite dimensional relatives of compact Lie algebras 
are Hilbert--Lie algebras, i.e. real Hilbert spaces with a Lie algebra 
structure for which the scalar product is invariant. 
Locally affine Lie algebras (LALAs) 
correspond to double extensions of (twisted) loop algebras 
over simple Hilbert--Lie algebras $\mathfrak{k}$, also called affinisations of $\mathfrak{k}$. 
They possess a root space decomposition 
whose corresponding root system is a locally affine root system 
of one of the $7$ families $A_J^{(1)}$, $B_J^{(1)}$, $C_J^{(1)}$, $D_J^{(1)}$, $B_J^{(2)}$, $C_J^{(2)}$ and $BC_J^{(2)}$ for some infinite set $J$. To each of these types corresponds a ``minimal" affinisation of some simple Hilbert--Lie algebra $\mathfrak{k}$, which we call standard.

In this paper, we give for each affinisation $\mathfrak{g}$ of a simple Hilbert--Lie algebra $\mathfrak{k}$ an explicit isomorphism from $\mathfrak{g}$ to one of the standard affinisations of $\mathfrak{k}$. 
The existence of such an isomorphism could also be derived from the classification 
of locally affine root systems, but 
for representation theoretic purposes it is crucial to obtain it explicitely 
as a deformation between two twists which is compatible 
with the root decompositions. 
We illustrate this by applying our isomorphism theorem to the study of positive energy highest weight representations of $\mathfrak{g}$. 

In subsequent work, the present paper will be used to obtain a complete classification 
of the positive energy highest weight representations of affinisations of $\mathfrak{k}$. 
}\end{abstract}

\maketitle

\section{Introduction}
Locally affine Lie algebras (LALAs) are natural generalisations of both affine Kac--Moody algebras and split locally finite Lie algebras. They were first introduced in \cite{MY06} as a subclass of the so-called \emph{locally extended affine Lie algebras} (LEALAs), and were later classified up to isomorphism in \cite{MY15} (see also \cite{Ne09}).
The LALAs roughly correspond to double extensions of (twisted) loop algebras over locally finite simple split Lie algebras (algebraic point of view), or equivalently, over simple Hilbert--Lie algebras $\kk$ (analytic point of view) -- in the latter case, we call such a LALA an \emph{affinisation} of $\kk$. The LALAs possess a root space decomposition with respect to some maximal abelian subalgebra, whose corresponding root system is a so-called \emph{locally affine root system} (LARS). The LARS were classified in \cite{YY08}, and those of infinite rank fall into $7$ distinct families of isomorphism classes, parametrised by the types $A_J^{(1)}$, $B_J^{(1)}$, $C_J^{(1)}$, $D_J^{(1)}$, $B_J^{(2)}$, $C_J^{(2)}$ and $BC_J^{(2)}$ for some infinite set $J$. To each of these types corresponds a ``minimal" affinisation of some simple Hilbert--Lie algebra $\kk$, which we call \emph{standard}.

In this paper, we give for each affinisation $\g$ of a simple Hilbert--Lie algebra $\kk$ an explicit isomorphism from $\g$ to one of the standard affinisations of $\kk$. The existence of such an isomorphism can also be derived from the classification of locally affine root systems, 
but for representation theoretic purposes 
it is crucial to obtain it in an explicit form. We illustrate this by applying our isomorphism theorem to the study of positive energy highest weight representations of $\g$, building on previous results from \cite{Ne09} on unitary highest weight representations of LALAs. 

Note that our results do not rely on the classification of LALAs from \cite{MY15} or of LARS from \cite{YY08}.

\medskip

We now present the main results of this paper in more detail, refering to Sections~\ref{section:HLA} and \ref{section:AOHLA} below for a more thorough account of the concepts presented. 
A \emph{Hilbert--Lie algebra} $\kk$ is a real Lie algebra as well as a real Hilbert space, whose scalar product $\langle\cdot,\cdot\rangle$ is invariant under the adjoint action, that is, such that $\langle [x,y],z\rangle =\langle x,[y,z]\rangle$ for all $x,y,z\in\kk$. By a theorem of Schue (\cite{Schue61}), any infinite-dimensional simple Hilbert--Lie algebra is isomorphic to the space $\kk:=\uu_2(\HH_{\KK})$ of skew-symmetric Hilbert-Schmidt operators on some Hilbert space $\HH_{\KK}$ over $\KK\in\{\RR,\CC,\HHH\}$. Any maximal abelian subalgebra $\ttt$ of $\kk$ (a \emph{Cartan subalgebra} of $\kk$) yields a root space decomposition $$\kk_{\CC}=\ttt_{\CC}\oplus\widehat{\bigoplus}_{\alpha\in\Delta}{\kk_{\CC}^{\alpha}}$$ of the complexification of $\kk$, whose corresponding set of roots $\Delta=\Delta(\kk,\ttt)\subseteq i\ttt^*$ is an irreducible locally finite root system of infinite rank, hence of one of the types $A_J$, $B_J$, $C_J$ or $D_J$ 
(see \cite{NeSt01} and \cite{LN04}). More precisely, if $\KK=\CC$ or $\KK=\HHH$, then $\kk$ possesses only one conjugacy class of Cartan subalgebras and $\Delta=A_J$ or $\Delta=C_J$, respectively. If $\KK=\RR$, then $\kk$ possesses two conjugacy classes of Cartan subalgebras, yielding the root systems $\Delta=B_J$ and $\Delta=D_J$.

Given an automorphism $\varphi\in\Aut(\kk)$ of finite order $N\in\NN$, there is a Cartan subalgebra $\ttt$ of $\kk$ such that $\ttt^{\varphi}$ is maximal abelian in $\kk^{\varphi}=\{x\in\kk \ | \ \varphi(x)=x\}$. In particular, $\kk_{\CC}$ possesses a $\varphi$-invariant root space decomposition $$\kk_{\CC}=\ttt^{\varphi}_{\CC}\oplus\widehat{\bigoplus}_{\alpha\in\Delta_{\varphi}}{\kk_{\CC}^{\alpha}}$$ with respect to $\ttt_0:=\ttt^{\varphi}$, with corresponding root system $\Delta_{\varphi}=\Delta(\kk,\ttt_0)\subseteq i\ttt_0^*$.

The \emph{$\varphi$-twisted loop algebra}\footnote{In the literature, the $\varphi$-twisted loop algebra is also defined as a space of $2\pi$-periodic smooth maps, instead of $2\pi N$-periodic smooth maps as in our definition. The reparametrisation needed to pass from one definition to the other is detailed in Remark~\ref{remark:conventions} below.} on $\kk$ is defined as $$\LLL_{\varphi}(\kk)=\big\{\xi\in C^{\infty}(\RR,\kk) \ \big| \ \xi(t+2\pi)=\varphi\inv(\xi(t)) \ \forall t\in\RR\big\}.$$
The scalar product $\langle \cdot,\cdot\rangle$ on $\kk$ can be extended to a non-degenerate invariant symmetric bilinear form on $\LLL_{\varphi}(\kk)$ by setting $$\langle\xi,\eta\rangle:=\frac{1}{2\pi }\int_{0}^{2\pi}{\langle\xi(t),\eta(t)\rangle \ dt}.$$
We  denote by $\langle\cdot,\cdot\rangle$ the hermitian extension of this scalar product to the complexification of $\LLL_{\varphi}(\kk)$.

Let $\der_0(\LLL_{\varphi}(\kk),\langle\cdot,\cdot\rangle)$ denote the space of skew-symmetric derivations $D$ of $\LLL_{\varphi}(\kk)$ that are \emph{diagonal}, in the sense that $D$ preserves each space $e^{int/N}\otimes \kk_{\CC}^{\alpha}$ for $n\in\ZZ$ and $\alpha\in\Delta_{\varphi}$.
Let $D_0\in \der_0(\LLL_{\varphi}(\kk),\langle\cdot,\cdot\rangle)$ be defined by $D_0(\xi):=\xi'$. Given a weight $\nu\in i\ttt_0^*$, 
we also define the derivation $\overline{D}_{\nu}$ of $\kk_{\CC}$ by setting $$\overline{D}_{\nu}(x)=i\nu(\alpha^{\sharp})x\quad\textrm{for all $x\in\kk_{\CC}^{\alpha}$, $\alpha\in\Delta_{\varphi}$,}$$ 
where $\alpha^{\sharp}\in i\ttt_0$ is such that $\langle h,\alpha^{\sharp}\rangle=\alpha(h)$ for all $h\in\ttt_0$. Then $\overline{D}_{\nu}$ restricts to a skew-symmetric derivation of $\kk$, which we extend to a diagonal derivation of $\LLL_{\varphi}(\kk)$ by setting
$\overline{D}_{\nu}(\xi)(t):=\overline{D}_{\nu}(\xi(t))$ for all $\xi\in\LLL_{\varphi}(\kk)$ and $t\in\RR$. Note that $\der_0(\LLL_{\varphi}(\kk),\langle\cdot,\cdot\rangle)$ is spanned by $D_0$ and all such $\overline{D}_{\nu}$ (see \cite[Theorem~7.2 and Lemma~8.6]{MY15}).
We set $$D_{\nu}:=D_0+\overline{D}_{\nu}\in \der_0(\LLL_{\varphi}(\kk),\langle\cdot,\cdot\rangle).$$
Then $D_{\nu}$ defines a $2$-cocycle $\omega_{D_{\nu}}(x,y)=\langle D_{\nu}x,y\rangle$ on $\LLL_{\varphi}(\kk)$, and extends to a derivation $\widetilde{D}_{\nu}(z,x):=(0,D_{\nu}x)$ of the corresponding central extension $\RR\oplus_{\omega_{D_{\nu}}}\LLL_{\varphi}(\kk)$. The double extension
$$\widehat{\LLL}_{\varphi}^{\nu}(\kk):=(\RR\oplus_{\omega_{D_{\nu}}}\LLL_{\varphi}(\kk))\rtimes_{\widetilde{D}_{\nu}}\RR$$
is called the (\emph{$\nu$-slanted and $\varphi$-twisted}) \emph{affinisation} of the Hilbert--Lie algebra $(\kk,\langle\cdot,\cdot\rangle)$. If $\nu=0$, we also simply write $\widehat{\LLL}_{\varphi}(\kk)=\widehat{\LLL}_{\varphi}^{\nu}(\kk)$.
The Lie bracket on $\widehat{\LLL}_{\varphi}^{\nu}(\kk)$ is given by 
$$[(z_1,x_1,t_1),(z_2,x_2,t_2)]=(\omega_{D_{\nu}}(x_1,x_2), [x_1,x_2]+t_1D_{\nu}x_2-t_2D_{\nu}x_1,0).$$
The subalgebra $\ttt_0^e:=\RR\oplus\ttt_0\oplus\RR$ is maximal abelian in $\widehat{\LLL}_{\varphi}^{\nu}(\kk)$ and yields a root space decomposition of $\widehat{\LLL}_{\varphi}^{\nu}(\kk)_{\CC}$ with corresponding set of roots $\widehat{\Delta}_{\varphi}=\Delta(\widehat{\LLL}_{\varphi}^{\nu}(\kk),\ttt_0^e)\subseteq i(\ttt_0^e)^*$. One can realise $\widehat{\Delta}_{\varphi}$ as a subset of $(\Delta_{\varphi}\cup\{0\})\times\ZZ$, where to $(\alpha,n)\in\widehat{\Delta}_{\varphi}$ corresponds the root space $e^{int/N}\otimes \kk_{\CC}^{(\alpha,n)}$ with $$\kk_{\CC}^{(\alpha,n)}:=\{x\in\kk_{\CC}^{\alpha} \ | \ \varphi\inv(x)=e^{2in\pi/N}x\}.$$ 
The set $(\widehat{\Delta}_{\varphi})_c:=\widehat{\Delta}_{\varphi}\cap (\Delta_{\varphi}\times\ZZ)$ of \emph{compact roots} of $\widehat{\Delta}_{\varphi}$ is then a LARS, hence isomorphic to one of the root systems $X_J^{(1)}$ or $Y_J^{(2)}$, for $X\in\{A,B,C,D\}$ and $Y\in\{B,C,BC\}$. The root systems of type $X_J^{(1)}$ can be realised as $\Delta(\widehat{\LLL}_{\psi}(\kk),\ttt_0^e)_c$ with $\psi=\id$ and $\kk$ such that $\Delta(\kk,\ttt)=X_J$. The three root systems of type $Y_J^{(2)}$ can be realised as $\Delta(\widehat{\LLL}_{\psi}(\kk),\ttt_0^e)_c$ for some order $2$ automorphism $\psi$ of $\kk$ and some suitable $\kk$. The corresponding three automorphisms $\psi$ are described in \cite[\S 2.2]{Hloopgroups} (see also Section~\ref{section:root_data}) 
and are called \emph{standard}. The above seven Lie algebras $\widehat{\LLL}_{\psi}(\kk)$ are also called \emph{standard}.

Here is the announced isomorphism theorem.
\begin{thmintro}\label{thm:mainintro}
Let $\kk$ be a simple Hilbert--Lie algebra and let $\varphi\in\Aut(\kk)$ be of finite order $N\in\NN$. Then there exist
\begin{itemize}
\item
an automorphism $\psi\in\Aut(\kk)$ which is either the identity or standard,
\item
a smooth one-parameter group $(\phi_t)_{t\in\RR}$ of automorphisms of $\kk$ commuting with $\varphi$ such that $\varphi=\phi_1\psi$,
\item
a maximal abelian subalgebra $\ttt$ of $\kk$ such that $\ttt^{\varphi}=\ttt^{\psi}=:\ttt_0$ and such that $\ttt_0$ is maximal abelian in both $\kk^{\varphi}$ and $\kk^{\psi}$,
\item
and a linear functional $\mu\in i\ttt_0^*$, 
\end{itemize}
such that, for any $\nu\in i\ttt_0^*$, the following assertions hold:
\begin{enumerate}
\item[(i)]
The map $$\widehat{\Phi}\co \widehat{\LLL}_{\varphi}^{\nu}(\kk)\to \widehat{\LLL}_{\psi}^{\mu+\nu}(\kk):(z_1,\xi(t),z_2)\mapsto (z_1,\phi_t(\xi(t)),z_2)$$ is an isomorphism of Lie algebras fixing the Cartan subalgebra
$\ttt_0^e:=\RR\oplus\ttt_0\oplus\RR$ pointwise.
\item[(ii)]
$\widehat{\Phi}$ induces an isomorphism of locally affine root systems given by
$$\pi\co \Delta(\widehat{\LLL}_{\varphi}^{\nu}(\kk),\ttt_0^e)_c\to 
\Delta(\widehat{\LLL}_{\psi}^{\mu+\nu}(\kk),\ttt_0^e)_c:(\alpha,n)\mapsto 
\Big(\alpha,N_{\psi}\cdot\big(\tfrac{n}{N}-\mu(\alpha^{\sharp})\big)\Big),$$
where $N_{\psi}\in\{1,2\}$ is the order of $\psi$.
\item[(iii)]
The Weyl groups of $\widehat{\LLL}_{\varphi}^{\nu}(\kk)$ and $\widehat{\LLL}_{\psi}^{\mu+\nu}(\kk)$ with respect to $\ttt_0^e$ coincide.
\end{enumerate}
\end{thmintro}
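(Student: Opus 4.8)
The plan is to construct the quadruple $(\psi,(\phi_t)_{t\in\RR},\ttt,\mu)$ explicitly from a spectral analysis of $\varphi$ in Schue's operator model $\kk=\uu_2(\HH_\KK)$, and then to check assertions (i)--(iii) by direct computation. The mechanism behind the whole statement is that $\widehat\Phi$ acts fibrewise on each root space $e^{int/N}\otimes\kk_\CC^{(\alpha,n)}$ of $\widehat{\LLL}_\varphi^\nu(\kk)$, multiplying a vector $x\in\kk_\CC^\alpha$ by $\phi_t(x)=e^{t\ad A}x$; this only \emph{rescales the frequency} $n/N$ in a way depending on the root $\alpha$, which is exactly the content of the map $\pi$ in (ii).

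First I would produce the factorisation $\varphi=\phi_1\psi$. By Schue's theorem (\cite{Schue61}) I may realise $\varphi$ as a conjugation by a $\KK$-(semi)linear isometry of $\HH_\KK$, possibly precomposed with the standard conjugation or flip of \S\ref{section:root_data}; the semilinear part is an involution and yields the \emph{discrete} factor $\psi$, which is either $\id$ or one of the three standard order-$2$ automorphisms, while the linear part is a unitary/orthogonal/symplectic operator of finite order modulo scalars. Diagonalising this operator (in the twisted case, first bringing the finite-order \emph{antilinear} operator to a normal form compatible with $\psi$), I choose an orthonormal eigenbasis of $\HH_\KK$ and let $\ttt$ be the associated diagonal Cartan subalgebra; the normal form adapts $\varphi$ and $\psi$ simultaneously, so that $\ttt^\varphi=\ttt^\psi=:\ttt_0$ is maximal abelian in both $\kk^\varphi$ and $\kk^\psi$. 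I then take a logarithm $A\in\ttt_0$ of the inner factor $\varphi\psi\inv$ (its spectrum consists of roots of unity, so a $\varphi$-fixed logarithm exists) and set $\phi_t:=\exp(t\ad A)$. Since $A\in\ttt$, each $\phi_t$ fixes $\ttt$ pointwise, whence $\varphi|_\ttt=\psi|_\ttt$ and $\ttt^\varphi=\ttt^\psi$; since $\varphi(A)=A$, the group $(\phi_t)$ commutes with $\varphi$; and $\phi_1\psi=\varphi$ by the choice of $A$. Finally I define $\mu\in i\ttt_0^*$ by $\ad A=-\overline{D}_\mu$, i.e. $\alpha(A)=-i\mu(\alpha^\sharp)$ for all $\alpha\in\Delta_\varphi$.

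I expect this first step to be the main obstacle: separating $\varphi$ into a standard part of order $\le 2$ and a \emph{$\varphi$-fixed} inner torus part, and in particular producing a logarithm lying in $\ttt_0$ together with a Cartan subalgebra $\ttt_0$ that is simultaneously maximal abelian in $\kk^\varphi$ and $\kk^\psi$, is delicate in the twisted case, where $\varphi$ is antilinear and the defining operator admits an invariant logarithm only after passing to the correct normal form. Granting this, assertion (i) is largely formal. The map $\widehat\Phi$ is visibly fibrewise an automorphism, hence preserves the bracket and, as $\ad A$ is skew, the invariant form pointwise. Its well-definedness, i.e. that it maps $\LLL_\varphi(\kk)$ into $\LLL_\psi(\kk)$, is the substantive point: writing a root vector as $\xi(t)=e^{int/N}x$ with $x\in\kk_\CC^{(\alpha,n)}$ one computes $\phi_t(\xi(t))=e^{it(n/N-\mu(\alpha^\sharp))}x$, and the construction of $A$ forces the frequency $n/N-\mu(\alpha^\sharp)$ into $\tfrac{1}{N_\psi}\ZZ$ and $x$ to be the corresponding $\psi$-eigenvector (up to the reparametrisation of Remark~\ref{remark:conventions}). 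Compatibility with the double extension reduces to intertwining the derivations: from $\tfrac{d}{dt}\phi_t=\phi_t\circ\ad A=-\phi_t\circ\overline{D}_\mu$ and the fact that $\overline{D}_\mu,\overline{D}_\nu$ commute with every $\phi_t$, one gets $D_0\circ\widehat\Phi=\widehat\Phi\circ(D_0-\overline{D}_\mu)$ and hence $\widehat\Phi\circ D_\nu=D_{\mu+\nu}\circ\widehat\Phi$, which is precisely why the target must be slanted by $\mu+\nu$; together with the invariance of the form this matches the cocycles $\omega_{D_\nu}$ and $\omega_{D_{\mu+\nu}}$.

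Assertion (ii) is then read off the same frequency computation, since $\widehat\Phi$ fixes the $\ttt_0$-component $\alpha$ and sends the label $n$ to $N_\psi(n/N-\mu(\alpha^\sharp))$; that $\pi$ is an isomorphism of locally affine root systems follows because $\widehat\Phi$ is a Lie algebra isomorphism carrying root spaces to root spaces. For (iii) I would argue that $\widehat\Phi$ fixes $\ttt_0^e=\RR\oplus\ttt_0\oplus\RR$ pointwise and intertwines the two affinisations, so it conjugates the Weyl group of $\widehat{\LLL}_\varphi^\nu(\kk)$ to that of $\widehat{\LLL}_\psi^{\mu+\nu}(\kk)$ by the identity on $\ttt_0^e$; as each Weyl group is generated by the reflections in its compact roots and is thus determined by its action on $\ttt_0^e$, and as $\pi$ matches these reflections, the two groups coincide as subgroups of $\GL(\ttt_0^e)$.
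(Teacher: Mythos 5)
Your strategy coincides in outline with the paper's: realise $\varphi=\pi_A$ for a finite-order unitary or antiunitary operator $A$ (Lemma~\ref{lemma:choiceA}), factor $\varphi=\phi_1\psi$ with $\psi$ trivial or standard and $(\phi_t)$ generated by a bounded diagonal skew-symmetric operator, let $\ttt$ be the associated diagonal Cartan subalgebra, read off $\mu$ from the generator, and feed all of this into a general untwisting statement. Your verification of (i)--(iii) is exactly the paper's Lemma~\ref{lemma:untwisting} and Proposition~\ref{prop:untwisting_extension2}: the intertwining relation $D_{\mu+\nu}\circ\widehat\Phi=\widehat\Phi\circ D_\nu$ (which matches the cocycles $\omega_{D_\nu}$ and $\omega_{D_{\mu+\nu}}$), the frequency computation giving $\pi$, and the equality of coroots $(\alpha,n)^\vee=\pi(\alpha,n)^\vee$, whence equality (not merely conjugacy) of the two Weyl groups inside $\GL(\ttt_0^e)$.

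The genuine gap is in the construction of $(\psi,(\phi_t),\ttt,\mu)$, which you correctly flag as the main obstacle but then essentially assume. Concretely, your criterion that ``the semilinear part is an involution and yields the discrete factor $\psi$'' fails for $\KK=\RR$: there every implementing operator $A$ is genuinely $\RR$-linear, so your recipe would always return $\psi=\id$, yet Example~\ref{ex:RL} shows that when both eigenspaces $\HH_{\pm1}$ of $A$ are finite- and odd-dimensional one is forced to take $\psi=\pi_B$ with $B$ a hyperplane reflection (the standard automorphism of type $B_J^{(2)}$). The reason is a parity phenomenon, not a semilinearity phenomenon: after pairing the eigenbasis of $\HH_1\oplus\HH_{-1}$ into $A$-invariant planes, one vector is left over in each eigenspace, and on the plane they span $A$ acts as $\diag(1,-1)$, an obstruction that neither a one-parameter group nor the replacement of $A$ by $-A$ can absorb. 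Beyond this, the two steps you ``grant'' --- the normal form of a finite-order antiunitary operator compatible with the standard conjugation/flip (Proposition~\ref{prop:structure_antiunitary}) and the verification that $\ttt^\varphi=\ttt^\psi=\ttt_0$ is maximal abelian in \emph{both} $\kk^\varphi$ and $\kk^\psi$ --- are precisely where the paper's proof does its work, case by case in Examples~\ref{ex:CL}, \ref{ex:HL}, \ref{ex:RL} and \ref{ex:CA}; granting them grants the theorem. Two smaller inaccuracies: the generator of $(\phi_t)$ is bounded and diagonal but in general not Hilbert--Schmidt, so your logarithm lies in $i\widehat{\ttt_0}$ (i.e.\ in $\uu(\HH_\KK)$), not in $\ttt_0$; and the realisation $\varphi=\pi_A$ is the automorphism classification \cite[Theorem~1.15]{Hloopgroups} recalled in \S\ref{subsection:AG}, not Schue's theorem, which classifies the algebras themselves.
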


For each given pair $(\kk,\varphi)$, the parameters $\psi$, $(\phi_t)_{t\in\RR}$, $\ttt$ and $\mu$ whose existence is asserted in Theorem~\ref{thm:mainintro} are described explicitely in Section~\ref{section:root_data} below. The proof of Theorem~\ref{thm:mainintro} can be found at the end of Section~\ref{section:root_data}. 

Along the proof, we obtain an explicit description of the structure of finite order 
antiunitary operators on complex Hilbert spaces which may be 
of independent interest (see Proposition~\ref{prop:structure_antiunitary} below).

\medskip

We next state an application of our results to positive energy highest weight representations of $\g_{\nu}:=\widehat{\LLL}_{\varphi}^{\nu}(\kk)$. 
Let $\la\in i(\ttt_0^e)^*$ be an \emph{integral} weight of $\g_{\nu}$, in the sense that $\la$ takes integral values on all coroots $(\alpha,n)^{\vee}$, $(\alpha,n)\in (\widehat{\Delta}_{\varphi})_c$ (cf. \S\ref{subsection:RD} below). Assume moreover that $\la(\bc)\neq 0$, where $\bc:=(i,0,0)\in i\ttt_0^e$.
It then follows from \cite[Theorem~4.10]{Ne09} that $\g_{\nu}$ admits 
an integrable (irreducible) highest-weight representation
$$\rho_{\la}=\rho_{\la}^{\nu}\co  \g_{\nu}\to\End(L_{\nu}(\la))$$
with highest weight $\la$ and highest weight vector $v_{\la}\in L_{\nu}(\la)$.
In fact, $\rho_{\la}$ is even unitary with respect to some 
inner product on $L_{\nu}(\la)$ which is uniquely determined up to a positive factor (see \cite[Theorem~4.11]{Ne09}).

Let $\nu'\in i\ttt_0^*$, and extend the derivation $D_{\nu'}=D_0+\overline{D}_{\nu'}$ of $\LLL_{\varphi}(\kk)\subseteq\g_{\nu}$ to a skew-symmetric derivation of $\g_{\nu}$ by requiring that $D_{\nu'}(\ttt_0^e)=\{0\}$. Then $\rho_{\la}$ can be extended to a representation
$$\widetilde{\rho}_{\la}=\widetilde{\rho}_{\la}^{\thinspace\nu,\nu'}\co \g_{\nu}\rtimes \RR D_{\nu'}\to \End(L_{\nu}(\la))$$
of the semi-direct product $\g_{\nu}\rtimes \RR D_{\nu'}$ such that $\widetilde{\rho}_{\la}(D_{\nu'})$ annihilates the highest weight vector $v_{\la}$.
The representation $\widetilde{\rho}_{\la}$ is said to be of \emph{positive energy} if the spectrum of $H_{\nu'}:=-i\widetilde{\rho}_{\la}(D_{\nu'})$ is bounded from below. If this is the case, the infimum of the spectrum of $H_{\nu'}$ is called the \emph{minimal energy level} of $\widetilde{\rho}_{\la}$.

In the following theorem, we identify $i\ttt_0^*$ with the subspace $\{\mu\in i(\ttt_0^e)^* \ | \ \mu(\bc)=\mu(\bd)=0\}$ of $i(\ttt_0^e)^*$, where $\bd:=(0,0,-i)\in i\ttt_0^e$.
\begin{thmintro}\label{thm:PECintro}
Let $(\widehat{\LLL}_{\varphi}^{\nu}(\kk),\ttt_0^e)$ be as above, and let $\la\in i(\ttt_0^e)^*$ be an integral weight with $\la(\bc)\neq 0$. Let $\mu\in i\ttt_0^*$ and $\psi\in\Aut(\kk)$ of order $N_{\psi}\in\{1,2\}$ be the parameters provided by Theorem~\ref{thm:mainintro}, and denote by $\widehat{\WW}_{\psi}\subseteq\GL(i(\ttt_0^e)^*)$ and $\widehat{\Delta}_{\psi}\subseteq i(\ttt_0^e)^*$ the Weyl group and root system of the standard affinisation $\widehat{\LLL}_{\psi}(\kk)$ of $\kk$.
Then for any $\nu,\nu'\in i\ttt_0^*$, the following assertions are equivalent:
\begin{enumerate}
\item[(i)]
The highest weight representation $\widetilde{\rho}_{\la}\co \widehat{\LLL}_{\varphi}^{\nu}(\kk)\rtimes \RR D_{\nu'}\to \End(L_{\nu}(\la))$ is of positive energy.
\item[(ii)]
$M_{\mu,\nu,\nu'}:=\inf \chi(\widehat{\WW}_{\psi}.\la_{\mu+\nu}-\la_{\mu+\nu})>-\infty$, where $$\la_{\mu+\nu}:=\la-\la(\bc)(\mu+\nu)\in i(\ttt_0^e)^*\quad\textrm{and}\quad\chi\co\ZZ[\widehat{\Delta}_{\psi}]\to\RR:(\alpha,n)\mapsto (\mu+\nu')(\alpha^{\sharp})+n/N_{\psi}.$$
\end{enumerate}
Moreover, if $M_{\mu,\nu,\nu'}>-\infty$, then $M_{\mu,\nu,\nu'}$ is the minimal energy level of $\widetilde{\rho}_{\la}$.
\end{thmintro}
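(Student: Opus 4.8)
The plan is to deduce Theorem~\ref{thm:PECintro} from the isomorphism theorem by transporting the representation $\widetilde{\rho}_{\la}$, together with its energy operator, to the \emph{unslanted} standard affinisation $\widehat{\LLL}_{\psi}(\kk)$, where the spectrum of the energy operator can be read off the weights and then reduced to a single Weyl group orbit.

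First I would apply Theorem~\ref{thm:mainintro}. Since $\widehat{\Phi}$ fixes $\ttt_0^e$ pointwise, it carries $\rho_{\la}$ to an integrable highest weight representation of $\widehat{\LLL}_{\psi}^{\mu+\nu}(\kk)$ with the \emph{same} highest weight $\la$. The essential point is to follow the energy derivation: on the root space labelled $(\alpha,n)$ the diagonal operator $-iD_{\nu'}$ acts by $\tfrac{n}{N}+\nu'(\alpha^{\sharp})$, and by the root isomorphism $\pi$ of Theorem~\ref{thm:mainintro}(ii) this equals $\tfrac{m}{N_{\psi}}+(\mu+\nu')(\alpha^{\sharp})=\chi(\alpha,m)$ for $(\alpha,m)=\pi(\alpha,n)$. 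As $D_{\nu'}$ and $D_{\mu+\nu'}$ are both diagonal and annihilate $\ttt_0^e$, this shows that $\widehat{\Phi}$ extends to an isomorphism $\g_{\nu}\rtimes\RR D_{\nu'}\to\widehat{\LLL}_{\psi}^{\mu+\nu}(\kk)\rtimes\RR D_{\mu+\nu'}$ intertwining $H_{\nu'}$ with $-i\widetilde{\rho}_{\la}(D_{\mu+\nu'})$. Hence both positive energy and the minimal energy level are invariant under this reduction, and I may assume the twist to be $\psi$, with slanting parameter $\mu+\nu$ and energy parameter $\mu+\nu'$.

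Next I would absorb the slanting. Because $\overline{D}_{\mu+\nu}=\ad(i(\mu+\nu)^{\sharp})$ is an \emph{inner} derivation of $\kk$ (with $i(\mu+\nu)^{\sharp}\in\ttt_0\subseteq\kk$), the derivations $D_{\mu+\nu}$ and $D_0$ of $\LLL_{\psi}(\kk)$ differ by an inner derivation, so their cocycles differ by a coboundary and the double extensions $\widehat{\LLL}_{\psi}^{\mu+\nu}(\kk)$ and $\widehat{\LLL}_{\psi}(\kk)$ are isomorphic through an explicit shift of the derivation element (described via the root data of Section~\ref{section:root_data}). Under this isomorphism the highest weight $\la$ is sent to $\la_{\mu+\nu}=\la-\la(\bc)(\mu+\nu)$, while the energy functional on the unslanted roots of $\widehat{\LLL}_{\psi}(\kk)$ becomes exactly $\chi\co(\alpha,n)\mapsto(\mu+\nu')(\alpha^{\sharp})+n/N_{\psi}$. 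We have thus reduced to the unslanted standard affinisation with highest weight $\la_{\mu+\nu}$ and energy functional $\chi$. In this standard setting I would compute the spectrum directly. The operator $H:=-i\widetilde{\rho}_{\la_{\mu+\nu}}(D_{\mu+\nu'})$ is diagonal on the weight spaces and annihilates the highest weight vector; the derivation identity $[H,\rho(x_{\gamma})]=\chi(\gamma)\rho(x_{\gamma})$ for a root vector $x_{\gamma}$, together with additivity of $\chi$ on the root lattice $\ZZ[\widehat{\Delta}_{\psi}]$, yields that $H$ acts on the weight space of $\Lambda$ by $\chi(\Lambda-\la_{\mu+\nu})$. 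Consequently the spectrum of $H$ is the closure of $\{\chi(\Lambda-\la_{\mu+\nu}):\Lambda\in\PP(\la_{\mu+\nu})\}$, where $\PP(\la_{\mu+\nu})$ is the set of weights, and positive energy is equivalent to $\inf_{\Lambda}\chi(\Lambda-\la_{\mu+\nu})>-\infty$, this infimum being the minimal energy level. Finally, since $\chi(\,\cdot-\la_{\mu+\nu})$ is affine and $\PP(\la_{\mu+\nu})$ lies in $\conv(\widehat{\WW}_{\psi}.\la_{\mu+\nu})$ while $\widehat{\WW}_{\psi}.\la_{\mu+\nu}\subseteq\PP(\la_{\mu+\nu})$, the infimum over all weights coincides with the infimum over the orbit $\widehat{\WW}_{\psi}.\la_{\mu+\nu}$, which is precisely $M_{\mu,\nu,\nu'}$.

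The main obstacle is this last reduction to the Weyl group orbit. In finite or affine rank it is the classical statement that the weights of an integrable module lie in the convex hull of the Weyl orbit of the highest weight, with the extreme weights being exactly that orbit; here it rests on the convexity theorem for unitary highest weight representations of these Hilbert--Lie affinisations (as developed in \cite{Ne09}), and one must check that the affine functional $\chi(\,\cdot-\la_{\mu+\nu})$ is controlled by the (possibly infinite and unbounded) orbit, so that no boundedness from below is lost when passing to the closed convex hull. The other delicate point is the bookkeeping of the slanting-absorption isomorphism of the second step, namely verifying that it sends $\la$ precisely to $\la_{\mu+\nu}$ and $D_{\mu+\nu'}$ to the derivation with energy functional $\chi$; this is routine but must be carried out carefully against the explicit root data.
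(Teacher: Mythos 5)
Your first step is correct and is essentially the paper's reduction: transporting $\widetilde{\rho}_{\la}$ through $\widehat{\Phi}$ (the paper gets the same effect from Theorem~\ref{thm:mainintro}(iii) alone), and your third step is the paper's use of the weight-set formula $\PP_{\la}=\conv(\widehat{\WW}.\la)\cap(\la+\ZZ[\widehat{\Delta}_{\psi}])$ from \cite{Ne09}, i.e.\ the ingredients (\ref{eqn:Pmu})--(\ref{eqn:PEC}). The genuine gap is your second step, ``absorbing the slanting''. You assert that $\overline{D}_{\mu+\nu}=\ad\big(i(\mu+\nu)^{\sharp}\big)$ is an \emph{inner} derivation because $i(\mu+\nu)^{\sharp}\in\ttt_0\subseteq\kk$. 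That inclusion is false in general: the map $\sharp$ takes values in $i\widehat{\ttt_0}$, not in $i\ttt_0$. Elements of $\kk=\uu_2(\HH_{\KK})$ are Hilbert--Schmidt, whereas in Example~\ref{ex:CL} one has $\mu^{\sharp}=\sum_{j\in J}\mu_jE_j$ with $\mu_j=-n_j/N$; as soon as infinitely many $n_j\neq 0$, infinitely many eigenvalues are bounded away from $0$, so $\mu^{\sharp}$ is bounded but not Hilbert--Schmidt, hence $\mu^{\sharp}\notin\ttt_0$. Worse, for an arbitrary $\nu\in i\ttt_0^*$ the operator $\nu^{\sharp}\in i\widehat{\ttt_0}$ need not even be bounded (this is exactly why Proposition~\ref{prop:untwisting_extension2} must \emph{assume} $\Lambda_{\mu}$ bounded, and why the paper introduces $\widehat{\ttt}$ at all). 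Consequently $D_{\mu+\nu}$ and $D_0$ do not differ by an inner derivation of $\LLL_{\psi}(\kk)$, the cocycles $\omega_{D_{\mu+\nu}}$ and $\omega_{D_0}$ do not differ by a coboundary, and there is no ``shift of the derivation element'' identifying $\widehat{\LLL}_{\psi}^{\mu+\nu}(\kk)$ with $\widehat{\LLL}_{\psi}(\kk)$ over a fixed $\ttt_0^e$.

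This is not a bookkeeping issue one can patch; it is precisely the obstruction that Section~\ref{section:COWG} of the paper is designed to circumvent. Since the slanting cannot be removed by a Lie algebra isomorphism, the paper replaces your step 2 by a purely combinatorial statement about Weyl-group orbits: the identity $\la(\widehat{\WW}_{\nu}.\chi-\chi)=\la_{\nu}(\widehat{\WW}_{0}.\chi_{\nu}-\chi_{\nu})$ of Proposition~\ref{prop:slantedPEC}, proved through the explicit orbit formula of Lemma~\ref{lemma:description_action_W} and the isomorphism $\widehat{\gamma}_{\nu}\co\widehat{\WW}_{\nu}\to\widehat{\WW}_{0}$ of Proposition~\ref{prop:isomWeyl}. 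Note that $\widehat{\WW}_{\nu}$ and $\widehat{\WW}_{0}$ are \emph{different} subgroups of $\GL(\widehat{\ttt_0^e})$, and $\widehat{\gamma}_{\nu}$ is induced by a conjugation (namely by $\tau_{-i\nu^{\sharp}}$) only in the special case $\nu^{\sharp}\in i\ttt_0$ --- the Remark following Proposition~\ref{prop:isomWeyl} records exactly this. So after your (valid) first step, your argument still owes a proof of $\inf\chi\big(\widehat{\WW}_{\mu+\nu}.\la-\la\big)=\inf\chi\big(\widehat{\WW}_{\psi}.\la_{\mu+\nu}-\la_{\mu+\nu}\big)$, which is the actual content of Proposition~\ref{prop:slantedPEC}, and your proposal supplies no valid route to it.
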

The proof of Theorem~\ref{thm:PECintro} can be found at the end of Section~\ref{section:COWG}.
Note that the ``standard" Weyl groups $\widehat{\WW}_{\psi}$ were given an explicit description in \cite[\S 3.4]{convexhull}, making the computation of $M_{\mu,\nu,\nu'}$ in the above theorem tractable. Using Theorem~\ref{thm:PECintro}, we will give in \cite{PEClocaff} a characterisation 
of all pairs $(\nu,\nu')$ yielding a positive energy representation $\widetilde{\rho}_{\la}^{\thinspace\nu,\nu'}$ as above, analoguous to the characterisation obtained in \cite{PEClocfin} for positive energy highest weight representations of locally finite split simple Lie algebras.

\section{Hilbert--Lie algebras}\label{section:HLA}
The general reference for this section is \cite[Section~1]{Hloopgroups}.

\subsection{Hilbert--Lie algebras}
A \emph{Hilbert--Lie algebra} $\kk$ is a real Lie algebra endowed with the structure of a real Hilbert space such that the scalar product $\langle \cdot,\cdot\rangle$ is invariant under the adjoint action, that is,
$$\langle [x,y],z\rangle=\langle x,[y,z]\rangle \quad\textrm{for all $x,y,z\in\kk$}.$$ 
By a theorem of Schue (\cite{Schue61}), every simple infinite-dimensional Hilbert--Lie algebra is isomorphic to 
$$\uu_2(\HH):=\{x\in B_2(\HH) \ | \ x^*=-x\}$$ for some infinite-dimensional Hilbert space $\HH$ over $\KK\in\{\RR,\CC,\HHH\}$, with scalar product given by
$$\langle x,y\rangle =\mathrm{tr}_{\RR}(xy^*)=-\mathrm{tr}_{\RR}(xy).$$
Here $\gl_2(\HH)=B_2(\HH)$ denotes the space of Hilbert-Schmidt operators on $\HH$. Note that if $\KK=\CC$, the complex conjugation on $\gl_2(\HH)$ is given by $\sigma(x)=-x^*$, and hence $\gl_2(\HH)$ can be identified with the complexification $\kk_{\CC}$ of $\kk:=\uu_2(\HH)$.

\subsection{Root decomposition}\label{subsection:RD}
Let $\g$ be a real Lie algebra and let $\g_{\CC}$ be its complexification, with complex conjugation $\sigma$ fixing $\g$ pointwise. Write $x^*:=-\sigma(x)$ for $x\in\g_{\CC}$, so that
$\g=\{x\in\g_{\CC} \ | \ x^*=-x\}$. Let $\ttt\subseteq\g$ be a maximal abelian subalgebra (a \emph{Cartan subalgebra}) with complexification $\ttt_{\CC}\subseteq\g_{\CC}$. For a linear functional $\alpha\in\ttt_{\CC}^*$, let
$$\g_{\CC}^{\alpha}:=\{x\in\g_{\CC} \ | \ [h,x]=\alpha(h)x \ \forall h\in\ttt_{\CC}\}$$
denote the corresponding \emph{root space}. Let also
$$\Delta:=\Delta(\g,\ttt):=\{\alpha\in\ttt_{\CC}^*\setminus\{0\} \ | \ \g_{\CC}^{\alpha}\}$$ 
be the \emph{root system} of $\g$ with respect to $\ttt$. Then $\g_{\CC}^0=\ttt_{\CC}$ and $[\g_{\CC}^{\alpha},\g_{\CC}^{\beta}]\subseteq \g_{\CC}^{\alpha+\beta}$ for all $\alpha,\beta\in \Delta\cup\{0\}$.

Assume that $\g$ is the Lie algebra of a group $G$ with an exponential function. Then $\ttt$ is called \emph{elliptic} if the subgroup $e^{\ad \ttt}=\Ad(\exp\ttt)\subseteq\Aut(\g)$ is equicontinuous. This implies in particular that $$\alpha\in i\ttt^*=\{\beta\in\ttt_{\CC}^* \ | \ \beta(\ttt)\subseteq i\RR\}\quad\textrm{for all $\alpha\in\Delta$},$$ 
and hence that $$\sigma(\g_{\CC}^{\alpha})=\g_{\CC}^{-\alpha}\quad\textrm{for all $\alpha\in\Delta$}.$$

A root $\alpha\in\Delta$ is called \emph{compact} if $\g_{\CC}^{\alpha}=\CC x_{\alpha}$ is one-dimensional and $\alpha([x_{\alpha},x_{\alpha}^*])>0$, so that
$$\sppan_{\CC}\{x_{\alpha},x_{\alpha}^*,[x_{\alpha},x_{\alpha}^*]\}\cap\g\cong \mathfrak{su}_2(\CC).$$
We denote by $\Delta_c$ the set of compact roots. If $\alpha\in\Delta_c$, there is a unique element $\check{\alpha}\in \ttt_{\CC}\cap [\g_{\CC}^{\alpha},\g_{\CC}^{-\alpha}]$ with $\alpha(\check{\alpha})=2$, called the \emph{coroot} of $\alpha$. Note that $\check{\alpha}\in i\ttt$. The \emph{Weyl group} $\WW=\WW(\g,\ttt)$ of $(\g,\ttt)$ is the subgroup of $\GL(\ttt)$ generated by the reflections
$$r_{\alpha}(x):=x-\alpha(x)\check{\alpha}\quad\textrm{for $\alpha\in\Delta_c$.}$$

\subsection{Locally finite root systems}\label{subsection:LFRS}
Let $\HH_{\KK}$ be some infinite-dimensional Hilbert space over $\KK\in\{\RR,\CC,\HHH\}$, and let $\kk=\uu_2(\HH_{\KK})$ be the corresponding simple Hilbert--Lie algebra, with invariant scalar product $\langle\cdot,\cdot\rangle$.
Let $\ttt\subseteq\kk$ be a maximal abelian subalgebra. It then follows from \cite{Schue61} that $\ttt$ is elliptic and that $\ttt_{\CC}\subseteq \kk_{\CC}$ defines a root space decomposition
$$\kk_{\CC}=\ttt_{\CC}\oplus\widehat{\bigoplus}_{\alpha\in\Delta}{\kk_{\CC}^{\alpha}}$$
which is a Hilbert space direct sum with respect to the hermitian extension, again denoted $\langle \cdot,\cdot\rangle$, of the scalar product to $\kk_{\CC}$. Moreover, all roots in $\Delta=\Delta(\kk,\ttt)\subseteq i\ttt^*$ are compact. In addition, there is an orthonormal basis $\BBB=\{E_j \ | \ j\in J\}\subseteq i\ttt$ of $\ttt_{\CC}\cong \ell^2(J,\CC)$ consisting of diagonal operators with respect to some orthonormal basis $\{e_j \ | \ j\in J'\}$ of $\HH_{\KK}$ (or of bloc-diagonal operators with $2\times 2$ blocs if $\KK=\RR$), such that $\BBB$
contains all coroots $\check{\alpha}$ ($\alpha\in\Delta$) in its $\ZZ$-span,  such that $\Delta$ is contained in the $\ZZ$-span of the linearly independent system $\{\epsilon_j \ | \ j\in J\}\subseteq i\ttt^*$ defined by $\epsilon_j(E_k)=\delta_{jk}$, and such that $\Delta$ is one of the following four infinite irreducible locally finite root systems of type $A_J$, $B_J$, $C_J$ or $D_J$:
\begin{equation*}
\begin{aligned}
A_J&:=\{\epsilon_j-\epsilon_k \ | \ j,k\in J, \ j\neq k\},\\
B_J&:=\{\pm \epsilon_j, \pm\epsilon_j\pm\epsilon_k \ | \ j,k\in J, \ j\neq k\},\\
C_J&:=\{\pm 2\epsilon_j, \pm\epsilon_j\pm\epsilon_k \ | \ j,k\in J, \ j\neq k\},\\
D_J&:=\{\pm\epsilon_j\pm\epsilon_k \ | \ j,k\in J, \ j\neq k\}.\\
\end{aligned}
\end{equation*}
If $\KK=\CC$ or $\KK=\HHH$, then $\kk$ possesses only one conjugacy class of Cartan subalgebras and $\Delta=A_J$ or $\Delta=C_J$, respectively (see \cite[Examples~1.10 and 1.12]{Hloopgroups}). If $\KK=\RR$, then $\kk$ possesses two conjugacy classes of Cartan subalgebras, yielding the root systems $\Delta=B_J$ and $\Delta=D_J$ (see \cite[Example~1.13]{Hloopgroups}). The above root data for $\kk$ will be described in more detail in Section~\ref{section:root_data} below.

Set $$\widehat{\ttt}:=\Big\{\sum_{j\in J}{x_jE_j} \ | \ x_j\in i\RR\Big\}\subseteq\uu(\HH_{\KK}^0),$$
where 
$$\HH_{\KK}^0:=\sppan_{\KK}\{e_j \ | \ j\in J'\}$$
is a pre-Hilbert-space with completion $\HH_{\KK}$. Note that any element of $\ttt$ is determined by its restriction to $\HH_{\KK}^0$; we will also view $\ttt$ as a subset of $\widehat{\ttt}$. The reason for this unusual convention is that we wish to define an inverse map for the injection of $\ttt$ in its dual $\ttt^*$ that is defined on the whole of $\ttt^*$.
More precisely, the assignment $\epsilon_j\mapsto E_j$, $j\in J$, defines an $\RR$-linear map $\sharp\co i\ttt^*\to i\widehat{\ttt}:\mu\mapsto\mu^{\sharp}$ such that 
$$\alpha(\mu^{\sharp}):=\langle\mu^{\sharp},\alpha^{\sharp}\rangle=\mu(\alpha^{\sharp})\quad\textrm{for all $\mu\in i\ttt^*$ and $\alpha\in\Delta$,}$$
where we have extended the scalar product $\langle\cdot,\cdot\rangle$ on $i\ttt\times i\ttt$ to $i\widehat{\ttt}\times i\ttt$.
For each $\alpha,\beta\in\Delta$, we set 
$$(\alpha,\beta):=\langle \alpha^{\sharp},\beta^{\sharp}\rangle.$$
Then $$\check{\alpha}=\tfrac{2}{(\alpha,\alpha)}\alpha^{\sharp}\quad\textrm{for all $\alpha\in\Delta$}.$$

\subsection{Automorphism groups}\label{subsection:AG}
Let $\kk=\uu_2(\HH_{\KK})$ for some infinite-dimensional Hilbert space $\HH_{\KK}$ over $\KK\in\{\RR,\CC,\HHH\}$. By \cite[Theorem~1.15]{Hloopgroups}, every automorphism $\varphi$ of $\kk$ is of the form
$$\varphi=\pi_A\co \kk\to\kk:x\mapsto AxA\inv,$$
for some unitary (for $\KK=\RR,\CC,\HHH$) or antiunitary (for $\KK=\CC$) operator $A$ on $\HH_{\KK}$. In particular, every automorphism of $\kk$ is isometric\footnote{In the classification theorem \cite[Theorem~1.15]{Hloopgroups}, the automorphisms $\varphi$ of $\kk$ are assumed to be isometric: this is used to show that if $\varphi(x_{\alpha})=\chi(\alpha)x_{\alpha}$ for $x_{\alpha}\in\kk_{\CC}^{\alpha}$ and a homomorphism $\chi\co \langle\Delta\rangle_{\mathrm{grp}}\to \CC^{\times}$, then $\im(\chi)\subseteq \mathbb{T}$. But this also follows from 
the fact that $\varphi$ preserves the real form $\kk$ of $\kk_\CC$.}.
We recall that an operator $A$ on $\HH_{\CC}$ is called \emph{antiunitary} if it is antilinear and satisfies 
$$\langle Ax,Ay\rangle =\overline{\langle x,y\rangle}\quad\textrm{for all $x,y\in\HH_{\CC}$.}$$

\section{Affinisations of Hilbert--Lie algebras}\label{section:AOHLA}
In this section, we let $(\kk,\langle\cdot,\cdot\rangle)$ be a simple Hilbert--Lie algebra and $\varphi$ be an automorphism of $\kk$ of finite order $N\in\NN$, and we set $\zeta:=e^{2i\pi/N}\in\CC$. The general reference for this section is \cite[Section~2]{Hloopgroups}.

\subsection{Finite order automorphisms}\label{subsection:FOA}
Let $\ttt_0$ be a maximal abelian subalgebra of 
\[ \kk^{\varphi}=\{x\in \kk \ | \ \varphi(x)=x\}.\]
 Then the centraliser in $\kk$ of $\ttt_0$ is a maximal abelian subalgebra $\ttt$ of $\kk$ by \cite[Lemma~D.2]{Hloopgroups}. Thus $\ttt_0=\ttt^{\varphi}=\ttt\cap \kk^{\varphi}$. 

Since $\varphi(\ttt)=\ttt$, it follows from \S\ref{subsection:LFRS} that the Lie algebra $\kk_{\CC}$ decomposes as an orthogonal direct sum of $\varphi$-invariant $\ttt^{\varphi}$-weight spaces $\kk_{\CC}^{\beta}$ for $\beta\in i(\ttt^{\varphi})^*$. Let
$$\Delta_{\varphi}:=\Delta(\kk,\ttt^{\varphi}):=\{\beta\in i(\ttt^{\varphi})^*\setminus\{0\} \ | \ \kk_{\CC}^{\beta}\neq\{0\}\}$$
denote the set of nonzero $\ttt^{\varphi}$-weight in $\kk$, and for each $n\in\ZZ$ and $\beta\in\Delta_{\varphi}\cup\{0\}$, set $$\kk_{\CC}^{(\beta,n)}:=\kk_{\CC}^{\beta}\cap\kk_{\CC}^n \quad\textrm{where}\quad \kk_{\CC}^n:=\{x\in\kk_{\CC} \ | \ \varphi\inv(x)=\zeta^nx\}.$$
Thus $$\kk_{\CC}^{\beta}=\sum_{n=0}^{N-1}{\kk_{\CC}^{(\beta,n)}}.$$
Moreover, $\dim \kk_{\CC}^{(\beta,n)}\leq 1$ for all $\beta\in\Delta_{\varphi}$ and $n\in\ZZ$ by \cite[Appendix~D]{Hloopgroups}.
For each $n\in\ZZ$, we let $\Delta_n\subseteq i(\ttt^{\varphi})^*$ denote the set of nonzero $\ttt^{\varphi}$-weights in $\kk_{\CC}^n$, that is, the set of $\beta\in\Delta_{\varphi}$ such that $\dim \kk_{\CC}^{(\beta,n)}=1$. Note that $\Delta_{\varphi}=\Delta_0=\Delta(\kk,\ttt)$ if $\varphi=\id$.

As $\big\langle \kk_{\CC}^{\alpha},\kk_{\CC}^{\beta}\big\rangle = \{0\}$ for all $\alpha,\beta\in \Delta_{\varphi}\cup\{0\}$ with $\alpha\neq\beta$, and as $\big\langle \kk_{\CC}^{m},\kk_{\CC}^{n}\big\rangle = \{0\}$ if $m+n\notin N\ZZ$, the restriction of $\langle\cdot,\cdot\rangle$ to $\ttt^{\varphi}_{\CC}=\kk_{\CC}^{(0,0)}$ is non-degenerate. In particular, the map $\sharp\co i\ttt^*\to i\widehat{\ttt}$ from \S\ref{subsection:LFRS} factors through a map 
$$\sharp\co i(\ttt^{\varphi})^*\to i\widehat{\ttt^{\varphi}}\subseteq i\widehat{\ttt}:\mu\mapsto\mu^{\sharp}$$
making the diagram
$$\begin{CD}
i\ttt^* @>\sharp>> i\widehat{\ttt}\\
@VVV @AAA\\
i(\ttt^{\varphi})^* @>\sharp>> i\widehat{\ttt^{\varphi}}
\end{CD}$$
commute. In other words, if $\mu\in i(\ttt^{\varphi})^*$, then $\mu^{\sharp}$ is the unique element of $i\widehat{\ttt^{\varphi}}$ satisfying $\langle \mu^{\sharp},h\rangle=\mu(h)$ for all $h\in i\ttt^{\varphi}$. As before, we set 
$$(\alpha,\beta):=\langle \alpha^{\sharp},\beta^{\sharp}\rangle\quad\textrm{for all $\alpha,\beta\in\Delta_{\varphi}$.}$$

For $x\in\kk_{\CC}^{(\beta,n)}$ ($\beta\in\Delta_{\varphi}$, $n\in\ZZ$), we have $[x,x^*]\in \kk_{\CC}^{(0,0)}=\ttt_{\CC}^{\varphi}$, and for $h\in\ttt_{\CC}^{\varphi}$,
$$\langle h,[x,x^*]\rangle =\langle [h,x],x\rangle=\beta(h)\langle x,x\rangle =\langle h,\langle x,x\rangle\beta^{\sharp}\rangle$$
and hence
$$[x,x^*]=\langle x,x\rangle \beta^{\sharp}.$$
In particular, choosing $x\in\kk_{\CC}^{(\beta,n)}$ such that $\langle x,x\rangle=\tfrac{2}{(\beta,\beta)}$, we may define as before the \emph{coroot} of $\beta$ as
$$\check{\beta}:=[x,x^*]= \frac{2}{(\beta,\beta)}\beta^{\sharp}.$$

\subsection{Loop algebras}\label{subsection:LA}
Consider the \emph{$\varphi$-twisted loop algebra} $$\LLL_{\varphi}(\kk)=\big\{\xi\in C^{\infty}(\RR,\kk) \ \big| \ \xi(t+2\pi)=\varphi\inv(\xi(t)) \ \forall t\in\RR\big\},$$ with Lie bracket
$[\xi,\eta](t)=[\xi(t),\eta(t)]$. If $\varphi=\id$, we simply write $\LLL(\kk):=\LLL_{\id}(\kk)$ for the corresponding untwisted loop algebra. We extend the scalar product $\langle \cdot,\cdot\rangle$ on $\kk$ to a non-degenerate invariant symmetric bilinear form on $\LLL_{\varphi}(\kk)$ by setting $$\langle\xi,\eta\rangle:=\frac{1}{2\pi}\int_{0}^{2\pi }{\langle\xi(t),\eta(t)\rangle \ dt}.$$
We again denote by $\langle \cdot,\cdot\rangle$ the unique hermitian extension of this form to  $\LLL_{\varphi}(\kk)_{\CC}$, and we write 
$$\sigma(\xi)(t):=-\xi^*(t)=-\xi(t)^*, \quad x\in \LLL_{\varphi}(\kk)_{\CC},$$
for the corresponding complex conjugation on $\LLL_{\varphi}(\kk)_{\CC}$.

Given $n\in\ZZ$ and $t\in\RR$, we set $$e_n(t):=e^{int/N},$$
so that $e_n\otimes x\in C^{\infty}(\RR,\kk)$ for all $x\in\kk$. 
Note then that for any $x\in\kk_{\CC}$, 
$$\xi:=e_n\otimes x\in\LLL_{\varphi}(\kk)_{\CC}\iff x\in\kk_{\CC}^n$$
because $\xi(t+2\pi)=\zeta^n\xi(t)$ and $\varphi\inv(\xi(t))=e_n(t)\otimes\varphi\inv(x)$.

\subsection{Derivations}\label{subsection:D}
Let $\der(\LLL_{\varphi}(\kk),\langle\cdot,\cdot\rangle)$ denote the space of derivations $D$ of $\LLL_{\varphi}(\kk)$ that are skew-symmetric with respect to $\langle\cdot,\cdot\rangle$, that is, such that $\langle D\xi,\eta\rangle=-\langle \xi,D\eta\rangle$ for all $\xi,\eta\in\LLL_{\varphi}(\kk)$. Let $D_0\in \der(\LLL_{\varphi}(\kk),\langle\cdot,\cdot\rangle)$ be defined by $$D_0(\xi)=\xi'\quad\textrm{for all $\xi\in \LLL_{\varphi}(\kk)$.}$$ Given $\mu\in i(\ttt^{\varphi})^*$, we also define the derivation $\overline{D}_{\mu}$ of $\kk_{\CC}$ by setting $$\overline{D}_{\mu}(x)=i\mu(\alpha^{\sharp})x\quad\textrm{for all $x\in\kk_{\CC}^{\alpha}$, $\alpha\in\Delta_{\varphi}$.}$$ 
Since $\mu(\alpha^{\sharp})\in \RR$ for all $\alpha\in\Delta_{\varphi}$, $\overline{D}_{\mu}$ restricts to a skew-symmetric derivation of $\kk$. 
Note that $\overline{D}_{\mu}$ commutes with $\varphi$ since it stabilises each $\kk_{\CC}^{(\beta,n)}$ for $\beta\in\Delta_{\varphi}$, $n\in\ZZ$. Hence it extends to a skew-symmetric derivation of $\LLL_{\varphi}(\kk)$ by setting
$$\overline{D}_{\mu}(\xi)(t):=\overline{D}_{\mu}(\xi(t))\quad\textrm{for all $\xi\in\LLL_{\varphi}(\kk)$ and $t\in\RR$.}$$
Finally, we set $D_{\mu}:=D_0+\overline{D}_{\mu}\in \der(\LLL_{\varphi}(\kk),\langle\cdot,\cdot\rangle)$, so that 
\begin{equation}\label{eqn:Dmu}
D_{\mu}(e_n\otimes x)=i\big(\tfrac{n}{N}+\mu(\alpha^{\sharp})\big)(e_n\otimes x)
\end{equation}
for all $\alpha\in\Delta_{\varphi}$ and $x\in\kk_{\CC}^{(\alpha,n)}$.

\subsection{Double extensions}\label{subsection:DE}
We define on $\LLL_{\varphi}(\kk)$ the $2$-cocycle
$$\omega_{D_{\mu}}(x,y)=\langle D_{\mu}x,y\rangle\quad\textrm{for all $x,y\in \LLL_{\varphi}(\kk)$}.$$
Let $\RR\oplus_{\omega_{D_{\mu}}}\LLL_{\varphi}(\kk)$ be the corresponding central extension, with Lie bracket
$$[(z_1, x_1), (z_2, x_2)]=(\omega_{D_{\mu}}(x_1,x_2), [x_1,x_2]).$$
Extend $D_{\mu}$ to a derivation $\widetilde{D}_{\mu}$ of $\RR\oplus_{\omega_{D_{\mu}}}\LLL_{\varphi}(\kk)$ by 
$$\widetilde{D}_{\mu}(z,x):=(0,D_{\mu}x).$$
Let $$\g:=\widehat{\LLL}_{\varphi}^{\mu}(\kk):=(\RR\oplus_{\omega_{D_{\mu}}}\LLL_{\varphi}(\kk))\rtimes_{\widetilde{D}_{\mu}}\RR$$
denote the corresponding double extension, with Lie bracket
$$[(z_1,x_1,t_1),(z_2,x_2,t_2)]=(\omega_{D_{\mu}}(x_1,x_2), [x_1,x_2]+t_1D_{\mu}x_2-t_2D_{\mu}x_1,0).$$
The Lie algebra $\g$ is called the (\emph{$\mu$-slanted and $\varphi$-twisted}) \emph{affinisation} of the Hilbert--Lie algebra $(\kk,\langle\cdot,\cdot\rangle)$. 
If $\varphi=\id_{\kk}$ (resp. $\mu=0$), we also simply write $\widehat{\LLL}^{\mu}(\kk)$ (resp. $\widehat{\LLL}_{\varphi}(\kk)$) instead of $\widehat{\LLL}_{\varphi}^{\mu}(\kk)$.
Note that in terms of the hermitian extension of $\langle \cdot,\cdot\rangle$ to $\LLL_{\varphi}(\kk)_{\CC}$, the Lie bracket on $\g_{\CC}$ is given by
$$[(z_1,x_1,t_1),(z_2,x_2,t_2)]=(\langle D_{\mu}x_1,-x_2^*\rangle, [x_1,x_2]+t_1D_{\mu}x_2-t_2D_{\mu}x_1,0).$$
One can endow $\g$ with the non-degenerate invariant symmetric bilinear form $\kappa\co \g\times\g\to\RR$ defined by
$$\kappa((z_1,x_1,t_1),(z_2,x_2,t_2))=\langle x_1,x_2\rangle+z_1t_2+z_2t_1,$$
thus turning $\g$ into a \emph{quadratic Lie algebra}.
Moreover, the subalgebra
$$\ttt_{\g}^{\varphi}:=\RR\oplus\ttt^{\varphi}\oplus\RR$$
is maximal abelian and elliptic in $\g$. The root system $\Delta_{\g}:=\Delta(\g,\ttt_{\g})$ can be identified with the set $$\Delta_{\g}=\{(\alpha,n) \ | \ n\in\ZZ, \ \alpha\in\Delta_n\cup\{0\}\}\setminus\{(0,0)\},$$ where
$$(\alpha,n)(z,h,t):=(0,\alpha,n)(z,h,t):=\alpha(h)+it\big(\tfrac{n}{N}+\mu(\alpha^{\sharp})\big).$$
For $(\alpha,n)\in\Delta_{\g}$, the corresponding root space is
$$\g_{\CC}^{(\alpha,n)}=e_n\otimes\kk_{\CC}^{(\alpha,n)}.$$
The root $(\alpha,n)$ is compact if and only if $\alpha\neq 0$. Hence
$$(\Delta_{\g})_c=\bigcup_{n=0}^{N-1}{\Delta_n\times(n+N\ZZ)}\subseteq \{0\}\times i(\ttt^{\varphi})^*\times\RR.$$

Given $n\in\ZZ$ and $x\in \kk_{\CC}^{(\alpha,n)}$ with $[x,x^*]=\check{\alpha}$ ($\alpha\in\Delta_n$), we deduce from (\ref{eqn:Dmu}) that 
$$\big\langle D_{\mu}(e_n\otimes x),-(e_{-n}\otimes x^*)^*\big\rangle=\big\langle i\big(\tfrac{n}{N}+\mu(\alpha^{\sharp})\big)e_n\otimes x,-e_n\otimes x\big\rangle =-i\big(\tfrac{n}{N}+\mu(\alpha^{\sharp})\big)\langle x,x\rangle.$$
Since $\langle x,x\rangle=2/(\alpha,\alpha)$, the element $e_n\otimes x\in\g_{\CC}^{(\alpha,n)}$ thus satisfies 
$$[e_n\otimes x,(e_{n}\otimes x)^*]=[e_n\otimes x,e_{-n}\otimes x^*]=\Big(\frac{-2i\big(\tfrac{n}{N}+\mu(\alpha^{\sharp})\big)}{(\alpha,\alpha)},\check{\alpha},0\Big).$$
As $(\alpha,n)$ has value $2$ on this element, we deduce that the coroot associated to $(\alpha,n)$ is
\begin{equation}\label{eqn:coroot}
(\alpha,n)^{\vee}=\Big(\frac{-2i\big(\tfrac{n}{N}+\mu(\alpha^{\sharp})\big)}{(\alpha,\alpha)},\check{\alpha},0\Big).
\end{equation}

\subsection{Locally affine root systems}\label{subsection:LARS}
We define on $\sppan_{\QQ}(\Delta_{\g})_c$ the positive semidefinite bilinear form $(\cdot,\cdot)$ by
$$\big((\alpha,m),(\beta,n)\big):=(\alpha,\beta)\quad\textrm{for all $m,n\in\ZZ$, $\alpha\in\Delta_m$ and $\beta\in\Delta_n$.}$$
Then the triple  $(\sppan_{\QQ}(\Delta_{\g})_c,(\Delta_{\g})_c,(\cdot,\cdot))$ is an irreducible reduced locally affine root system in the sense of \cite[Definition~2.4]{Ne09} (see also \cite{YY08}).

Such root systems have been classified (see \cite[Corollary~13]{YY08}) and those of infinite rank fall into $7$ distinct families of isomorphism classes, parametrised by the types $A_J^{(1)}$, $B_J^{(1)}$, $C_J^{(1)}$, $D_J^{(1)}$, $B_J^{(2)}$, $C_J^{(2)}$ and $BC_J^{(2)}$ for some infinite set $J$. 
Denoting by $\QQ^{(J)}$ the free $\QQ$-vector space with canonical basis $\{\epsilon_j \ | \ j\in J\}$ and scalar product $(\epsilon_j,\epsilon_k)=\delta_{jk}$, these can be realised in $\QQ^{(J)}\times\QQ$ as
\begin{equation*}
\begin{aligned}
X_J^{(1)}&:=X_J\times\ZZ\quad\textrm{for $X\in\{A,B,C,D\}$},\\
B_J^{(2)}&:=(B_J\times 2\ZZ)\cup \big(\{\pm\epsilon_j \ | \ j\in J\}\times (2\ZZ+1)\big),\\
C_J^{(2)}&:=(C_J\times 2\ZZ)\cup \big(D_J\times (2\ZZ+1)\big),\\
BC_J^{(2)}&:=(B_J\times 2\ZZ)\cup \big((B_J\cup C_J)\times (2\ZZ+1)\big),
\end{aligned}
\end{equation*}
where $A_J$, $B_J$, $C_J$ and $D_J$ are as in \S\ref{subsection:LFRS} and where the scalar product on $\QQ^{(J)}\times\QQ$ is given by $\big((\alpha,t),(\alpha',t')\big):=(\alpha,\alpha')$.

If $\Delta=\Delta(\kk,\ttt)$ has type $X_J$ for some $X\in\{A,B,C,D\}$ (see \S\ref{subsection:LFRS}), then the root system of type $X_J^{(1)}$ is obtained as the set of compact roots $$(\Delta_{\g})_c=\Delta_0\times\ZZ=\Delta\times\ZZ$$ of the untwisted doubly extended loop algebra $\g=\widehat{\LLL}(\kk)$ ($\varphi=\id_{\kk}$). 
The root system of type $X_J^{(2)}$ for $X\in\{B,C,BC\}$ can similarly be obtained as the set of compact roots 
$$(\Delta_{\g})_c=(\Delta_0\times 2\ZZ)\cup (\Delta_1\times(1+2\ZZ))$$
of a twisted doubly extended loop algebra $\g=\widehat{\LLL}_{\varphi}(\kk)$, for some suitable choice of a simple Hilbert--Lie algebra $\kk=\kk_X$ and of an automorphism $\varphi=\varphi_X\in\Aut(\kk)$ of order $2$. The three involutive automorphisms $\varphi_X$, $X\in\{B,C,BC\}$, are described in \cite[\S 2.2]{Hloopgroups} and are called \emph{standard} 
(see also Section~\ref{section:root_data}). We will also call the $7$ Lie algebras $\g$ described above \emph{standard affinisations} of the corresponding Hilbert--Lie algebra $\kk$.
We will describe these $7$ standard affinisations and the corresponding root data in more 
detail in Section~\ref{section:root_data} below.

\subsection{Weyl group}\label{subsection:WG}
For each $(\alpha,n)\in (\Delta_{\g})_c$, the reflection $r_{(\alpha,n)}\in\GL(\ttt_{\g}^{\varphi})$ is given by 
\begin{equation}
\begin{aligned}
r_{(\alpha,n)}(z,h,t)&=(z,h,t)-(\alpha,n)(z,h,t)\cdot (\alpha,n)^{\vee}\\
&=(z,h,t)-\big(\alpha(h)+it\big(\tfrac{n}{N}+\mu(\alpha^{\sharp})\big)\big)\cdot \Big(\frac{-2i(\tfrac{n}{N}+\mu(\alpha^{\sharp}))}{(\alpha,\alpha)},\check{\alpha},0\Big).\label{eqn:ralphan}
\end{aligned}
\end{equation}
In particular, 
\begin{equation*}
r_{(\alpha,0)}(z,h,t)=(z,h,t)-\big(\alpha(h)+it\mu(\alpha^{\sharp})\big)\cdot \big(-i\mu(\check{\alpha}),\check{\alpha},0\big).
\end{equation*}
Denote by 
$$\widehat{\WW}_{\mu}:=\WW(\g,\ttt_{\g}^{\varphi})=\big\langle r_{(\alpha,n)} \ | \ n\in\ZZ, \ \alpha\in\Delta_n\big\rangle \subseteq \GL(\ttt_{\g}^{\varphi})$$
the Weyl group of $(\g,\ttt_{\g}^{\varphi})$, and by $\WW_{\mu}$ the subgroup of $\widehat{\WW}_{\mu}$ generated by the reflections $r_{(\alpha,0)}$ for $\alpha\in\Delta_0$. Note that $\widehat{\WW}_{\mu}$ preserves the invariant bilinear form $\kappa\co\g\times\g\to\RR$.

For each $x\in\ttt^{\varphi}$, define the automorphism $\tau_x=\tau(x)\in\GL(\ttt_{\g}^{\varphi})$ by
\begin{equation*}
\tau_x(z,h,t)=\Big(z-\langle h,x\rangle-\frac{t\langle x,x\rangle}{2},h+tx,t\Big).
\end{equation*}
Then $\tau_{x_1}\tau_{x_2}=\tau_{x_1+x_2}$ for all $x_1,x_2\in \ttt^{\varphi}$. Moreover, defining for each $\alpha\in\Delta_{\varphi}$ and $n\in\ZZ$ the reflection $r_{(\alpha,n)}\in\GL(\ttt_{\g}^{\varphi})$ by the formula (\ref{eqn:ralphan}) even if $\alpha\notin \Delta_n$, one can check that
$r_{(\alpha,0)}r_{(\alpha,n)}=\tau_{in\check{\alpha}/N}$ (cf. \cite[\S 3.4]{convexhull}).

Assume now that for each $\alpha\in\Delta_{\varphi}$ there exists some $\beta\in\Delta_0$ such that $r_{(\alpha,0)}=r_{(\beta,0)}$. This is for instance the case if $(\Delta_{\g})_c$ is one of the $7$ locally affine root systems from \S\ref{subsection:LARS}.
Denoting by $\TTT_{\varphi}$ the abelian subgroup of $\ttt^{\varphi}$ generated by $\{in\check{\alpha}/N \ | \ n\in\ZZ,\ \alpha\in\Delta_n\}$, we deduce the following semi-direct decomposition of $\widehat{\WW}_{\mu}$ inside $\GL(\ttt_{\g}^{\varphi})$:
$$\widehat{\WW}_{\mu}=\tau(\TTT_{\varphi})\rtimes \WW_{\mu}.$$

We will describe in Section~\ref{section:COWG} an explicit isomorphism between $\widehat{\WW}_{\mu}$ and $\widehat{\WW}_{0}$.

\section{Isomorphisms of twisted loop algebras}
In this section, we fix some simple Hilbert--Lie algebra $\kk=\uu_2(\HH_{\KK})$ for some infinite-dimensional Hilbert space $\HH_{\KK}$ over $\KK\in\{\RR,\CC,\HHH\}$, as well as some automorphism $\varphi\in\Aut(\kk)$ of finite order.

\begin{lemma}\label{lemma:untwisting}
Let $\psi\in\Aut(\kk)$ be of finite order, and assume that there exists a smooth one-parameter group $(\phi_t)_{t\in\RR}$ of automorphisms of $\kk$ commuting with $\psi$ such that $\phi_1\psi=\varphi$.
Then the map $$\Phi\co \LLL_{\varphi}(\kk)\to \LLL_{\psi}(\kk):\xi\mapsto \Phi(\xi)(t):=\phi_{t/2\pi}(\xi(t))$$ is an isomorphism of Lie algebras.
\end{lemma}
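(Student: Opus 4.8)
The plan is to verify directly that $\Phi$ is a well-defined linear map into $\LLL_{\psi}(\kk)$ that respects the Lie bracket and is bijective. The map is manifestly linear in $\xi$, so the content is in three checks: (1) $\Phi(\xi)$ lands in $\LLL_{\psi}(\kk)$, i.e. it is smooth and satisfies the correct quasi-periodicity for $\psi$; (2) $\Phi$ is a Lie algebra homomorphism; and (3) $\Phi$ is invertible. The key structural input is the relation $\phi_1\psi=\varphi$ together with the fact that $(\phi_t)_{t\in\RR}$ is a one-parameter group of \emph{automorphisms} commuting with $\psi$.

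**Smoothness and quasi-periodicity.** First I would observe that $\Phi(\xi)(t)=\phi_{t/2\pi}(\xi(t))$ is smooth in $t$, since $(\phi_t)$ is a smooth one-parameter group and $\xi$ is smooth. The crucial step is checking the periodicity condition $\Phi(\xi)(t+2\pi)=\psi\inv(\Phi(\xi)(t))$. Using $\xi(t+2\pi)=\varphi\inv(\xi(t))$ and the additivity $\phi_{(t+2\pi)/2\pi}=\phi_{t/2\pi}\circ\phi_1$, I compute
\begin{align*}
\Phi(\xi)(t+2\pi)
&=\phi_{(t+2\pi)/2\pi}(\xi(t+2\pi))
=\phi_{t/2\pi}\big(\phi_1(\varphi\inv(\xi(t)))\big).
\end{align*}
Now $\phi_1=\varphi\psi\inv$, and since $\phi_1$ commutes with $\psi$ it commutes with $\psi\inv$, while $\varphi=\phi_1\psi$ means $\varphi\inv=\psi\inv\phi_1\inv$; combining these, $\phi_1\varphi\inv=\phi_1\psi\inv\phi_1\inv=\psi\inv$ (using that $\phi_1$ commutes with $\psi\inv$). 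Hence $\phi_1(\varphi\inv(\xi(t)))=\psi\inv(\xi(t))$, and since each $\phi_t$ commutes with $\psi$ we may pull $\psi\inv$ out to get $\Phi(\xi)(t+2\pi)=\psi\inv(\phi_{t/2\pi}(\xi(t)))=\psi\inv(\Phi(\xi)(t))$, as required.

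**Homomorphism and inverse.** That $\Phi$ is a Lie algebra homomorphism follows pointwise: since each $\phi_{t/2\pi}\in\Aut(\kk)$,
$$\Phi([\xi,\eta])(t)=\phi_{t/2\pi}([\xi(t),\eta(t)])=[\phi_{t/2\pi}(\xi(t)),\phi_{t/2\pi}(\eta(t))]=[\Phi(\xi),\Phi(\eta)](t).$$
For bijectivity, the natural candidate for the inverse is $\Psi\co\LLL_{\psi}(\kk)\to\LLL_{\varphi}(\kk)$, $\Psi(\eta)(t):=\phi_{-t/2\pi}(\eta(t))$, which by the symmetric argument (using $\psi=\phi_1\inv\varphi$, or equivalently running the roles of $\varphi$ and $\psi$) maps into $\LLL_{\varphi}(\kk)$; the identities $\phi_{-t/2\pi}\phi_{t/2\pi}=\id$ show $\Psi\circ\Phi=\id$ and $\Phi\circ\Psi=\id$.

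**Main obstacle.** The only genuinely delicate point is the commutation bookkeeping in the periodicity check — verifying $\phi_1\varphi\inv=\psi\inv$ from the hypotheses $\phi_1\psi=\varphi$ and $[\phi_1,\psi]=e$. Everything else (smoothness, the pointwise homomorphism property, the inverse) is routine once this algebraic relation is in place. It is worth recording that the commutation of $(\phi_t)$ with $\psi$ is exactly what is needed to move $\psi\inv$ across $\phi_{t/2\pi}$, so this hypothesis is used twice and is essential.
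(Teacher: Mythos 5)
Your proof is correct and takes essentially the same route as the paper's: the same quasi-periodicity computation (the paper derives the key relation by writing $\phi_1=\varphi\psi\inv$ and using that $\varphi$ commutes with $\psi$, whereas you derive $\phi_1\varphi\inv=\psi\inv$ from the commutation of $\phi_1$ with $\psi$ --- an inconsequential difference in bookkeeping), the same pointwise homomorphism check, and the same explicit inverse $\eta\mapsto\phi_{-t/2\pi}(\eta(t))$ justified by symmetry.
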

\begin{proof}
Let $\xi\in\LLL_{\varphi}(\kk)$. Then $\Phi(\xi)\in C^{\infty}(\RR,\kk)$ and for all $t\in\RR$,
$$\Phi(\xi)(t+2\pi)=\phi_{\frac{t}{2\pi}+1}(\xi(t+2\pi))=\phi_{t/2\pi}\varphi\psi\inv(\varphi\inv(\xi(t)))=\psi\inv\phi_{t/2\pi}(\xi(t))=\psi\inv(\Phi(\xi)(t)).$$
Hence $\Phi(\LLL_{\varphi}(\kk))\subseteq \LLL_{\psi}(\kk)$. Moreover,
$$[\Phi(\xi),\Phi(\eta)](t)=[\varphi_{t/2\pi}(\xi(t)),\varphi_{t/2\pi}(\eta(t))]=\varphi_{t/2\pi}([\xi(t),\eta(t)])=\Phi([\xi,\eta])(t)$$
for all $\xi,\eta\in\LLL_{\varphi}(\kk)$ and $t\in\RR$, so that $\Phi$ is indeed a Lie algebra morphism.
Similarly, the map $\Phi\inv\co \LLL_{\psi}(\kk)\to \LLL_{\varphi}(\kk):\xi\mapsto \Phi\inv(\xi)(t):=\phi_{-t/2\pi}(\xi(t))$ is a well-defined Lie algebra morphism. Since it is an inverse for $\Phi$, the lemma follows.
\end{proof}

\begin{prop}\label{prop:untwisting_extension2}
Let $\psi\in\Aut(\kk)$ be of finite order, and let $(U_t)_{t\in\RR}$ be a smooth one-parameter group of unitary operators $U_t\in U(\HH_{\KK})$ such that the corresponding automorphisms $\phi_t=\pi_{U_t}$ of $\kk$ commute with $\psi$ and such that $\phi_1\psi=\varphi$. Let $\ttt$ be a maximal abelian subalgebra of $\kk$, and assume that $\ttt^{\varphi}=\ttt^{\psi}=:\ttt_0$. Assume moreover that $\ttt_0$ is maximal abelian in both $\kk^{\varphi}$ and $\kk^{\psi}$ and that $\ttt_0\subseteq\ttt^{\phi_t}$ for all $t\in\RR$.

Let $\mu,\nu\in i\ttt_0^*$ and assume that the skew symmetric operator $\Lambda_{\mu}:=i\mu^{\sharp}\in \uu(\HH_{\KK})$ satisfying $\langle \Lambda_{\mu},h\rangle = i\mu(h)$ for all $h\in i\ttt_0$ is bounded. Assume moreover that $$\frac{d}{dt}U_{t/2\pi}=-\Lambda_{\mu}U_{t/2\pi}=-U_{t/2\pi}\Lambda_{\mu}.$$ Then the following holds:
\begin{enumerate}
\item[(i)]
The isomorphism $\Phi\co \LLL_{\varphi}(\kk)\to \LLL_{\psi}(\kk)$ provided by Lemma~\ref{lemma:untwisting} extends to an isomorphism $$\widehat{\Phi}\co (\RR\oplus_{\omega_{D_{\nu}}}\LLL_{\varphi}(\kk))\rtimes_{\widetilde{D}_{\nu}}\RR\to (\RR\oplus_{\omega_{D_{\mu+\nu}}}\LLL_{\psi}(\kk))\rtimes_{\widetilde{D}_{\mu+\nu}}\RR$$ fixing 
$\ttt_0^e:=\RR\oplus\ttt_0\oplus\RR$ pointwise.
\item[(ii)]
$\widehat{\Phi}$ induces an isomorphism of locally affine root systems given by
$$\pi\co \Delta(\widehat{\LLL}_{\varphi}^{\nu}(\kk),\ttt_0^e)_c\to \Delta(\widehat{\LLL}_{\psi}^{\mu+\nu}(\kk),\ttt_0^e)_c:(\alpha,n)\mapsto \big(\alpha,N_{\psi}\cdot(\tfrac{n}{N_{\varphi}}-\mu(\alpha^{\sharp}))\big),$$
where $N_{\varphi}$ and $N_{\psi}$ are the respective orders of $\varphi$ and $\psi$.
\item[(iii)]
The Weyl groups $\WW(\widehat{\LLL}_{\varphi}^{\nu}(\kk),\ttt_0^e),\WW(\widehat{\LLL}_{\psi}^{\mu+\nu}(\kk),\ttt_0^e)\subseteq \GL(\ttt_0^e)$ coincide.
\end{enumerate}
\end{prop}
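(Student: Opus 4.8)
The plan is to verify that the map $\widehat{\Phi}(z_1,\xi,z_2):=(z_1,\Phi(\xi),z_2)$, with $\Phi$ the loop-algebra isomorphism of Lemma~\ref{lemma:untwisting}, has all the asserted properties. For (i) everything reduces to two facts about $\Phi$: that it is isometric for $\langle\cdot,\cdot\rangle$, and that it intertwines the two diagonal derivations in the precise form $\Phi\circ D_{\nu}=D_{\mu+\nu}\circ\Phi$. Isometry is immediate, since each $\phi_{t/2\pi}=\pi_{U_{t/2\pi}}$ is an isometric automorphism of $\kk$ (§\ref{subsection:AG}) and the form on $\LLL$ is the average of the pointwise form. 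Granting the intertwining relation, the cocycle is preserved because
\[
\omega_{D_{\mu+\nu}}(\Phi x,\Phi y)=\langle D_{\mu+\nu}\Phi x,\Phi y\rangle=\langle \Phi D_{\nu}x,\Phi y\rangle=\langle D_{\nu}x,y\rangle=\omega_{D_{\nu}}(x,y),
\]
and comparing the two double-extension brackets term by term (the $[x,y]$-term by Lemma~\ref{lemma:untwisting}, the $t_iD x_j$-terms by the intertwining relation, and the central term by the cocycle identity) shows $\widehat{\Phi}$ is a Lie algebra homomorphism; it is bijective since $\Phi$ is and the outer coordinates are untouched. That $\widehat{\Phi}$ fixes $\ttt_0^e$ pointwise is clear: a constant loop $h\in\ttt_0$ lies in both $\LLL_{\varphi}(\kk)$ and $\LLL_{\psi}(\kk)$ (as $\ttt_0=\ttt^{\varphi}=\ttt^{\psi}$) and $\Phi(h)(t)=\phi_{t/2\pi}(h)=h$ by the hypothesis $\ttt_0\subseteq\ttt^{\phi_t}$.

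The heart of the argument, and the step I expect to be the main obstacle, is the intertwining relation, for which I first make $\phi_t$ explicit. Integrating $\tfrac{d}{dt}U_{t/2\pi}=-\Lambda_{\mu}U_{t/2\pi}$ with $U_0=\id$ gives $U_{t/2\pi}=\exp(-t\Lambda_{\mu})$, hence $\phi_{t/2\pi}=\Ad(U_{t/2\pi})=\exp(-t\ad\Lambda_{\mu})$. Since $\Lambda_{\mu}=i\mu^{\sharp}$ is diagonal, $\ad(\Lambda_{\mu})$ acts on $\kk_{\CC}^{\alpha}$ by $i\alpha(\mu^{\sharp})=i\mu(\alpha^{\sharp})$, so $\ad(\Lambda_{\mu})=\overline{D}_{\mu}$ on $\kk_{\CC}$; in particular $\phi_{t/2\pi}$ commutes with both $\overline{D}_{\mu}$ and $\overline{D}_{\nu}$, as it preserves each $\ttt_0$-weight space on which these are scalar. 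Differentiating $\Phi(\xi)(t)=U_{t/2\pi}\xi(t)U_{t/2\pi}\inv$ by the product rule and using $\tfrac{d}{dt}U_{t/2\pi}=-\Lambda_{\mu}U_{t/2\pi}=-U_{t/2\pi}\Lambda_{\mu}$ yields
\[
(\Phi\xi)'(t)=\phi_{t/2\pi}\big(\xi'(t)-[\Lambda_{\mu},\xi(t)]\big)=\Phi(D_0\xi)(t)-\overline{D}_{\mu}(\Phi\xi)(t),
\]
so that $\Phi\circ D_0=(D_0+\overline{D}_{\mu})\circ\Phi=D_{\mu}\circ\Phi$. Combining with $\Phi\circ\overline{D}_{\nu}=\overline{D}_{\nu}\circ\Phi$ (from the commutation just noted) gives $\Phi\circ D_{\nu}=D_{\mu+\nu}\circ\Phi$, as required. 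The delicate point is exactly the emergence of the correction term $-\overline{D}_{\mu}$ from differentiating the conjugating family $U_{t/2\pi}$, which is what converts a slant-$\nu$ structure into a slant-$(\mu+\nu)$ one.

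For (ii) I evaluate $\widehat{\Phi}$ on root vectors. For $\alpha\in\Delta_{\varphi}$ and $x\in\kk_{\CC}^{(\alpha,n)}$ the computation above gives $\phi_{t/2\pi}(x)=e^{-it\mu(\alpha^{\sharp})}x$, whence
\[
\Phi(e_n\otimes x)(t)=e^{int/N_{\varphi}}e^{-it\mu(\alpha^{\sharp})}x=e^{imt/N_{\psi}}x,\qquad m:=N_{\psi}\big(\tfrac{n}{N_{\varphi}}-\mu(\alpha^{\sharp})\big).
\]
Using $\varphi=\phi_1\psi=\psi\phi_1$ with $\phi_1$ acting on $\kk_{\CC}^{\alpha}$ by $e^{-2\pi i\mu(\alpha^{\sharp})}$, one checks that $\psi\inv(x)=e^{2\pi im/N_{\psi}}x$, so $x$ is a $\psi$-weight vector of index $m\in\ZZ$ and $\widehat{\Phi}$ carries the root space of $(\alpha,n)$ onto that of $\pi(\alpha,n)=(\alpha,m)$. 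Hence $\pi$ is the asserted bijection $\Delta(\widehat{\LLL}_{\varphi}^{\nu}(\kk),\ttt_0^e)_c\to\Delta(\widehat{\LLL}_{\psi}^{\mu+\nu}(\kk),\ttt_0^e)_c$, and since the locally affine form $((\alpha,m),(\beta,n))=(\alpha,\beta)$ depends only on the first coordinate, $\pi$ preserves it and is an isomorphism of locally affine root systems.

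For (iii) I use that $\widehat{\Phi}$ is an isomorphism fixing $\ttt_0^e$ pointwise and mapping the $(\alpha,n)$-root space to the $\pi(\alpha,n)$-root space: it follows at once that $(\alpha,n)$ and $\pi(\alpha,n)$ coincide as functionals on $\ttt_0^e$. Their coroots agree as vectors of $\ttt_0^e$ as well, which I read off from (\ref{eqn:coroot}) using the identity $\tfrac{m}{N_{\psi}}+(\mu+\nu)(\alpha^{\sharp})=\tfrac{n}{N_{\varphi}}+\nu(\alpha^{\sharp})$ (so the first coordinates of the two coroots match, the others being $(\check{\alpha},0)$ in both). Since a reflection $r_{(\alpha,n)}(v)=v-(\alpha,n)(v)\cdot(\alpha,n)^{\vee}$ is determined by its root and coroot, we get $r_{(\alpha,n)}=r_{\pi(\alpha,n)}$ in $\GL(\ttt_0^e)$ for every compact root. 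As $\pi$ is a bijection on compact roots, the two generating sets of reflections coincide, and therefore so do the Weyl groups.
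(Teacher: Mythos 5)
Your proof is correct and follows essentially the same route as the paper's: the heart in both cases is the intertwining relation $\Phi\circ D_{\nu}=D_{\mu+\nu}\circ\Phi$, obtained from $\ad(\Lambda_{\mu})=\overline{D}_{\mu}$ and differentiation of the conjugating family $U_{t/2\pi}$, followed by matching roots and coroots through the identity $\tfrac{m}{N_{\psi}}+(\mu+\nu)(\alpha^{\sharp})=\tfrac{n}{N_{\varphi}}+\nu(\alpha^{\sharp})$ to conclude that the reflections, hence the Weyl groups, coincide. The only differences are cosmetic: you check the cocycle preservation and the integrality of the new index $m$ explicitly, and you compute the root correspondence directly on root vectors, whereas the paper leaves the cocycle step implicit and derives the correspondence abstractly from the fact that $\widehat{\Phi}$ fixes $\ttt_0^e$ pointwise.
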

\begin{proof}
Write for short $\g_{\varphi}:=\widehat{\LLL}_{\varphi}^{\nu}(\kk)$ and $\g_{\psi}:=\widehat{\LLL}_{\psi}^{\mu+\nu}(\kk)$, as well as $\Delta_{\g_{\varphi}}$ and $\Delta_{\g_{\psi}}$ for the corresponding root systems with respect to the Cartan subalgebra $\ttt_0^e$.
Note that $\Delta_{\varphi}=\Delta(\kk,\ttt_0)=\Delta_{\psi}$. For each $n\in\ZZ$ let $\Delta_n^{\varphi}$ and $\Delta_n^{\psi}$ respectively denote the set of nonzero $\ttt_0$-weights on $$\kk_{\CC}^n(\varphi):=\{x\in\kk_{\CC} \ | \ \varphi\inv(x)=e^{2in\pi/N_{\varphi}}x\}\quad\textrm{and}\quad \kk_{\CC}^n(\psi):=\{x\in\kk_{\CC} \ | \ \psi\inv(x)=e^{2in\pi/N_{\psi}}x\},$$ as in \S\ref{subsection:FOA}. Thus
$$(\Delta_{\g_{\varphi}})_c=\bigcup_{0\leq n< N_{\varphi}}{\Delta_n^{\varphi}\times (n+N_{\varphi}\ZZ)}\quad\textrm{and}\quad (\Delta_{\g_{\psi}})_c=\bigcup_{0\leq n< N_{\psi}}{\Delta_n^{\psi}\times (n+N_{\psi}\ZZ)}$$
(cf. \S\ref{subsection:DE}).

We extend the isomorphism $\Phi\co \LLL_{\varphi}(\kk)\to \LLL_{\psi}(\kk)$ provided by Lemma~\ref{lemma:untwisting} to a bijective linear map $\widehat{\Phi}\co\g_{\varphi}\to \g_{\psi}$ by setting $\widehat{\Phi}(1,0,0):=(1,0,0)$ and $\widehat{\Phi}(0,0,1):=(0,0,1)$.

We first claim that $\overline{D}_{\mu}=\ad(\Lambda_{\mu})\in\der(\kk,\langle\cdot,\cdot\rangle)$. Indeed, let $\{E_j \ |  j\in J\}$ be some orthonormal basis of the real Hilbert space $i\ttt_0$ whose $\RR$-span contains all $\alpha^{\sharp}$, $\alpha\in \Delta_{\psi}$ (cf. \S\ref{subsection:LFRS} and \S\ref{subsection:FOA}). Write $$\Lambda_{\mu}=\sum_{j\in J}{\mu_jE_j}\quad\textrm{where $\mu_j:=\langle \Lambda_{\mu},E_j\rangle=i\mu(E_j)$ for all $j\in J$.}$$
Then for any $x$ be in the $\ttt_0$-weight space of $\kk_{\CC}$ corresponding to $\alpha\in\Delta_{\psi}\cup\{0\}$, we have
$$\ad(\Lambda_{\mu})(x)=\sum_{j\in J}{\mu_j[E_j,x]}=\sum_{j\in J}{\mu_j\alpha(E_j)x}=\sum_{j\in J}{\mu_j\langle E_j,\alpha^{\sharp}\rangle x}=\langle \Lambda_{\mu},\alpha^{\sharp}\rangle x=i\mu(\alpha^{\sharp})x=\overline{D}_{\mu}x,$$
as desired. Note that the above sums are finite because $\alpha^{\sharp}$ is a (finite) linear combination of the $E_j$.

We then obtain for all $\xi\in\LLL_{\varphi}(\kk)$ that
\begin{equation*}
\begin{aligned}
(\Phi(\xi))'(t)=\frac{d}{dt}(U_{t/2\pi}\xi(t)U_{-t/2\pi})=-\Lambda_{\mu}\Phi(\xi)(t)+\Phi(\xi')(t)+\Phi(\xi)(t)\Lambda_{\mu}=\Phi(\xi')(t)-\overline{D}_{\mu}(\Phi(\xi))(t).
\end{aligned}
\end{equation*}
On the other hand, since $\phi_t$ ($t\in\RR$) fixes $\ttt_0$ pointwise, it preserves all $\ttt_0$-weight spaces in $\kk_{\CC}$, and thus commutes with $\overline{D}_{\nu}$. 
Hence
\begin{equation*}
\begin{aligned}
[\widehat{\Phi}((0,0,1)),\widehat{\Phi}(\xi)](t)&=[(0,0,1),\Phi(\xi)](t)=D_{\mu+\nu}(\Phi(\xi))(t)\\
&=(\Phi(\xi))'(t)+\overline{D}_{\mu}(\Phi(\xi))(t)+\overline{D}_{\nu}(\Phi(\xi))(t)=\Phi(\xi')(t)+\overline{D}_{\nu}(\phi_{t/2\pi}(\xi(t)))\\
&=\Phi(\xi')(t)+\phi_{t/2\pi}(\overline{D}_{\nu}(\xi(t)))=\widehat{\Phi}(D_{\nu}(\xi))(t)=\widehat{\Phi}([(0,0,1),\xi])(t),
\end{aligned}
\end{equation*}
so that $\widehat{\Phi}$ is indeed a Lie algebra isomorphism. Moreover, the restriction of $\widehat{\Phi}$ to $\ttt_0^e$ is the identity because $\phi_t$ fixes $\ttt_0$ pointwise. This proves (i).

As $\widehat{\Phi}$ fixes $\ttt_0^e$, it preserves the root space decompositions of $\g_{\varphi}$ and $\g_{\psi}$ with respect to $\ttt_0^e$ and hence induces an isomorphism of locally affine root systems
$$\pi\co (\Delta_{\g_{\varphi}})_c\to (\Delta_{\g_{\psi}})_c:(\alpha,n)\mapsto (\alpha,\overline{\pi}_{\alpha}(n))$$
such that
$$(\alpha,n)(z,h,t)=\alpha(h)+it\big(\tfrac{n}{N_{\varphi}}+\nu(\alpha^{\sharp})\big)=\alpha(h)+it\big(\tfrac{\overline{\pi}_{\alpha}(n)}{N_{\psi}}+(\mu+\nu)(\alpha^{\sharp})\big)=(\alpha,\overline{\pi}_{\alpha}(n))(z,h,t)$$
for all $(z,h,t)\in\ttt_0^e$. This yields in particular
\begin{equation*}
\overline{\pi}_{\alpha}(n)=N_{\psi}\cdot(\tfrac{n}{N_{\varphi}}-\mu(\alpha^{\sharp})),
\end{equation*}
so that (ii) holds.

Since moreover
$$(\alpha,n)^{\vee}=\bigg(\frac{-2i\big(\tfrac{n}{N_{\varphi}}+\nu(\alpha^{\sharp})\big)}{(\alpha,\alpha)},\check{\alpha},0\bigg)=\bigg(\frac{-2i\big(\tfrac{\overline{\pi}_{\alpha}(n)}{N_{\psi}}+(\mu+\nu)(\alpha^{\sharp})\big)}{(\alpha,\alpha)},\check{\alpha},0\bigg)=(\alpha,\overline{\pi}_{\alpha}(n))^{\vee}$$
for all $(\alpha,n)\in (\Delta_{\g_{\varphi}})_c$ by (\ref{eqn:coroot}), we deduce that
$$r_{(\alpha,n)}=r_{(\alpha,\overline{\pi}_{\alpha}(n))}\in\GL(\ttt_0^e) \quad\textrm{for all $(\alpha,n)\in (\Delta_{\g_{\varphi}})_c$.}$$
Hence $\WW(\g_{\varphi},\ttt_0^e)=\WW(\g_{\psi},\ttt_0^e)$, proving (iii). 
\end{proof}

\begin{remark}\label{remark:conventions}
Recall from \S\ref{subsection:LA} that we defined the $\varphi$-twisted loop algebra $\LLL_{\varphi}(\kk)$ as a subspace of the $2\pi N_{\varphi}$-periodic functions of $C^{\infty}(\RR,\kk)$, where $N_{\varphi}$ is the order of $\varphi$. Another convention which one finds in the literature is to consider $2\pi$-periodic functions instead. We now explain how the characteristic 
data of these two definitions are related.

For $N\in\NN$, we set 
$$\LLL_{\varphi,N}(\kk):=\big\{\xi\in C^{\infty}(\RR,\kk) \ | \ \xi(t+\tfrac{2\pi}{N})=\varphi\inv(\xi(t))\big\}.$$
For $N=1$, this is the $\varphi$-twisted loop algebra $\LLL_{\varphi}(\kk)$ that we consider in this paper; the other convention which we alluded to above is to take $N=N_{\varphi}$.

Let $\ttt_0=\ttt^{\varphi}$ be maximal abelian in $\kk^{\varphi}$ and let $\nu\in i\ttt_0^*$. Define the skew-symmetric derivation $D_{\nu}$ of $\LLL_{\varphi,N}(\kk)$ as in \S\ref{subsection:D}. Thus $$D_{\nu}(e^{intN/N_{\varphi}}\otimes x)=i\big(\tfrac{nN}{N_{\varphi}}+\nu(\alpha^{\sharp})\big)(e^{intN/N_{\varphi}}\otimes x)$$
for all $\alpha\in\Delta_{\varphi}$ and $x\in\kk_{\CC}^{(\alpha,n)}$. Denote by 
$$\widehat{\LLL}_{\varphi,N}^{\nu}(\kk):=(\RR\oplus_{\omega_{D_{\nu}}}\LLL_{\varphi,N}(\kk))\rtimes_{\widetilde{D}_{\nu}}\RR$$
the double extension of $\LLL_{\varphi,N}(\kk)$ corresponding to $D_{\nu}$ as in \S\ref{subsection:DE}, with Cartan subalgebra $\ttt_0^e:=\RR\oplus\ttt_0\oplus\RR$. 
Then 
\begin{equation}
\Phi\co\widehat{\LLL}^{\nu}_{\varphi,1}(\kk)\to \widehat{\LLL}^{N\nu}_{\varphi,N}(\kk): \xi(t)\mapsto \xi(Nt), \quad (1,0,0)\mapsto (N,0,0), \quad (0,0,1)\mapsto (0,0,\tfrac{1}{N})
\end{equation}
is an isomorphism. 
Set
$$\bc:=(i,0,0)\in i\ttt_0^e\quad\textrm{and}\quad \bd:=(0,0,-i)\in i\ttt_0^e.$$ 
For a weight $\la=[\la_c,\la^0,\la_d]\in i(\ttt_0^e)^*$ with respect to $\ttt_0^e\subseteq \widehat{\LLL}^{\nu}_{\varphi,1}(\kk)$, where $$\la^0:=\la|_{(\ttt_0)_{\CC}}\in i\ttt_0^*,\quad \la_c:=\la(\bc)\in\RR\quad\textrm{and}\quad\la_d:=\la(\bd)\in\RR,$$
the corresponding weight with respect to $\ttt_0^e\subseteq \widehat{\LLL}^{\nu}_{\varphi,N}(\kk)$ is then given by 
\begin{equation}\label{eqn:laN}
\la\circ\Phi\inv=[\tfrac{1}{N}\la_c,\la^0,N\la_d].
\end{equation}
Similarly, for $\chi=[\chi_c,\chi^0,\chi_d]:=\chi_c\bc+\chi^0+\chi_d\bd\in i\ttt_0^e\subseteq \widehat{\LLL}^{\nu}_{\varphi,1}(\kk)$, where $\chi^0\in i\ttt_0$ and $\chi_c,\chi_d\in\RR$, we have
\begin{equation}\label{eqn:chiN}
\Phi(\chi)=[N\chi_c,\chi^0,\tfrac{1}{N}\chi_d].
\end{equation}
Using the identities (\ref{eqn:laN}) and (\ref{eqn:chiN}), it is then easy to state 
the positive energy condition for highest weight representations of $\widehat{\LLL}^{N\nu}_{\varphi,N}(\kk)$ in terms of the corresponding condition for $\widehat{\LLL}^{\nu}_{\varphi,1}(\kk)$ (see \S\ref{subsection:ATPER} for more detail about the positive energy condition).
\end{remark}

\section{The structure of finite order antiunitary operators}
Given a complex Hilbert space $\HH$ with orthonormal basis $\mathcal B=\{e_j \ | \ j\in J\}$, we denote by $\sigma_{\mathcal B}$ the complex conjugation on $\HH$ with respect to this basis. The following proposition describes the structure of finite order antiunitary operators on $\HH$.

\begin{prop}\label{prop:structure_antiunitary}
Let $\HH$ be a complex Hilbert space, and let $A$ be an antiunitary operator on $\HH$ of finite order. Let $N\in\NN$ be such that $A^{2N}=\id_{\HH}$ and set $\zeta:=e^{i\pi/N}\in\CC$. Then the following holds:
\begin{enumerate}
\item[(i)]
$A^2\in U(\HH)$ and $\HH$ has a decomposition $$\HH=\HH_1\oplus \HH_{-1} \oplus \bigoplus_{0<n<N/2}{(\HH_{\zeta^{2n}}\oplus \HH_{\zeta^{-2n}})}$$
into $A^2$-eigenspaces, where $\HH_{\la}$ denotes the $A^2$-eigenspace corresponding to the eigenvalue $\la$.
\item[(ii)]
There is some $A$-stable subspace $\widetilde{\HH}_1$ of $\HH_1$ with $\dim\widetilde{\HH}_1\leq 1$ such that the following holds: 
\begin{itemize}
\item
If $\dim\widetilde{\HH}_1=1$, there is some unit vector $e_{j_0}\in \widetilde{\HH}_1$ such that $Ae_{j_0}=e_{j_0}$.
\item
For each $n\in\ZZ$ with $0 \leq n\leq N/2$, there exists an orthonormal basis 
$\{e_j^+ ,e_j^-\ | \ j\in J_{\zeta^{2n}}\}$ of 
$\HH_{\zeta^{2n}}+\HH_{\zeta^{-2n}}$ (for $n > 0$) and of 
$\HH_1\cap \widetilde{\HH}_1^{\perp}$ (for $n = 0$) such that $A$ stabilises each plane $\CC e_j^+\oplus\CC e_j^-$, $j\in J_{\zeta^{2n}}$, and has the form
$$\begin{pmatrix}0 & \zeta^n \\ \zeta^{-n} & 0\end{pmatrix}\sigma_{e_j^+,e_j^-}$$
in the basis $\{e_j^+,e_j^-\}$.
\end{itemize}
\end{enumerate}
\end{prop}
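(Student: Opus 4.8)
The plan is to diagonalise the linear part $A^2$ first and then analyse $A$ on each isotypic piece, treating the two self-conjugate eigenvalues $\pm1$ separately from the conjugate pairs $\zeta^{\pm 2n}$.

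For (i) I would begin by noting that $A^2$ is linear, being a composite of two antilinear maps, and that $\langle A^2x,A^2y\rangle=\overline{\langle Ax,Ay\rangle}=\langle x,y\rangle$, so $A^2\in U(\HH)$. Since $(A^2)^N=A^{2N}=\id$, it is a unitary operator annihilated by the separable polynomial $X^N-1$, hence diagonalisable with pairwise orthogonal eigenspaces, and its eigenvalues are $N$-th roots of unity, that is, of the form $\zeta^{2n}$. The structural input is antilinearity: from $A^2x=\la x$ one gets $A^2(Ax)=A(\la x)=\overline{\la}\,Ax$, so $A$ restricts to an antiunitary bijection $\HH_\la\to\HH_{\overline{\la}}$. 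In particular $A$ preserves the eigenspaces $\HH_1$ and $\HH_{-1}$ attached to the real eigenvalues (the latter occurring only when $N$ is even, since $-1=\zeta^N$) and interchanges $\HH_{\zeta^{2n}}$ with $\HH_{\zeta^{-2n}}$ for $0<n<N/2$; this is precisely the asserted orthogonal decomposition.

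For the conjugate pairs $0<n<N/2$ the argument is direct and constitutes the easy part of (ii): I would fix an orthonormal basis $\{e_j^+\mid j\in J_{\zeta^{2n}}\}$ of $\HH_{\zeta^{2n}}$ and set $e_j^-:=\zeta^{n}Ae_j^+\in\HH_{\zeta^{-2n}}$. As $A$ is antiunitary it maps the first basis to an orthonormal system, so $\{e_j^+,e_j^-\}$ is an orthonormal basis of $\HH_{\zeta^{2n}}\oplus\HH_{\zeta^{-2n}}$, each plane $\CC e_j^+\oplus\CC e_j^-$ is $A$-stable, and a one-line computation using $A^2e_j^+=\zeta^{2n}e_j^+$ yields $Ae_j^+=\zeta^{-n}e_j^-$ and $Ae_j^-=\zeta^{n}e_j^+$, i.e.\ the matrix $\bigl(\begin{smallmatrix}0&\zeta^n\\ \zeta^{-n}&0\end{smallmatrix}\bigr)\sigma_{e_j^+,e_j^-}$.

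The substance of (ii) lies in the two self-conjugate eigenspaces. On $\HH_1$ the restriction $B:=A|_{\HH_1}$ is an antiunitary involution, i.e.\ a real structure: decomposing $v=\tfrac12(v+Bv)+\tfrac12(v-Bv)$ exhibits $\HH_1=\HH_1^B\oplus i\HH_1^B$, where the fixed space $\HH_1^B$ is a real Hilbert space with real-valued inner product. I would choose a real orthonormal basis $\{f_k\}$ of $\HH_1^B$ (Zorn/Gram--Schmidt), partition it into pairs -- with at most one vector $e_{j_0}$ left over, which happens exactly when $\dim\HH_1$ is odd and finite -- and replace each pair $\{f_k,f_{k'}\}$ by $e^\pm:=\tfrac1{\sqrt2}(f_k\pm i f_{k'})$; one checks that $\{e^+,e^-\}$ is orthonormal with $Ae^+=e^-$ and $Ae^-=e^+$, giving the $n=0$ form, and that these vectors form an orthonormal basis of $\HH_1\cap\widetilde{\HH}_1^{\perp}$, while the leftover $e_{j_0}$ spans the $A$-stable line $\widetilde{\HH}_1$ with $Ae_{j_0}=e_{j_0}$, so $\dim\widetilde{\HH}_1\le1$. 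On $\HH_{-1}$ (for $N$ even) the restriction satisfies $A^2=-\id$, a quaternionic structure; here Hermitian symmetry together with $A^2=-\id$ forces $\langle v,Av\rangle=0$, so $v,Av$ are orthonormal, the plane they span is $A$-stable, and its orthogonal complement is again $A$-stable, permitting a Zorn-type exhaustion into such planes. Setting $e^+:=v$ and $e^-:=iAv$ reproduces the $n=N/2$ matrix, using $\zeta^{N/2}=i$. The main obstacle is exactly this self-conjugate case: recognising the real (resp.\ quaternionic) structure on $\HH_{\pm1}$, producing the required symplectic-type orthonormal bases, and handling the infinite-dimensional bookkeeping -- the existence of orthonormal bases and the partition into $A$-stable planes via Zorn's lemma, together with the parity argument pinning down $\dim\widetilde{\HH}_1\le1$ -- whereas the conjugate-pair case is a routine transport of a basis by $A$.
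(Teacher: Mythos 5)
Your proposal is correct and follows essentially the same route as the paper: decompose into $A^2$-eigenspaces, use antilinearity to show $A$ maps $\HH_\la$ to $\HH_{\overline\la}$, transport a basis of $\HH_{\zeta^{2n}}$ by $\zeta^n A$ for the conjugate pairs, use the real form $\HH_1^A$ with the $\tfrac{1}{\sqrt2}(f\pm if')$ trick on $\HH_1$, and the orthogonality $\langle v,Av\rangle=0$ plus Zorn's lemma on $\HH_{-1}$ (your $e^-=iAv$ is exactly the paper's $e^-=\zeta^{N/2}Ae^+$). The only cosmetic difference is that you spell out part (i), which the paper declares clear.
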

\begin{proof}
The first statement of the proposition is clear. Since $A$ is antilinear and commutes with $A^2$, we have for any eigenvalue $\la$ of $A^2$ and any $v\in\HH_{\la}$ that 
$$A^2(Av)=A(A^2v)=A(\la v)=\overline{\la}Av,$$
and hence $A.\HH_{\la}=\HH_{\overline{\la}}$.
Thus $A$ stabilises each of the subspaces $\HH_1$, $\HH_{-1}$ and 
$\HH_{\zeta^{2n}}\oplus \HH_{\zeta^{-2n}}$ for $n\in\ZZ$ with $0<n<N/2$.

Since $A$ acts as a conjugation on $\HH_1$, the fixed-point space $\HH_1^A$ is a real form of $\HH_1$. Choose an orthornormal basis $\{f_j^+,f_j^- \ | \ j\in J_{1}\}\cup S_1$ of $\HH_1$ that is contained in $\HH_1^A$, where $S_1=\{e_{j_0}\}$ is a singleton if $\dim\HH_1$ is finite and odd and $S_1=\varnothing$ otherwise. Let also $\widetilde{\HH}_1$ denote the sub-vector space of $\HH_1$ with basis $S_1$. Thus $Af_j^{\pm}=f_j^{\pm}$ for all $j\in J_1$ and $Ae_{j_0}=e_{j_0}$ if $\dim\widetilde{\HH}_1=1$.
For all $j\in J_1$, set $$e_j^{\pm}:=\frac{1}{\sqrt{2}}(f_j^+\pm if_j^-).$$ 
Then $\{e_j^+,e_j^- \ | \ j\in J_{1}\}$ is an orthonormal basis of $\HH_1\cap \widetilde{\HH}_1^{\perp}$, and $A$ has the desired form in each of the bases $\{e_j^+,e_j^-\}$, $j\in J_{1}$.

Let now $n\in\ZZ$ with $0<n<N/2$, and choose some orthonormal basis $(e_j^+)_{j\in J_{\zeta^{2n}}}$ of $\HH_{\zeta^{2n}}$. Set $e_j^-:=\zeta^n Ae_j^+\in\HH_{\zeta^{-2n}}$ for each $j\in J_{\zeta^{2n}}$. Then $\{e_j^+,e_j^- \ | \ j\in J_{\zeta^{2n}}\}$ is an orthonormal basis of $\HH_{\zeta^{2n}}\oplus \HH_{\zeta^{-2n}}$, and $A$ has the desired form in each of the bases $\{e_j^+,e_j^-\}$, $j\in J_{\zeta^{2n}}$.

Finally, note that for any unit vector $v\in\HH_{-1}$, the subspace $\CC v\oplus\CC Av$ is two-dimensional and stabilised by $A$. Indeed, 
$$\langle v,Av\rangle=\overline{\langle Av,A^2v\rangle}=-\overline{\langle Av,v\rangle}=-\langle v,Av\rangle$$
and hence $\langle v,Av\rangle=0$, so that $\{v,Av\}$ is an orthonormal basis of $\CC v\oplus\CC Av$.
Using Zorn's lemma, we may thus choose an orthonormal subset $(e_j^+)_{j\in J_{-1}}$ in $\HH_{-1}$ such that $\{e_j^+,e_j^-:=\zeta^{N/2}Ae_j^+ \ | \ j\in J_{-1}\}$ is an orthonormal basis of $\HH_{-1}$. Again, $A$ has the desired form in each of the bases $\{e_j^+,e_j^-\}$, $j\in J_{-1}$.
This concludes the proof of the proposition. 
\end{proof}

\section{Root data for affinisations of Hilbert--Lie algebras}\label{section:root_data}
Let $\kk=\uu_2(\HH_{\KK})$ for some infinite-dimensional Hilbert space $\HH_{\KK}$ over $\KK=\RR$, $\CC$ or $\HHH$, and let $\varphi\in\Aut(\kk)$ be of finite order.

\begin{lemma}\label{lemma:choiceA}
There exists some unitary (if $\KK=\RR,\CC,\HHH$) or antiunitary (if $\KK=\CC$) operator $A$ on $\HH_{\KK}$ of finite order such that $\varphi=\pi_A$. 
\end{lemma}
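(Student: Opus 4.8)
The plan is to start from the known description of $\Aut(\kk)$ recalled in \S\ref{subsection:AG} and then repair the order of the implementing operator. First I would invoke \cite[Theorem~1.15]{Hloopgroups} to write $\varphi=\pi_A$ for some operator $A$ on $\HH_{\KK}$ which is unitary (for every $\KK$) or antiunitary (only possible when $\KK=\CC$); a priori this $A$ need not be of finite order. Let $N$ be the order of $\varphi$. The engine of the argument is the identification of the kernel of the map $A\mapsto\pi_A$: since $\pi_B=\id$ means exactly that $B$ commutes with every $x\in\kk=\uu_2(\HH_{\KK})$, and since $\kk$ (through the finite-rank skew-Hermitian operators it contains) acts $\KK$-irreducibly on $\HH_{\KK}$, a Schur-type argument forces such a $B$ to be a scalar $\lambda\,\id$ with $\lambda$ in the centre of $\KK$. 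Here I would note the quaternionic subtlety: because $\kk$ contains the finite-rank operators $q\cdot P$ with $q$ an imaginary quaternion and $P$ a projection, the commutant is $\RR\,\id$ rather than $\HHH\,\id$. Imposing unitarity of $B$ then gives $\lambda\in\mathbb{T}$ when $\KK=\CC$, and $\lambda\in\{\pm1\}$ when $\KK\in\{\RR,\HHH\}$.

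Next I would dispose of the $\KK$-linear cases. As $\varphi^N=\id$, the operator $A^N$ lies in the kernel, so $A^N=\lambda\,\id$ for a unit scalar $\lambda$. When $\KK\in\{\RR,\HHH\}$ we have $\lambda\in\{\pm1\}$, whence $A^{2N}=\id$ and $A$ is \emph{already} of finite order, so nothing needs to be corrected. When $\KK=\CC$ and $A$ is unitary, $\lambda\in\mathbb{T}$ can be arbitrary (possibly of infinite order), and this is precisely the case requiring a correction: I choose $\mu\in\mathbb{T}$ with $\mu^N=\lambda^{-1}$ and replace $A$ by $\mu A$. Because scalars are central, $\pi_{\mu A}=\pi_A=\varphi$ and $\mu A$ is again unitary, while $(\mu A)^N=\mu^N A^N=\id$; thus $\mu A$ is the desired finite-order operator realising $\varphi$.

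The genuinely delicate case is $\KK=\CC$ with $A$ antiunitary, and this is where I expect the main obstacle. The scalar correction above is unavailable here: for antilinear $A$ and $|\mu|=1$ one computes $(\mu A)^2=\mu\bar\mu A^2=A^2$, so multiplying by a unimodular scalar never alters the even powers of $A$ and cannot repair its order. Instead I would argue spectrally. The operator $A^{2N}=(A^2)^N$ is $\CC$-linear and lies in the kernel (since $\varphi^{2N}=\id$), so $A^{2N}=\kappa\,\id$ with $\kappa\in\mathbb{T}$; hence the unitary $U:=A^2$ satisfies $U^N=\kappa\,\id$ and is therefore semisimple with finite spectrum contained in the $N$-th roots of $\kappa$. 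Since $A$ is antilinear and commutes with $U$, from $Uv=\alpha v$ one gets $U(Av)=A(Uv)=\bar\alpha\,Av$, so the spectrum of $U$ is stable under complex conjugation. Comparing $\alpha^N=\kappa$ with $\bar\alpha^{\,N}=\overline{\alpha^N}=\bar\kappa$ for a conjugate pair of eigenvalues forces $\kappa=\bar\kappa$, i.e.\ $\kappa=\pm1$. Consequently $A^{4N}=\kappa^2\,\id=\id$, so $A$ is automatically of finite order, with no correction needed. Assembling the three cases then produces a finite-order operator $A$ of the prescribed type with $\varphi=\pi_A$, which completes the proof.
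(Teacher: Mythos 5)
Your proposal is correct and follows the same skeleton as the paper's proof: start from \cite[Theorem~1.15]{Hloopgroups}, identify the kernel of $A\mapsto\pi_A$ with the unit scalars in the centre of $\KK$, conclude at once for $\KK\in\{\RR,\HHH\}$, rescale $A\mapsto\mu A$ in the complex unitary case, and show that in the complex antiunitary case no correction is needed because antilinearity forces the offending scalar to be real. The one step where you genuinely diverge is that last case. The paper applies the kernel argument to $B^N$ itself ($N$ the order of $\varphi$), writing $B^N=\la_0\id$, and obtains $\la_0\in\{\pm 1\}$ from the one-line computation $\la_0(Bv)=B(B^Nv)=B(\la_0 v)=\overline{\la}_0(Bv)$; this implicitly uses that $B^N$ is $\CC$-linear, i.e.\ that $N$ is even (true, because an antilinear $C$ centralising $\kk$ would have to satisfy $CPC\inv=-P$ for every rank-one orthogonal projection $P$, as $iP\in\kk$, which is impossible), a point the paper passes over in silence. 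You instead apply the kernel argument to $A^{2N}=(A^2)^N$, which is linear regardless of the parity of $N$, thereby sidestepping this issue entirely---a small gain in care. On the other hand, your spectral argument (diagonalise $U=A^2$, use conjugation-stability of its spectrum, compare $\alpha^N=\kappa$ with $\overline{\alpha}^{\,N}=\overline{\kappa}$) is heavier than necessary: the paper's one-line trick applies verbatim to $A^{2N}$, namely $\kappa(Av)=A^{2N}(Av)=A(A^{2N}v)=\overline{\kappa}(Av)$, giving $\kappa=\pm 1$ with no mention of eigenvalues. Both routes are valid; yours lands on $A^{4N}=\id$ where the paper gets $A^{2N}=\id$, a difference that is immaterial since the lemma only asks for finite order.
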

\begin{proof}
By \S\ref{subsection:AG}, there exists some unitary (if $\KK=\RR,\CC,\HHH$) or antiunitary (if $\KK=\CC$) operator $B$ on $\HH_{\KK}$ with $\varphi=\pi_B$. Let $N\in\NN$ be the order of $\varphi$. Then $B^N$ centralises $\kk$, and hence $B^N=\la_0\cdot\id_{\HH_{\KK}}$ for some $\la_0$ in the center of $\KK$ with $|\la_0|=1$.

If $\KK=\RR$ or $\KK=\HHH$, then $\la_0\in\{\pm 1\}$ and hence $A:=B$ satisfies $A^{2N}=\id$.
If $\KK=\CC$ and $B$ is unitary, we set $A:=\nu B$ for some $N$-th root $\nu\in\CC$ of $\la_0\inv$, so that $A^N=\id$.
Finally, assume that $\KK=\CC$ and that $B$ is antiunitary. Since for any nonzero $v\in\HH_{\CC}$,
$$\la_0(Bv)=B^N(Bv)=B(B^Nv)=B(\la_0v)=\overline{\la}_0(Bv),$$
we get $\la_0=\overline{\la}_0\in\RR$ and hence $\la_0\in\{\pm 1\}$. We then set $A:=B$, so that $A^{2N}=\id$.
\end{proof}

For each pair $(\kk,\varphi)$, we now describe data $\psi$, $(\phi_t)_{t\in\RR}$, $\ttt$ and $\mu$ as in Proposition~\ref{prop:untwisting_extension2}, thus yielding the desired isomorphisms from Theorem~\ref{thm:mainintro}.

\begin{example}\label{ex:CL}
Let $\kk=\uu_2(\HH)$ for some infinite-dimensional complex Hilbert space $\HH$ and let $\varphi\in\Aut(\kk)$ be of finite order. Let $A$ be some unitary operator on $\HH$ of finite order $N\in\NN$ such that $\varphi=\pi_A$ (see Lemma~\ref{lemma:choiceA}). Set $\zeta:=e^{2i\pi/N}$.

As every unitary representation of the cyclic group of order $N$ on $\HH$ is a direct sum of $1$-dimensional irreducible ones, we may choose some orthonormal basis $(e_j)_{j\in J}$ of $\HH$ consisting of $A$-eigenvectors. Let $\ttt\subseteq\kk$ be the subalgebra of all diagonal operators with respect to the $e_j$, $j\in J$. Then $\ttt$ is elliptic and maximal abelian, and $\ttt_{\CC}\cong \ell^2(J,\CC)$ with respect to the orthonormal basis $\{E_j \ | \ j\in J\}\subseteq i\ttt$ given by $E_je_k:=\delta_{jk}e_k$. The set of roots of $\kk_{\CC}\cong\gl_2(\HH)$ with respect to $\ttt_{\CC}$ is given by the root system
$$A_J=\{\epsilon_j-\epsilon_k \ | \ j\neq k, \ j,k\in J\},$$
where $\epsilon_j(E_k):=\delta_{jk}$. 
The corresponding set of compact roots for $\widehat{\LLL}(\kk)$ is of type $A_J^{(1)}$ (see \cite[Examples~1.10 and 2.4]{Hloopgroups}).

For each $j\in J$, let $n_j\in\{0,1,\dots,N-1\}$ be such that $Ae_j=\zeta^{n_j}e_j$. 
For each $t\in\RR$, let also $U_t\in U(\HH)$ be the diagonal operator defined by $U_te_j=\zeta^{tn_j}e_j$, and set $\phi_t=\pi_{U_t}\in\Aut(\kk)$. Let $\mu\in i\ttt^*$ be defined by $\mu(E_{j}):=\mu_j:=-n_j/N$. 
Setting $\Lambda_{\mu}:=i\mu^{\sharp}=\sum_{j\in J}{i\mu_jE_{j}}\in\uu(\HH)$, we then have
$$\frac{d}{dt}U_{t/2\pi}=-\Lambda_{\mu}U_{t/2\pi}=-U_{t/2\pi}\Lambda_{\mu}.$$
Finally, note that, for any $t\in\RR$ and $x\in\ttt$, the operators $U_t$ and $x$ are both diagonal with respect to the $e_j$, $j\in J$, and hence commute. In particular $\ttt_0:=\ttt^{\varphi}=\ttt\subseteq\ttt^{\phi_t}$ for all $t\in\RR$.

We may thus apply Proposition~\ref{prop:untwisting_extension2} (with $\psi=\id$) and conclude that for any $\nu\in i\ttt_0^*$, there is an isomorphism
$$\widehat{\Phi}\co \widehat{\LLL}_{\varphi}^{\nu}(\kk)\to \widehat{\LLL}^{\mu+\nu}(\kk):(z_1,\xi(t),z_2)\mapsto (z_1,\phi_{t/2\pi}(\xi(t)),z_2)$$
fixing the Cartan subalgebra $\ttt_0^e:=\RR\oplus\ttt_0\oplus\RR$ pointwise. Moreover, $\widehat{\Phi}$ induces an isomorphism of locally affine root systems $$\pi\co \Delta(\widehat{\LLL}_{\varphi}^{\nu}(\kk),\ttt_0^e)_c\to A_J^{(1)}=\Delta(\widehat{\LLL}^{\mu+\nu}(\kk),\ttt_0^e)_c:(\alpha,n)\mapsto \big(\alpha,\tfrac{n}{N_{\varphi}}-\mu(\alpha^{\sharp})\big),$$
where $N_{\varphi}$ is the order of $\varphi$. Finally, the Weyl groups $\WW(\widehat{\LLL}_{\varphi}^{\nu}(\kk),\ttt_0^e)$ and $\WW(\widehat{\LLL}^{\mu+\nu}(\kk),\ttt_0^e)$ coincide.
\end{example}

\begin{example}\label{ex:HL}
Let $\kk=\uu_2(\HH_{\HHH})$ for some infinite-dimensional quaternionic Hilbert space $\HH_{\HHH}$ and let $\varphi\in\Aut(\kk)$ be of finite order. Let $A$ be some unitary operator on $\HH_{\HHH}$ of finite order $N\in\NN$ such that $\varphi=\pi_A$ (see Lemma~\ref{lemma:choiceA}). Set $\zeta:=e^{2i\pi/N}$.

The quaternionic Hilbert space $\HH_{\HHH}$ can be constructed as $\HH_{\HHH}=\HH^2$ for some complex Hilbert space $\HH$ with conjugation $\sigma$, where the quaternionic structure on $\HH^2$ is defined by the antilinear isometry\footnote{Writing $\CC=\RR+\RR\mathcal I$ and $\HHH=\CC+\CC\mathcal J=\RR+\RR\mathcal I+\RR\mathcal J+\RR\mathcal I \mathcal J$, the isometry $\widetilde{\sigma}$ corresponds to left multiplication by~$\mathcal J$.} $\widetilde{\sigma}(v,w):=(-\sigma w,\sigma v)$. With this identification, we then have
$$\kk=\uu_2(\HH_{\HHH})=\{x\in\uu_2(\HH^2) \ | \ \widetilde{\sigma}x=x\widetilde{\sigma}\}$$
and
$$U(\HH_{\HHH})=\{g\in U(\HH^2) \ | \ \widetilde{\sigma}g\widetilde{\sigma}\inv=g\}.$$
Let $$\HH^2=(\HH^2)_1\oplus (\HH^2)_{-1} \oplus \bigoplus_{0<n<N/2}{((\HH^2)_{\zeta^n}\oplus (\HH^2)_{\zeta^{-n}})}$$ be the decomposition of $\HH^2$ into $A$-eigenspaces, where $(\HH^2)_{\la}$ denotes the $A$-eigenspace corresponding to the eigenvalue $\la$. Since $A\in U(\HH^2)$ commutes with $\widetilde{\sigma}$, the antilinear isometry $\widetilde{\sigma}$ maps $(\HH^2)_{\la}$ to $(\HH^2)_{\overline{\la}}$ for each $A$-eigenvalue $\la$. In particular, each of the subspaces $(\HH^2)_1$, $(\HH^2)_{-1}$ and $(\HH^2)_{\zeta^n}\oplus (\HH^2)_{\zeta^{-n}}$ with $0<n<N/2$ is a quaternionic Hilbert subspace of $\HH_{\HHH}$.
Let $(e_j)_{j\in J_{\pm 1}}$ be an orthonormal basis of $(\HH^2)_{\pm 1}$ over $\HHH$, and for each $n$ with $0<n<N/2$, let $(e_j)_{j\in J_{\zeta^n}}$ be an orthonormal basis of $(\HH^2)_{\zeta^n}\oplus (\HH^2)_{\zeta^{-n}}$ over $\HHH$ that is contained in $(\HH^2)_{\zeta^n}$. Then the reunion of these bases yields an orthonormal basis $(e_j)_{j\in J}$ of $\HH_{\HHH}$ over $\HHH$. 

Up to replacing $\HH$ by the complex Hilbert space generated by $(e_j)_{j\in J}$, we may then assume that $\HH_{\HHH}$ is constructed as above as $\HH_{\HHH}=\HH^2$ for some complex Hilbert space $\HH$ with orthonormal basis $(e_j)_{j\in J}$ (and complex conjugation $\sigma$ with respect to this basis) in such a way that for each $j\in J$, the $\CC$-basis vectors $(e_j,0)$ and $(0,e_j)$ are $A$-eigenvectors of respective eigenvalues $\zeta^{n_j}$ and $\zeta^{-n_j}$ for some natural number $n_j$ with $0\leq n_j\leq N/2$.

Let $\ttt\subseteq\kk$ be the subalgebra of all diagonal operators with respect to the $(e_j,0)$ and $(0,e_j)$, $j\in J$. 
Then $\ttt$ is elliptic and maximal abelian, and $\ttt_{\CC}$ consists of diagonal operators in 
$$\kk_{\CC}=\uu_2(\HH_{\HHH})_{\CC}=\big\{(\begin{smallmatrix}A & B \\ C & -A^T\end{smallmatrix})\in B_2(\HH^2) \ | \ B^T=B, \ C^T=C\big\}$$
of the form $h=\diag((h_j),(-h_j))$, where $B^T:=\sigma B^*\sigma$ for all $B\in B(\HH)$. Thus $\ttt_{\CC}\cong \ell^2(J,\CC)$ with respect to the orthonormal basis $\{E_j \ | \ j\in J\}\subseteq i\ttt$ defined by $E_j(e_k,0):=\delta_{jk}(e_k,0)$ and $E_j(0,e_k):=-\delta_{jk}(0,e_k)$.
The set of roots of $\kk_{\CC}$ with respect to $\ttt_{\CC}$ is given by the root system
$$C_J=\{\pm 2\epsilon_j, \pm(\epsilon_j\pm\epsilon_k) \ | \ j\neq k, \ j,k\in J\},$$ where $\epsilon_j(E_k):=\delta_{jk}$. 
The corresponding set of compact roots for $\widehat{\LLL}(\kk)$ is of type $C_J^{(1)}$ (see \cite[Examples~1.12 and 2.4]{Hloopgroups}).

For each $t\in\RR$, let $U_t\in U(\HH_{\HHH})$ be the diagonal operator defined by $U_t(e_j,0)=\zeta^{tn_j}(e_j,0)$ (and hence $U_t(0,e_j)=\zeta^{-tn_j}(0,e_j)$) and set $\phi_t=\pi_{U_t}\in\Aut(\kk)$.
Let $\mu\in i\ttt_{\CC}^*$ be defined by $\mu(E_{j}):=\mu_j:=-n_j/N$. Setting $\Lambda_{\mu}:=i\mu^{\sharp}=\sum_{j\in J}{i\mu_jE_{j}}\in\uu(\HH_{\HHH})$, we then have
$$\frac{d}{dt}U_{t/2\pi}=-\Lambda_{\mu}U_{t/2\pi}=-U_{t/2\pi}\Lambda_{\mu}.$$
Moreover, for any $t\in\RR$ and $x\in\ttt$, the operators $U_t$ and $x$ are both diagonal with respect to the $(e_j,0)$ and $(0,e_j)$, $j\in J$, and hence commute. In particular $\ttt_0:=\ttt^{\varphi}=\ttt\subseteq\ttt^{\phi_t}$ for all $t\in\RR$.

We may thus apply Proposition~\ref{prop:untwisting_extension2} (with $\psi=\id$) and conclude that for any $\nu\in i\ttt_0^*$, there is an isomorphism
$$\widehat{\Phi}\co \widehat{\LLL}_{\varphi}^{\nu}(\kk)\to \widehat{\LLL}^{\mu+\nu}(\kk):(z_1,\xi(t),z_2)\mapsto (z_1,\phi_{t/2\pi}(\xi(t)),z_2)$$
fixing the Cartan subalgebra $\ttt_0^e:=\RR\oplus\ttt_0\oplus\RR$ pointwise. Moreover, $\widehat{\Phi}$ induces an isomorphism of locally affine root systems $$\pi\co \Delta(\widehat{\LLL}_{\varphi}^{\nu}(\kk),\ttt_0^e)_c\to C_J^{(1)}=\Delta(\widehat{\LLL}^{\mu+\nu}(\kk),\ttt_0^e)_c:(\alpha,n)\mapsto \big(\alpha,\tfrac{n}{N_{\varphi}}-\mu(\alpha^{\sharp})\big),$$
where $N_{\varphi}$ is the order of $\varphi$. Finally, the Weyl groups $\WW(\widehat{\LLL}_{\varphi}^{\nu}(\kk),\ttt_0^e)$ and $\WW(\widehat{\LLL}^{\mu+\nu}(\kk),\ttt_0^e)$ coincide.
\end{example}

\begin{example}\label{ex:RL}
Let $\kk=\uu_2(\HH_{\RR})$ for some infinite-dimensional real Hilbert space $\HH_{\RR}$ and let $\varphi\in\Aut(\kk)$ be of finite order. Let $A$ be some unitary operator on $\HH_{\RR}$ of finite order $N\in\NN$ such that $\varphi=\pi_A$ (see Lemma~\ref{lemma:choiceA}).

Note that $A$ may be viewed as a unitary operator on the complexification $\HH:=(\HH_{\RR})_{\CC}$ of $\HH_{\RR}$ that commutes with complex conjugation. Since moreover $\HH$ decomposes as an orthogonal direct sum of one-dimensional $A$-eigenspaces, $\HH_{\RR}$ decomposes as an orthogonal direct sum
$$\HH_{\RR}=\HH_1\oplus\HH_{-1}\oplus \HH_{\zeta},$$ 
where $\HH_{\pm 1}$ is the $A$-eigenspace for the eigenvalue $\pm 1$, and where $\HH_{\zeta}$ has an orthonormal basis $\{e_j,e'_j \ | \ j\in J_{\zeta}\}$ such that 
$A$ stabilises each plane $\RR e_j+\RR e'_j$ and is of the form
\begin{equation}\label{eqn:cossin}
\begin{pmatrix}\cos(2\pi n_j/N) & -\sin(2\pi n_j/N) \\ \sin(2\pi n_j/N) & \cos(2\pi n_j/N)\end{pmatrix}
\end{equation}
in the basis $\{e_j,e'_j\}$, for some $n_j\in\ZZ$ with $0<n_j<N/2$. We let also $\{e_j,e_j' \ | \ j\in J_{\pm 1}\}\cup S_{\pm 1}$ denote an orthonormal basis of $\HH_{\pm 1}$, where $S_{\pm 1}:=\{e_{j_{\pm 1}}\}$ is a singleton if $\dim\HH_{\pm 1}$ is finite and odd and $S_{\pm 1}:=\varnothing$ otherwise. Up to replacing $A$ by $-A$ (this does not modify $\varphi$), we may then assume that $S_1=\{e_{j_1}\}$ and $S_{-1}=\varnothing$ in case $|S_1\cup S_{-1}|=1$. Note that $N$ must be even if $\HH_{-1}\neq\{0\}$.

Set $n_j:=0$ (resp. $n_j:=N/2$) for each $j\in J_1$ (resp. $j\in J_{-1}$). Writing $J':=J_{\zeta}\cup J_1\cup J_{-1}$, we thus get an orthonormal decomposition
$$\HH_{\RR}=\RR e_{j_1}\oplus\RR e_{j_{-1}}\oplus\widehat{\bigoplus}_{j\in J'}{(\RR e_j\oplus\RR e'_j)}$$
such that $A$ stabilises each plane $\RR e_j+\RR e_j'$ ($j\in J'$) and is of the form (\ref{eqn:cossin}) in the basis $\{e_j,e'_j\}$, and with the convention that $e_{j_{\pm 1}}\in\HH_{\pm 1}$ is omitted if $S_{\pm 1}=\varnothing$.
If $|S_1\cup S_{-1}|=2$, we set $e'_{j_1}:=e_{j_{-1}}$ and $J:=J'\cup\{j_1\}$. Otherwise, we set $J:=J'$. 

We choose a maximal abelian subalgebra $\ttt\subseteq\kk$ such that $\ker\ttt:=\{x\in\HH_{\RR} \ | \ h.x=0 \ \forall h\in\ttt\}$ is the one-dimensional subspace $\RR e_{j_1}$ if $|S_1\cup S_{-1}|=1$ and $\ker\ttt=\{0\}$ otherwise, such that $\ttt$ commutes with the orthogonal complex structure $\III$ on $(\ker\ttt)^{\perp}=\widehat{\bigoplus}_{j\in J}{(\RR e_j\oplus\RR e'_j)}$ defined by $\III e_j:=e'_j$ for all $j\in J$, and such that all planes $\RR e_j+\RR \III e_j$ ($j\in J$) are $\ttt$-invariant (see \cite[Example~1.13]{Hloopgroups}).
For $j\in J$, we define the elements
$$f_j:= \frac{1}{\sqrt{2}}(e_j-i\III e_j)\quad\textrm{and}\quad f_{-j}:= \frac{1}{\sqrt{2}}(e_j+i\III e_j)$$ of $\HH$.
If $\ker\ttt\neq \{0\}$, we also set $f_{j_1}:=e_{j_1}$. Then the $f_{j}$ form an orthonormal basis of $\HH$ consisting of $\ttt$-eigenvector. 

Note that for each $j\in J'$, the basis elements $f_j$ and $f_{-j}$ are also $A$-eigenvectors, with respective eigenvalues $\zeta^{n_j}$ and $\zeta^{-n_j}$. For each $t\in\RR$, we let $U_t\in U(\HH_{\RR})$ be defined by the matrix 
$$\begin{pmatrix}\cos(2t\pi n_j/N) & -\sin(2t\pi n_j/N) \\ \sin(2t\pi n_j/N) & \cos(2t\pi n_j/N)\end{pmatrix}$$
in the basis $\{e_j,e'_j\}$ for each $j\in J$, where we have set $n_{j_1}:=0$ in case $|S_1\cup S_{-1}|=2$, and by $U_te_{j_1}=e_{j_1}$ in case $|S_1\cup S_{-1}|=1$. We also set $\phi_t=\pi_{U_t}\in\Aut(\kk)$. Note then that $\phi_1\psi=\varphi$, where $\psi\in\Aut(\kk)$ is the order $1$ or $2$ automorphism $\psi=\pi_{B}$ of $\kk$ corresponding to the matrix $B\in U(\HH_{\RR})$ whose restriction to $\bigoplus_{j\in J'}(\RR e_j\oplus\RR e'_j)$ is the identity, and such that $Be_{j_{\pm 1}}=\pm e_{j_{\pm 1}}$ if $S_{\pm 1}\neq \varnothing$. Thus $\psi=\id$ unless $|S_1\cup S_{-1}|=2$. Note also that $\psi$ commutes with $\phi_t$ for all $t\in\RR$. 

The restriction of any $x\in\ttt$ to $\RR e_j\oplus\RR e_j'$, $j\in J$, is of the form $$\begin{pmatrix}0 & a \\ -a & 0\end{pmatrix}$$
in the basis $\{e_j,e_j'\}$, for some $a\in\RR$. In particular, $x$ commutes with $U_t$ for each $t\in\RR$, so that $\ttt^{\varphi}\subseteq\ttt=\ttt^{\phi_t}$ for all $t\in\RR$. 
The same argument implies that $\ttt^{\varphi}$ and $\ttt^{\psi}$ both contain the subspace
$$\ttt_0:=\{x\in\ttt \ | \ xe_j=xe_j'=0 \ \forall j\in J\setminus J'\}.$$

We now claim that $\ttt^{\varphi}$ and $\ttt^{\psi}$ coincide with $\ttt_0$ and are maximal abelian in $\kk^{\varphi}$ and $\kk^{\psi}$, respectively. Indeed, if $\psi=\id$, so that $\varphi=\phi_1$, we have $\ttt^{\varphi}=\ttt=\ttt^{\psi}=\ttt_0$. 
Assume now that $\psi=\pi_B$ has order $2$, so that $|S_1\cup S_{-1}|=2$. Then $\ker\ttt=\{0\}$ and $\ttt^{\varphi}=\ttt_0=\ttt^{\psi}$ because the restriction of $\varphi$ (resp. $\psi$) to $\RR e_{j_1}\oplus\RR e_{j_{-1}}$ is of the form $(\begin{smallmatrix}1 &0 \\ 0& -1\end{smallmatrix})$ in the basis $\{e_{j_1},e_{j_{-1}}\}$ and
$$\begin{pmatrix}1 & 0 \\ 0 & -1\end{pmatrix}\begin{pmatrix}0 & a \\ -a & 0\end{pmatrix}\begin{pmatrix}1 & 0 \\ 0 & -1\end{pmatrix}=\begin{pmatrix}0 & -a \\ a & 0\end{pmatrix}.$$
Let $x\in\kk^{\varphi}$ (resp. $\kk^{\psi}$) be such that $\ttt_0+\RR x$ is abelian.
Then $x$ stabilises each plane $\RR e_j+\RR e_j'$ ($j\in J'$) and hence decomposes as
$x=x_0+x_1$ for some $x_0\in\ttt_0$ and some $x_1\in\kk$ with $x_1e_j=xe_j'=0$ for all $j\in J'$. Since $x_1=x-x_0$ is skew-symmetric, it stabilises $\RR e_{j_1}\oplus\RR e_{j_{-1}}$. Moreover, $x_1$ is fixed by $\varphi$ (resp. $\psi$), and hence we conclude as above that $x_1=0$. Thus $x\in\ttt_0$, and hence $\ttt_0$ is maximal abelian in $\kk^{\varphi}$ (resp. $\kk^{\psi}$), as desired.

The complexification $(\ttt_0)_{\CC}$ of $\ttt_0$ is precisely the set of all those elements in $\kk^{\psi}_{\CC}$ which are diagonal with respect to the orthonormal basis of $\HH$ consisting of the $f_j$. Thus $(\ttt_0)_{\CC}\cong\ell^2(J',\CC)$ with respect to the orthonormal basis $\{E_j \ | \ j\in J'\}\subseteq i\ttt_0$ defined by $E_jf_k:=\delta_{jk}f_k$ for all $k\in J'$.
Define also $\epsilon_j\in i\ttt_0^*$ by $\epsilon_j(E_k)=\delta_{jk}$, $k\in J'$. 

If $\psi=\id$, the set of roots of $\kk^{\psi}_{\CC}=\kk_{\CC}$ with respect to $\ttt^{\psi}_{\CC}=\ttt_{\CC}$ is given by
$$D_J=\{\pm(\epsilon_j\pm \epsilon_k) \ | \ j\neq k, \ j,k\in J\}\quad\textrm{if $\ker\ttt=\{0\}$}$$
and
$$B_J=\{\pm(\epsilon_j\pm \epsilon_k) \ | \ j\neq k, \ j,k\in J\}\cup\{\pm\epsilon_j \ | \ j\in J\}\quad\textrm{otherwise.}$$
The set of compact roots for $\widehat{\LLL}_{\psi}(\kk)=\widehat{\LLL}(\kk)$ is then respectively of type $D_J^{(1)}$ and $B_J^{(1)}$ (see \cite[Examples~1.13 and 2.4]{Hloopgroups}).

If $\psi=\pi_B$ has order $2$ (so that $|S_1\cup S_{-1}|=2$) then, up to replacing $B$ by $-B$ (which does not modify $\psi$), the operator $B$ is the orthogonal reflection in the hyperplane $e_{j_{-1}}^{\perp}$. Thus $\psi$ is the standard automorphism from \cite[Example~2.8]{Hloopgroups}. As observed above, $\ttt^{\psi}=\ttt_0$ is maximal abelian in
$$\kk^{\psi}=\{x\in\kk \ | \ xe_{j_{-1}}=0\}$$ 
and $\ker(\ttt^{\psi})\cap e_{j_{-1}}^{\perp}=\RR e_{j_1}$ is one-dimensional. The set of roots of $\kk^{\psi}_{\CC}$ with respect to $\ttt^{\psi}_{\CC}$ is then of type $B_{J'}$, while the set of compact roots for $\widehat{\LLL}_{\psi}(\kk)$ is of type $B_J^{(2)}$
(see \cite[Example~2.8]{Hloopgroups}).

Back to the general case ($\psi$ of order $1$ or $2$), let $\mu\in i\ttt_0^*$ be defined by $\mu(E_{j}):=\mu_j:=-n_j/N$ for all $j\in J'$. Setting $\Lambda_{\mu}:=i\mu^{\sharp}=\sum_{j\in J'}{i\mu_jE_{j}}\in\uu(\HH_{\RR})$, we then have
$$\frac{d}{dt}U_{t/2\pi}=-\Lambda_{\mu}U_{t/2\pi}=-U_{t/2\pi}\Lambda_{\mu}.$$

We may thus apply Proposition~\ref{prop:untwisting_extension2} and conclude that for any $\nu\in i\ttt_0^*$, there is an isomorphism
$$\widehat{\Phi}\co \widehat{\LLL}_{\varphi}^{\nu}(\kk)\to \widehat{\LLL}_{\psi}^{\mu+\nu}(\kk):(z_1,\xi(t),z_2)\mapsto (z_1,\phi_{t/2\pi}(\xi(t)),z_2)$$
fixing the Cartan subalgebra $\ttt_0^e:=\RR\oplus\ttt_0\oplus\RR$ pointwise. Moreover, $\widehat{\Phi}$ induces an isomorphism of locally affine root systems $$\pi\co \Delta(\widehat{\LLL}_{\varphi}^{\nu}(\kk),\ttt_0^e)_c\to \Delta(\widehat{\LLL}_{\psi}^{\mu+\nu}(\kk),\ttt_0^e)_c\in\{D_J^{(1)},B_J^{(1)},B_J^{(2)}\}:(\alpha,n)\mapsto \big(\alpha,N_{\psi}\cdot(\tfrac{n}{N_{\varphi}}-\mu(\alpha^{\sharp}))\big),$$
where $N_{\varphi}$ is the order of $\varphi$ and $N_{\psi}\in\{1,2\}$ is the order of $\psi$. Finally, the Weyl groups $\WW(\widehat{\LLL}_{\varphi}^{\nu}(\kk),\ttt_0^e)$ and $\WW(\widehat{\LLL}_{\psi}^{\mu+\nu}(\kk),\ttt_0^e)$ coincide.
\end{example}

\begin{example}\label{ex:CA}
Let $\kk=\uu_2(\HH)$ for some infinite-dimensional complex Hilbert space $\HH$ and let $\varphi\in\Aut(\kk)$ be of finite order. Let $A$ be some antiunitary operator on $\HH$ of finite order $2N$ for some $N\in\NN$ and such that $\varphi=\pi_A$ (see Lemma~\ref{lemma:choiceA}).

We set $\zeta:=e^{i\pi/N}$ and we apply Proposition~\ref{prop:structure_antiunitary} to $A$.
Let $\widetilde{\HH}_1$ and $\{e_j^+ ,e_j^-\ | \ j\in J_{\zeta^{2n}}\}$, $0\leq n\leq N/2$, be as in Proposition~\ref{prop:structure_antiunitary}(ii). Thus $A$ stabilises each plane $\CC e_j^+\oplus\CC e_j^-$, $j\in J_{\zeta^{2n}}$, and has the form
$$\begin{pmatrix}0 & \zeta^n \\ \zeta^{-n} & 0\end{pmatrix}\sigma_{e_j^+,e_j^-}$$
in the basis $\{e_j^+,e_j^-\}$. If $\dim\widetilde{\HH}_1=1$, we also choose some basis $S_1:=\{e_{j_0}\}$ of $\widetilde{\HH}_1$ such that $Ae_{j_0}=e_{j_0}$; if $\dim\widetilde{\HH}_1=0$, we set $S_1:=\varnothing$.
Set $$J:=\bigcup_{0\leq n\leq N/2}{J_{\zeta^{2n}}}\quad\textrm{and}\quad \BBB_{\pm}:=\{e_j^{\pm} \ | \ j\in J\}.$$
Then $\BBB:=\BBB_+\cup\BBB_-\cup\ S_1$ is an orthonormal basis of $\HH$. For each $j\in J$ and $n\in\ZZ$ with $0\leq n\leq N/2$, we set $n_j:=n$ if $j\in J_{\zeta^{2n}}$. 

To treat both cases $S_1=\varnothing$ and $S_1\neq\varnothing$ at once, we adopt the convention that whenever $e_{j_0}$ appears in what follows, it should be omitted if $S_1=\varnothing$. We also set $\epsilon:=i\in\CC$ if $S_1=\varnothing$ and $\epsilon:=1\in\CC$ otherwise.

Let $\HH_0^{\pm}$ be the closed subspace of $\HH$ spanned by $\BBB^{\pm}$. We define on  $\HH_0^{+}$ the complex conjugation $\sigma_0^{+}=\sigma_{\epsilon\BBB_{+}}$ with respect to $\epsilon\BBB_{+}=\{\epsilon e_j^{+} \ | \ j\in J\}$ and on $\HH_0^{-}$ the complex conjugation $\sigma_0^{-}=\sigma_{\BBB_{-}}$ with respect to $\BBB_{-}$. 
Write $\HH$ as $$\HH=\HH_0^+\oplus \CC e_{j_0} \oplus \HH_0^-,$$ which we endow with the conjugation $\sigma$ extending $\sigma_0^+$ and $\sigma_0^-$, and such that $\sigma e_{j_0}=e_{j_0}$.
Consider the automorphism $\psi\in \Aut(\kk)$ defined by
$$\psi(x)=S\sigma x (S\sigma)\inv\quad\textrm{for $x\in\kk$,}$$
where
$$S=\begin{pmatrix}0&0&\mathbf 1\\ 0 & 1 & 0\\ \mathbf 1 &0 &0\end{pmatrix}\quad\textrm{if $S_1\neq\varnothing$}\quad\textrm{and}\quad S=\begin{pmatrix}0&\mathbf 1\\ -\mathbf 1  &0\end{pmatrix}\quad\textrm{if $S_1=\varnothing$}.$$
Setting $\widetilde{\sigma}:=S\sigma$, we thus have 
$$\widetilde{\sigma}e_j^{\pm}=e_j^{\mp}\quad\textrm{for all $j\in J$}\quad\textrm{and}\quad\widetilde{\sigma}e_{j_0}=e_{j_0}.$$
Note that $\psi$ is the standard automorphism of \cite[Example~2.9]{Hloopgroups} if $S_1=\varnothing$ and of \cite[Example~2.10]{Hloopgroups} if $S_1\neq\varnothing$.

Let $(U_t)_{t\in\RR}$ be the one-parameter group of unitary operators on $\HH$ defined by
$$U_te_j^{\pm}=\zeta^{\pm n_jt}e_j^{\pm}\quad\textrm{for all $j\in J$}\quad\textrm{and}\quad U_te_{j_0}=e_{j_0}.$$
Then $A=U_1\widetilde{\sigma}$ and $U_t$ commutes with $\widetilde{\sigma}$ for each $t\in\RR$. Set $\phi_t:=\pi_{U_t}\in\Aut(\kk)$ for all $t\in\RR$. Thus $\phi_t$ commutes with $\psi$ and $\phi_1\psi=\varphi$.

Let $\ttt$ be the set of elements in $\kk$ which are diagonal with respect to the orthonormal basis $\BBB$. Hence $\ttt$ is maximal abelian in $\kk$ (see Example~\ref{ex:CL}) and $\ttt=\ttt^{\phi_t}$ for each $t\in\RR$. For each $j\in J$, define the operator $E_j\in i\ttt$ by $$E_je_k^{\pm}:=\pm\delta_{jk}e_k^{\pm}\quad\textrm{for all $k\in J$}\quad\textrm{and}\quad E_je_{j_0}:=0.$$
Let $$\ttt_0:=\sppan_{\RR}\{iE_j \ | \ j\in J\}\subseteq\ttt.$$ 
Since for each $j\in J$ and $t\in\RR$, the operator $iE_j$ commutes with $U_t$ and $\widetilde{\sigma}$, the subalgebras $\ttt^{\varphi}\subseteq\kk^{\varphi}$ and $\ttt^{\psi}\subseteq\kk^{\psi}$ both contain $\ttt_0$ and are contained in $\ttt^{\phi_t}$. On the other hand, if $x\in\kk$ centralises $\ttt_0$, then $x$ is diagonal with respect to $\BBB$, that is, $x\in\ttt$. In particular, $x$ commutes with $A\widetilde{\sigma}\inv=U_1$, so that $x\in \kk^{\varphi}$ if and only if $x\in \kk^{\psi}$. If moreover $x\in \kk^{\varphi}$ (or equivalently, $x\in \kk^{\psi}$), then for any $j\in J$ and $\la\in i\RR$ such that $xe_j^+=\la e_j^+$, we have
$$xe_j^-=x\widetilde{\sigma} e_j^+=\widetilde{\sigma} x e_j^+=-\la \widetilde{\sigma} e_j^+=-\la e_j^-.$$
Since in addition
$$xe_{j_0}=x\widetilde{\sigma} e_{j_0}=\widetilde{\sigma} x e_{j_0}=-xe_{j_0},$$
so that $xe_{j_0}=0$, we deduce that $x\in\ttt_0$. This shows that $\ttt^{\varphi}=\ttt^{\psi}=\ttt_0$ and that $\ttt_0$ is maximal abelian in both $\kk^{\varphi}$ and $\kk^{\psi}$.
The set of roots of $\kk^{\psi}_{\CC}$ with respect to $\ttt^{\psi}_{\CC}$ is then of type $B_{J}$ if $S_1\neq\varnothing$ and of type $C_J$ if $S_1=\varnothing$. Accordingly, the set of compact roots for $\widehat{\LLL}_{\psi}(\kk)$ is of type $BC_J^{(2)}$ or $C_J^{(2)}$ (see \cite[Examples~2.9 and 2.10]{Hloopgroups}).

Let now $\mu\in i\ttt_0^*$ be defined by $\mu(E_{j}):=\mu_j:=-\tfrac{n_j}{2N}$ for all $j\in J$. Setting $\Lambda_{\mu}:=i\mu^{\sharp}=\sum_{j\in J}{i\mu_jE_{j}}\in\uu(\HH)$, we then have
$$\frac{d}{dt}U_{t/2\pi}=-\Lambda_{\mu}U_{t/2\pi}=-U_{t/2\pi}\Lambda_{\mu}.$$

We may thus apply Proposition~\ref{prop:untwisting_extension2} and conclude that for any $\nu\in i\ttt_0^*$, there is an isomorphism
$$\widehat{\Phi}\co \widehat{\LLL}_{\varphi}^{\nu}(\kk)\to \widehat{\LLL}_{\psi}^{\mu+\nu}(\kk):(z_1,\xi(t),z_2)\mapsto (z_1,\phi_{t/2\pi}(\xi(t)),z_2)$$
fixing the Cartan subalgebra $\ttt_0^e:=\RR\oplus\ttt_0\oplus\RR$ pointwise. Moreover, $\widehat{\Phi}$ induces an isomorphism of locally affine root systems $$\pi\co \Delta(\widehat{\LLL}_{\varphi}^{\nu}(\kk),\ttt_0^e)_c\to \Delta(\widehat{\LLL}_{\psi}^{\mu+\nu}(\kk),\ttt_0^e)_c\in\{BC_J^{(2)},C_J^{(2)}\}:(\alpha,n)\mapsto \big(\alpha,\tfrac{2n}{N_{\varphi}}-2\mu(\alpha^{\sharp})\big),$$
where $N_{\varphi}$ is the order of $\varphi$. Finally, the Weyl groups $\WW(\widehat{\LLL}_{\varphi}^{\nu}(\kk),\ttt_0^e)$ and $\WW(\widehat{\LLL}_{\psi}^{\mu+\nu}(\kk),\ttt_0^e)$ coincide.
\end{example}

\medskip
\noindent
{\bf Proof of Theorem~\ref{thm:mainintro}.}
This sums up the results of Examples~\ref{ex:CL}, \ref{ex:HL}, \ref{ex:RL} and \ref{ex:CA}. \hspace{\fill}\qedsymbol

\section{Isomorphisms of Weyl groups}\label{section:COWG}
By Theorem~\ref{thm:mainintro}, every affinisation of a simple Hilbert--Lie algebra $\kk$ is isomorphic to some slanted standard affinisation $\widehat{\LLL}_{\psi}^{\nu}(\kk)$ of $\kk$, 
with the same Weyl groups. In turn, $\widehat{\LLL}_{\psi}^{\nu}(\kk)$ and $\widehat{\LLL}_{\psi}(\kk)$ have the same root system with respect to some common Cartan subalgebra, and hence isomorphic Weyl groups. In this section, we give an explicit isomorphism between these Weyl groups. We then present an application of our results to the study of positive energy highest weight representations of $\widehat{\LLL}_{\psi}^{\nu}(\kk)$.

\subsection{Unslanting Weyl groups}\label{subsection:UWG}
Let $\kk$ be a simple Hilbert--Lie algebra, and let $\psi\in\Aut(\kk)$ be of finite order $N\in\NN$. 
Let $\ttt_0$ be a maximal abelian subalgebra of $\kk^{\psi}$ and let $\Delta_{\psi}=\Delta(\kk,\ttt_0)$ be the corresponding root system. Let also $\ttt_0^e:=\RR\oplus \ttt_0\oplus\RR$ be the corresponding Cartan subalgebra of $\g_{\nu}:=\widehat{\LLL}^{\nu}_{\psi}(\kk)$, for any $\nu\in i\ttt_0^*$. 

We assume that $\psi$ is either the identity or one of the three standard automorphisms (cf. \S\ref{subsection:LARS}), so that $\widehat{\Delta}_{\psi}:=\Delta(\g_{\nu},\ttt_0^e)\subseteq \Delta_{\psi}\times\ZZ$ is one of the $7$ locally affine root systems $A_J^{(1)}$, $B_J^{(1)}$, $C_J^{(1)}$, $D_J^{(1)}$, $B_J^{(2)}$, $C_J^{(2)}$ and $BC_J^{(2)}$. For each $\nu\in i\ttt_0^*$, let $\widehat{\WW}_{\nu}:=\WW(\g_{\nu},\ttt_0^e)\subseteq\GL(\ttt_0^e)$ be the Weyl group of $\g_{\nu}$ with respect to $\ttt_0^e$. 

We fix some $\nu\in i\ttt_0^*$. In the notation of \S\ref{subsection:WG}, we then have a semi-direct decomposition $$\widehat{\WW}_{\nu}=\tau(\TTT_{\psi})\rtimes \WW_{\nu},$$
where $\TTT_{\psi}$ is the abelian subgroup of $\ttt_0$ generated by $\{in\check{\alpha}/N \ | \ (\alpha,n)\in\widehat{\Delta}_{\psi}\}$ and $\WW_{\nu}$ is the subgroup of $\widehat{\WW}_{\nu}$ generated by the reflection $r_{(\alpha,0)}$, $\alpha\in\Delta_{\psi}$, defined by
\begin{equation}\label{eqn:ralpha0bis}
r_{(\alpha,0)}(z,h,t)=(z,h,t)-\big(\alpha(h)+it\nu(\alpha^{\sharp})\big)\cdot \big(-i\nu(\check{\alpha}),\check{\alpha},0\big)\quad\textrm{for all $(z,h,t)\in\ttt_0^e$.}
\end{equation}
We also denote by $\overline{r}_{\alpha}$, $\alpha\in\Delta_{\psi}$, the reflections
$$\overline{r}_{\alpha}(z,h,t)=\big(z,h-\alpha(h)\check{\alpha},t\big)$$
in $\GL(\ttt_0^e)$ generating $\WW_0$, so that $\widehat{\WW}_{0}=\tau(\TTT_{\psi})\rtimes \WW_{0}$.

Finally, we recall from \S\ref{subsection:FOA} that for any $\nu\in i\ttt_0^*$, there is a unique element $\nu^{\sharp}\in i\widehat{\ttt_0}$ such that $\langle \nu^{\sharp},h\rangle=\nu(h)$ for all $h\in i\ttt_0$. We set
$$\widehat{\ttt_0^e}:=\RR\oplus \widehat{\ttt_0} \oplus\RR,$$ and we extend each root $\alpha\in\Delta_{\psi}\subseteq i\ttt_0^*$ to a linear functional 
$$\alpha\co \widehat{\ttt_0}\to i\RR: h\mapsto \alpha(h):= \langle h,\alpha^{\sharp}\rangle,$$
so that $\widehat{\WW}_{0}$ and $\widehat{\WW}_{\nu}$ may be viewed as subgroups of $\GL(\widehat{\ttt_0^e})$.

\begin{lemma}\label{lemma:description_action_W}
Let $\alpha_1,\dots,\alpha_n\in\Delta_{\psi}$ for some $n\in\NN$. Then for all $(z,h,t)\in \widehat{\ttt_0^e}$, 
$$r_{(\alpha_n,0)}\dots r_{(\alpha_2,0)}r_{(\alpha_1,0)}.(z,h,t)-(z,h,t)=\Big(i\nu\big(f_{\nu}^{\alpha_1,\dots,\alpha_n}(z,h,t)\big), -f_{\nu}^{\alpha_1,\dots,\alpha_n}(z,h,t),0\Big),$$
where
$$f_{\nu}^{\alpha_1,\dots,\alpha_n}\co \widehat{\ttt_0^e}\to \ttt_0: (z,h,t)\mapsto \sum_{s=1}^{n}{(\alpha_s(h)+it\nu(\alpha_s^{\sharp}))\cdot r_{\alpha_n}\dots r_{\alpha_{s+1}}(\check{\alpha_s})}.$$
\end{lemma}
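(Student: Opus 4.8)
The plan is to argue by induction on $n$, after first isolating the structural features of the reflections $r_{(\alpha,0)}$ that make the first and third coordinates of the difference follow automatically from the second. Reading off (\ref{eqn:ralpha0bis}), each $r_{(\alpha,0)}$ fixes the last coordinate $t$, and the increment it produces in the first coordinate is exactly $-i\nu$ of the increment it produces in the middle coordinate. Concretely, if I write $\iota(v):=(i\nu(v),-v,0)$ for $v\in\ttt_0$, then (\ref{eqn:ralpha0bis}) says
$$r_{(\alpha,0)}(z,h,t)=(z,h,t)+\iota\big((\alpha(h)+it\nu(\alpha^{\sharp}))\check{\alpha}\big).$$
Since $\iota$ is linear and alters only the first two coordinates, it suffices to show that the middle component of $r_{(\alpha_n,0)}\cdots r_{(\alpha_1,0)}.(z,h,t)-(z,h,t)$ equals $-f_{\nu}^{\alpha_1,\dots,\alpha_n}(z,h,t)$; the asserted first coordinate $i\nu(f_{\nu}^{\alpha_1,\dots,\alpha_n}(z,h,t))$ and the vanishing third coordinate then come for free.

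The base case $n=1$ is immediate from the displayed reformulation, since the empty product $r_{\alpha_1}\cdots r_{\alpha_2}$ is the identity. For the inductive step, set $g:=f_{\nu}^{\alpha_1,\dots,\alpha_{n-1}}(z,h,t)$ and $w':=r_{(\alpha_{n-1},0)}\cdots r_{(\alpha_1,0)}$. By the induction hypothesis $w'(z,h,t)=(z,h,t)+\iota(g)$, so its middle coordinate is $h-g$. Applying $r_{(\alpha_n,0)}$ to this point through the reformulation of (\ref{eqn:ralpha0bis}) adds $\iota\big((\alpha_n(h-g)+it\nu(\alpha_n^{\sharp}))\check{\alpha_n}\big)$; hence the middle component of $r_{(\alpha_n,0)}w'(z,h,t)-(z,h,t)$ is
$$-g-\big(\alpha_n(h-g)+it\nu(\alpha_n^{\sharp})\big)\check{\alpha_n}.$$
It remains to identify this with $-f_{\nu}^{\alpha_1,\dots,\alpha_n}(z,h,t)$.

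The key algebraic identity, which I would verify by factoring the outermost reflection out of each summand, is
$$f_{\nu}^{\alpha_1,\dots,\alpha_n}(z,h,t)=r_{\alpha_n}(g)+\big(\alpha_n(h)+it\nu(\alpha_n^{\sharp})\big)\check{\alpha_n}:$$
for $s<n$ the factor $r_{\alpha_n}\cdots r_{\alpha_{s+1}}(\check{\alpha_s})$ equals $r_{\alpha_n}$ applied to the corresponding summand of $g$, while the $s=n$ summand is $(\alpha_n(h)+it\nu(\alpha_n^{\sharp}))\check{\alpha_n}$ with empty product. Using linearity of $r_{\alpha_n}$ together with $r_{\alpha_n}(g)=g-\alpha_n(g)\check{\alpha_n}$, the right-hand side collapses to $g+\big(\alpha_n(h-g)+it\nu(\alpha_n^{\sharp})\big)\check{\alpha_n}$, which is precisely the negative of the middle component computed above. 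This closes the induction and proves the lemma. I expect the only genuine obstacle to be exactly this reconciliation of the conjugated coroots $r_{\alpha_n}\cdots r_{\alpha_{s+1}}(\check{\alpha_s})$ with the coordinatewise accumulation of increments; it is resolved entirely by the recursion above, whose crux is that the coefficient created at the final step involves the \emph{already-updated} middle coordinate $h-g$ rather than $h$, the discrepancy $\alpha_n(g)\check{\alpha_n}$ being absorbed exactly by the reflection $r_{\alpha_n}$ acting on $g$.
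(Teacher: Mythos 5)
Your proof is correct and takes essentially the same route as the paper: an induction on $n$ that peels off the outermost reflection $r_{(\alpha_n,0)}$ via the explicit formula (\ref{eqn:ralpha0bis}), with the crux in both cases being that $r_{\alpha_n}$ factors out of the first $n-1$ summands of $f_{\nu}^{\alpha_1,\dots,\alpha_n}$, so that the discrepancy $\alpha_n(g)\check{\alpha}_n$ is absorbed by $r_{\alpha_n}(g)=g-\alpha_n(g)\check{\alpha}_n$. The only difference is bookkeeping: the paper splits the difference using linearity of $r_{(\alpha_n,0)}$ applied to $r_{(\alpha_{n-1},0)}\dots r_{(\alpha_1,0)}.(z,h,t)-(z,h,t)$, whereas you evaluate $r_{(\alpha_n,0)}$ directly at the updated point, which amounts to the same computation.
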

\begin{proof}
This easily follows by induction on $n$ using (\ref{eqn:ralpha0bis}) and the decomposition
\begin{equation*}
\begin{aligned}
r_{(\alpha_n,0)}\dots r_{(\alpha_1,0)}.(z,h,t)-(z,h,t) = r_{(\alpha_n,0)}.\big(r_{(\alpha_{n-1},0)}\dots r_{(\alpha_1,0)}.(z,h,t)-(z,h,t)\big)\\
+\big(r_{(\alpha_n,0)}.(z,h,t)-(z,h,t)\big).
\end{aligned}
\end{equation*}
\end{proof}

Since, in the notation of Lemma~\ref{lemma:description_action_W}, 
$$\alpha_s(h)+it\nu(\alpha_s^{\sharp})=\alpha_s(h)+it\langle \nu^{\sharp},\alpha_s^{\sharp}\rangle=\alpha_s(h)+\alpha_s(it\nu^{\sharp})=\alpha_s(h+it\nu^{\sharp}),$$
so that
\begin{equation}\label{eqn:fmu0}
f_{\nu}^{\alpha_1,\dots,\alpha_n}(z,h,t)=f_{0}^{\alpha_1,\dots,\alpha_n}(z,h+it\nu^{\sharp},t)\quad\textrm{for all $(z,h,t)\in \widehat{\ttt_0^e}$,}
\end{equation}
we deduce from Lemma~\ref{lemma:description_action_W} that for all $\alpha_1,\dots,\alpha_n\in \Delta_{\psi}$ and all $(z,h,t)\in \widehat{\ttt_0^e}$,
\begin{equation*}
r_{(\alpha_n,0)}\dots r_{(\alpha_1,0)}.(z,h,t)-(z,h,t)=0 \iff \overline{r}_{\alpha_n}\dots \overline{r}_{\alpha_1}.(z,h+it\nu^{\sharp},t)-(z,h+it\nu^{\sharp},t)=0.
\end{equation*}
This implies in particular that the assignment $r_{(\alpha,0)}\mapsto \overline{r}_{\alpha}$ for each $\alpha\in\Delta_{\psi}$ defines a group isomorphism
$$\gamma_{\nu}\co \WW_{\nu}\to \WW_0:r_{(\alpha_n,0)}\dots r_{(\alpha_1,0)}\mapsto \overline{r}_{\alpha_n}\dots \overline{r}_{\alpha_1}.$$
\begin{prop}\label{prop:isomWeyl}
The map $\gamma_{\nu}\co \WW_{\nu}\to \WW_0$ extends to a group isomorphism
$$\widehat{\gamma}_{\nu}\co \widehat{\WW}_{\nu}\to \widehat{\WW}_{0}: \tau_xw\mapsto \tau_x\gamma_{\nu}(w)\quad (\textrm{$x\in \TTT_{\psi}$, $w\in \WW_\nu$}).$$
\end{prop}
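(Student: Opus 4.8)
The plan is to leverage the two semidirect decompositions $\widehat{\WW}_\nu = \tau(\TTT_\psi)\rtimes\WW_\nu$ and $\widehat{\WW}_0 = \tau(\TTT_\psi)\rtimes\WW_0$ recalled in \S\ref{subsection:UWG}, which share the \emph{same} translation subgroup $\tau(\TTT_\psi)$. Since $\gamma_\nu\co\WW_\nu\to\WW_0$ is already a group isomorphism and each element of $\widehat\WW_\nu$ is uniquely of the form $\tau_x w$ with $x\in\TTT_\psi$ and $w\in\WW_\nu$, the assignment $\tau_x w\mapsto\tau_x\gamma_\nu(w)$ is automatically a well-defined bijection onto $\widehat\WW_0$. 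The entire content of the proposition is therefore the homomorphism property, and by the multiplication rule in a semidirect product this reduces to showing that $\gamma_\nu$ intertwines the two conjugation actions on $\tau(\TTT_\psi)$:
\begin{equation*}
w\,\tau_x\,w^{-1} = \gamma_\nu(w)\,\tau_x\,\gamma_\nu(w)^{-1}\qquad (w\in\WW_\nu,\ x\in\TTT_\psi). \tag{$\star$}
\end{equation*}

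Both sides of $(\star)$ lie in $\tau(\TTT_\psi)$ by normality, so it suffices to identify the translation vectors. As $\WW_\nu$ is generated by the reflections $r_{(\alpha,0)}$ with $\gamma_\nu(r_{(\alpha,0)})=\overline r_\alpha$, and as $\tau$ is additive in its subscript while $r_\alpha$ is linear, I would reduce $(\star)$ to the single identity
\[
r_{(\alpha,0)}\,\tau_x\,r_{(\alpha,0)}^{-1} = \tau_{r_\alpha x} = \overline r_\alpha\,\tau_x\,\overline r_\alpha^{-1},
\]
to be checked for each $\alpha\in\Delta_\psi$ and each generator $x=in\check\beta/N$ of $\TTT_\psi$. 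The right-hand equality is an immediate computation from the formulas of \S\ref{subsection:UWG}, using that $\overline r_\alpha$ fixes the $z$- and $t$-coordinates and that $r_\alpha$ preserves $\langle\cdot,\cdot\rangle$ on $\ttt_0$. For the left-hand one I would invoke the relation $\tau_{in\check\beta/N}=r_{(\beta,0)}r_{(\beta,n)}$ from \S\ref{subsection:WG} together with the reflection-conjugation rule $s\,r_\gamma\,s^{-1}=r_{s\gamma}$: conjugation by $r_{(\alpha,0)}$ sends $(\beta,0)$ and $(\beta,n)$ to $(r_\alpha\beta,0)$ and $(r_\alpha\beta,n)$, so that
\[
r_{(\alpha,0)}\,\tau_x\,r_{(\alpha,0)}^{-1} = r_{(r_\alpha\beta,0)}r_{(r_\alpha\beta,n)} = \tau_{in\,r_\alpha\check\beta/N} = \tau_{r_\alpha x},
\]
where $r_\alpha\check\beta$ is the coroot of $r_\alpha\beta$. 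Reassembling the generators then yields $(\star)$ for arbitrary $w$ and $x$.

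The hard part will be justifying that conjugation by the \emph{slanted} reflection $r_{(\alpha,0)}$ indeed carries $(\beta,n)$ to $(r_\alpha\beta,n)$ and returns an honest translation, even though $r_{(\alpha,0)}$ has a nonzero, $\nu$-dependent $z$-component (unlike $\overline r_\alpha$); this is exactly the step where the slant parameter must cancel. The cleanest way to see it is to verify the coroot compatibility $r_{(\alpha,0)}\big((\beta,n)^\vee\big)=(r_\alpha\beta,n)^\vee$ directly from the formula (\ref{eqn:coroot}) for the coroots, the relation $\check\alpha=\tfrac{2}{(\alpha,\alpha)}\alpha^\sharp$, and the identity $\nu\big((r_\alpha\beta)^\sharp\big)=\nu(\beta^\sharp)-\beta(\check\alpha)\,\nu(\alpha^\sharp)$ coming from $(r_\alpha\beta)^\sharp=\beta^\sharp-\beta(\check\alpha)\alpha^\sharp$. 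Once this coroot identity is established, the standard reflection-conjugation formula applies verbatim, the two conjugation actions in $(\star)$ coincide, and the homomorphism property follows.
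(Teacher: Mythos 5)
Your proposal is correct and takes essentially the same route as the paper: the paper's entire proof consists of the single observation that $r_{(\alpha,0)}\tau_x r_{(\alpha,0)}^{-1}=\tau_{r_{\alpha}(x)}=\overline{r}_{\alpha}\tau_x\overline{r}_{\alpha}^{-1}$ for all $\alpha\in\Delta_{\psi}$ and $x\in\TTT_{\psi}$, which is precisely the generator identity to which you reduce the homomorphism property via the two semidirect decompositions. Your further verification of the left-hand equality --- using $\tau_{in\check{\beta}/N}=r_{(\beta,0)}r_{(\beta,n)}$, the conjugation rule for reflections, and the coroot compatibility $r_{(\alpha,0)}\big((\beta,n)^{\vee}\big)=(r_{\alpha}\beta,n)^{\vee}$ --- is sound and simply fills in the detail that the paper leaves to the reader as an ``observation''.
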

\begin{proof}
This follows from the observation that for each $\alpha\in \Delta_{\psi}$ and $x\in \TTT_{\psi}$ we have
\[r_{(\alpha,0)}\tau_xr_{(\alpha,0)}\inv=\tau_{r_{\alpha}(x)}=\overline{r}_{\alpha}\tau_x\overline{r}_{\alpha}\inv \quad \textrm{where} \quad 
r_{\alpha}\co \ttt_0\to \ttt_0: h\mapsto h-\alpha(h)\check{\alpha}.\qedhere\] 
\end{proof}

\begin{remark}
If $\nu^{\sharp}\in i\ttt_0$, then $\widehat{\gamma}_{\nu}$ is just the conjugation by $\tau_{-i\nu^{\sharp}}$.
\end{remark}

\subsection{Application to positive energy representations}\label{subsection:ATPER}
We place ourselves in the context of \S\ref{subsection:UWG} (although we will only need to assume that $\psi$ is the identity or standard in Proposition~\ref{prop:slantedPEC} and Theorem~\ref{thm:PEC} below) and keep the same notation. 
Set
$$\bc:=(i,0,0)\in i\ttt_0^e\quad\textrm{and}\quad \bd:=(0,0,-i)\in i\ttt_0^e,$$
so that $\bc$ is central in $(\g_{\nu})_{\CC}$ and $\ad(\bd)$ has eigenvalue $\tfrac{n}{N}+\nu(\alpha^{\sharp})$ on $e_n\otimes \kk_{\CC}^{(\alpha,n)}$, $\alpha\in\Delta_{\psi}$ (cf. (\ref{eqn:Dmu})).

For a weight $$\la\in i(\ttt_0^e)^*\cong i\RR\oplus i\ttt_0^*\oplus i\RR,$$
we set $$\la^0:=\la|_{(\ttt_0)_{\CC}}\in i\ttt_0^*, \quad\la_c:=\la(\bc)\in\RR\quad\textrm{and}\quad \la_d:=\la(\bd)\in\RR.$$
Note that we may extend the map $\sharp\co i\ttt_0^*\to i\widehat{\ttt_0}$ to a map $$\sharp\co i(\ttt_0^e)^*\to i\widehat{\ttt_0^e}:\la\mapsto \la^{\sharp}:=-\la_d\bc+(\la^0)^{\sharp}-\la_c\bd$$
satisfying $\kappa((z,h,t),\la^{\sharp})=\la((z,h,t))$ for all $(z,h,t)\in i\ttt_0^e$, where we have again denoted by $\kappa$ the hermitian extension of $\kappa|_{\ttt_0^e\times\ttt_0^e}$ (cf. \S\ref{subsection:DE}) to $(\ttt_0^e)_{\CC}\times (\widehat{\ttt_0^e})_{\CC}$. Since $\sharp$ is bijective, this allows in particular to view the Weyl group $\widehat{\WW}_{\nu}$
not only as a subgroup of $\GL(i\widehat{\ttt_0^e})$ as in \S\ref{subsection:UWG}, but also as a subgroup of $\GL(i(\ttt_0^e)^*)$, where the action is characterised by
$(\widehat{w}.\la)^{\sharp}=\widehat{w}.\la^{\sharp}$ for all $\widehat{w}\in \widehat{\WW}_{\nu}$.

Fix some $\la\in i(\ttt_0^e)^*$, and assume that $\la_c\neq 0$ and that $\la$ is \emph{integral} for $\g_{\nu}$, in the sense that $\la((\alpha,n)^{\vee})\in\ZZ$ for all compact roots $(\alpha,n)\in\widehat{\Delta}_{\psi}$. 
It then follows from \cite[Theorem~4.10]{Ne09} that $\g_{\nu}$ admits an (irreducible) integrable highest-weight module $L_{\nu}(\la)$ of highest weight $\la$, whose corresponding set of weights is given by
\begin{equation}\label{eqn:Pmu}
\PP_{\la}=\PP_{\la}^{\nu}=\conv\!\big(\widehat{\WW}_{\nu}.\la\big)\cap\big(\la+\ZZ[\widehat{\Delta}_{\psi}]\big).
\end{equation}
Let $$\rho_{\la}=\rho_{\la}^{\nu}\co  \g_{\nu}\to\End(L_{\nu}(\la))$$
denote the corresponding representation. Note that $\rho_{\la}$ is unitary with respect to 
some inner product on $L_{\nu}(\la)$ which is uniquely determined up to a positive factor (see \cite[Theorem~4.11]{Ne09}).

Let $\nu'\in i\ttt_0^*$, and extend the derivation $D_{\nu'}=D_0+\overline{D}_{\nu'}$ of $\LLL_{\psi}(\kk)\subseteq\g_{\nu}$ to a skew-symmetric derivation of $\g_{\nu}$ by requiring that $D_{\nu'}(\ttt_0^e)=\{0\}$ (cf. \S\ref{subsection:D}). Since the derivation $D_{\nu'}$  preserves the root space decomposition of $(\g_{\nu})_{\CC}$, it follows from (\ref{eqn:Dmu}) that $\rho_{\la}$ can be extended to a representation
$$\widetilde{\rho}_{\la}=\widetilde{\rho}_{\la}^{\thinspace\nu,\nu'}\co \g_{\nu}\rtimes \RR D_{\nu'}\to \End(L_{\nu}(\la))$$
of the semi-direct product $\g_{\nu}\rtimes \RR D_{\nu'}$ by setting
$$\widetilde{\rho}_{\la}(D_{\nu'})v_{\gamma}:=i\chi\big(\gamma-\la\big)v_{\gamma}$$
for all $\gamma\in\PP_{\la}$ and all $v_{\gamma}\in L_{\nu}(\la)$ of weight $\gamma$, where $\chi\co \ZZ[\widehat{\Delta}_{\psi}]\to \RR$ is the character defined by
\begin{equation}\label{eqn:chi}
\chi((\alpha,n))=\tfrac{n}{N}+\nu'(\alpha^{\sharp})\quad\textrm{for all $(\alpha,n)\in \widehat{\Delta}_{\psi}$}.
\end{equation}
The representation $\widetilde{\rho}_{\la}$ is said to be of \emph{positive energy} if the spectrum of $H_{\nu'}:=-i\widetilde{\rho}_{\la}(D_{\nu'})$ is bounded from below. If this is the case, the infimum of the spectrum of $H_{\nu'}$ is called the \emph{minimal energy level} of $\widetilde{\rho}_{\la}$.
In view of (\ref{eqn:Pmu}), the representation $\widetilde{\rho}_{\la}$ is of positive energy if and only if
$$\inf\chi\big(\widehat{\WW}_{\nu}.\la-\la\big)>-\infty.$$

To match the context of \S\ref{subsection:UWG} (namely, $\widehat{\WW}_{\nu}\subseteq\GL(i\widehat{\ttt_0^e})$ instead of $\widehat{\WW}_{\nu}\subseteq\GL(i(\ttt_0^e)^*)$), 
we identify $\chi$ with an element of $i\widehat{\ttt_0^e}$ by setting
$$\chi(\mu)=\kappa(\mu^{\sharp},\chi)=\mu(\chi)\quad\textrm{for all $\mu\in \ZZ[\widehat{\Delta}_{\psi}]$},$$
so that $$\chi(\widehat{w}.\la-\la)=\kappa(\widehat{w}.\la^{\sharp}-\la^{\sharp},\chi)=\kappa( \widehat{w}\inv.\chi-\chi,\la^{\sharp})=\la(\widehat{w}\inv.\chi-\chi)\quad\textrm{for all $\widehat{w}\in \widehat{\WW}_{\nu}$.}$$ With this identification, $\widetilde{\rho}_{\la}$ is thus a positive energy representation if and only if
\begin{equation}\label{eqn:PEC}
\inf\la\big(\widehat{\WW}_{\nu}.\chi-\chi\big)>-\infty.
\end{equation}
Write $\chi=\chi_c\bc+\chi^0+\chi_d\bd\in i\widehat{\ttt_0^e}$ for some $\chi_c,\chi_d\in\RR$ and some $\chi^0\in i\widehat{\ttt_0}$. Since for all $(\alpha,n)\in \widehat{\Delta}_{\psi}$, \begin{equation}
\begin{aligned}
\chi((\alpha,n))&=\kappa\big((\alpha,n)^{\sharp},\chi\big)=\kappa\big(\big(-i(\tfrac{n}{N}+\nu(\alpha^{\sharp})),\alpha^{\sharp},0\big),\big(i\chi_c,\chi^0,-i\chi_d\big)\big)\\
&=\langle \alpha^{\sharp},\chi^0\rangle +\chi_d(\tfrac{n}{N}+\nu(\alpha^{\sharp}))=\alpha(\chi^0+\chi_d\nu^{\sharp})+\chi_d\tfrac{n}{N},
\end{aligned}
\end{equation}
we then deduce from (\ref{eqn:chi}) that 
\begin{equation}\label{eqn:descmu}
\chi^0=(\nu')^{\sharp}-\nu^{\sharp}\quad\textrm{and}\quad\chi_d=1.
\end{equation}

Define 
$$\la_{\nu}:=\la-\la_c\nu\in i(\ttt_0^e)^*\quad\textrm{and}\quad\chi_{\nu}:=\chi+\chi_d\nu^{\sharp}\in i\widehat{\ttt_0^e},$$
where we view $\nu\in i\ttt_0^*$ as a weight in $i(\ttt_0^e)^*$ by setting $\nu(\bc)=\nu(\bd):=0$. 

\begin{prop}\label{prop:slantedPEC}
Let $\la\in i(\ttt_0^e)^*$ and $\chi=\chi_c\bc+\chi^0+\chi_d\bd\in i\widehat{\ttt_0^e}$. Then
$$\la(\widehat{\WW}_{\nu}.\chi-\chi)=\la_{\nu}(\widehat{\WW}_{0}.\chi_{\nu}-\chi_{\nu}).$$
\end{prop}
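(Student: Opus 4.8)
The plan is to prove the following pointwise refinement: for every $\widehat{w}\in\widehat{\WW}_{\nu}$, with image $\overline{\widehat{w}}:=\widehat{\gamma}_{\nu}(\widehat{w})\in\widehat{\WW}_{0}$ under the isomorphism of Proposition~\ref{prop:isomWeyl},
$$\la(\widehat{w}.\chi-\chi)=\la_{\nu}(\overline{\widehat{w}}.\chi_{\nu}-\chi_{\nu}).$$
Since $\widehat{\gamma}_{\nu}\co\widehat{\WW}_{\nu}\to\widehat{\WW}_{0}$ is a bijection, letting $\widehat{w}$ range over $\widehat{\WW}_{\nu}$ then yields the asserted equality of subsets of $\RR$. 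The first step is to write $\widehat{w}=\tau_xw$ with $x\in\TTT_{\psi}$ and $w=r_{(\alpha_n,0)}\cdots r_{(\alpha_1,0)}\in\WW_{\nu}$; by the very definition of $\widehat{\gamma}_{\nu}$ one then has $\overline{\widehat{w}}=\tau_x\overline{w}$ with $\overline{w}=\overline{r}_{\alpha_n}\cdots\overline{r}_{\alpha_1}\in\WW_0$, the \emph{same} translation $\tau_x$ occurring on both sides.

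Next I would compute the two displacements before translating. By Lemma~\ref{lemma:description_action_W}, $w.\chi-\chi$ has $\widehat{\ttt_0}$-component $-f$ and central component $i\nu(f)$, where $f:=f_{\nu}^{\alpha_1,\dots,\alpha_n}(\chi)$; since $r_{(\alpha,0)}$ specialises to $\overline{r}_{\alpha}$ at $\nu=0$, the $\nu=0$ instance of the same lemma gives that $\overline{w}.\chi_{\nu}-\chi_{\nu}$ has vanishing central component and $\widehat{\ttt_0}$-component $-f_{0}^{\alpha_1,\dots,\alpha_n}(\chi_{\nu})$. The bridge between the two is identity~(\ref{eqn:fmu0}): as $\chi_{\nu}=\chi+\chi_d\nu^{\sharp}$ differs from $\chi$ only in its $\widehat{\ttt_0}$-slot, and as the $t$-coordinate of $\chi$ is $-i\chi_d$ (so that $it\nu^{\sharp}=\chi_d\nu^{\sharp}$), one gets $f_{0}^{\alpha_1,\dots,\alpha_n}(\chi_{\nu})=f_{\nu}^{\alpha_1,\dots,\alpha_n}(\chi)=f$. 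Hence the two displacements share the same $\widehat{\ttt_0}$-component $-f$, differing only by the central term $i\nu(f)$ present on the $\nu$-side.

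Then I would apply the common translation $\tau_x$ via its explicit formula, subtract $\chi$ (resp. $\chi_{\nu}$), and pair the result with $\la$ (resp. $\la_{\nu}=\la-\la_c\nu$, noting $\la_{\nu}(\bc)=\la_c$). A direct expansion shows that all contributions match term by term, except for a single residual proportional to $\la_c\chi_d\big(\langle\nu^{\sharp},x\rangle+\nu(x)\big)$: this is where the central term $i\nu(f)$ on the left, the shift $\chi_d\nu^{\sharp}$ absorbed into $\chi_{\nu}$ on the right, and the $-\la_c\nu$ correction turning $\la$ into $\la_{\nu}$ all meet. I expect the coordinate bookkeeping here to be routine once the displacement formulas are in hand, so I would keep it brief.

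The genuine obstacle, and the only substantive point, is to see that this residual vanishes, i.e. that $\langle\nu^{\sharp},x\rangle=-\nu(x)$ for all $x\in\TTT_{\psi}$. Here it is essential that $\TTT_{\psi}\subseteq\ttt_0$ consists of \emph{real} elements, so that $x\in\ttt_0$ rather than $i\ttt_0$, whereas the defining property of $\sharp$ reads $\langle\nu^{\sharp},h\rangle=\nu(h)$ only for $h\in i\ttt_0$. Applying this with $h=ix\in i\ttt_0$ and using that the hermitian form is linear in its first and \emph{antilinear} in its second argument --- as fixed by the normalisation $\langle i\mu^{\sharp},h\rangle=i\mu(h)$ of Proposition~\ref{prop:untwisting_extension2} --- one finds
$$-i\langle\nu^{\sharp},x\rangle=\langle\nu^{\sharp},ix\rangle=\nu(ix)=i\nu(x),$$
whence $\langle\nu^{\sharp},x\rangle=-\nu(x)$. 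Substituting this identity cancels the residual term exactly and completes the proof.
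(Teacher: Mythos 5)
Your proposal is correct and follows essentially the same route as the paper's proof: the same pointwise claim transported through $\widehat{\gamma}_{\nu}$ from Proposition~\ref{prop:isomWeyl}, the same decomposition $\widehat{w}=\tau_x r_{(\boldsymbol\alpha,0)}$, and the same use of Lemma~\ref{lemma:description_action_W} together with (\ref{eqn:fmu0}) to identify the two displacements, followed by direct expansion. The only (welcome) difference is presentational: you isolate and justify the cancellation identity $\langle\nu^{\sharp},x\rangle=-\nu(x)$ for real $x\in\TTT_{\psi}\subseteq\ttt_0$, which the paper's chain of equalities uses tacitly (it is exactly what makes the terms $-\chi_d\langle\nu^{\sharp},x\rangle$ and $i\chi_d\la_c\nu(x)$ disappear between the third and fourth lines of its computation), and your residual term proportional to $\la_c\chi_d\big(\langle\nu^{\sharp},x\rangle+\nu(x)\big)$ is precisely the quantity that vanishes there.
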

\begin{proof}
Let $\widehat{\gamma}_{\nu}\co \widehat{\WW}_{\nu}\to \widehat{\WW}_{0}$ be the group isomorphism provided by Proposition~\ref{prop:isomWeyl}.
We claim that 
\begin{equation*}
\la(\widehat{w}.\chi-\chi)=\la_{\nu}(\widehat{\gamma}_{\nu}(\widehat{w}).\chi_{\nu}-\chi_{\nu})\quad\textrm{for all $\widehat{w}\in \widehat{\WW}_{\nu}$.}
\end{equation*}
Given a tuple of roots $\boldsymbol\alpha=(\alpha_1,\dots,\alpha_n)\in \Delta_{\psi}^n$, we use the notation
$$r_{(\boldsymbol\alpha,0)}= r_{(\alpha_n,0)}\dots r_{(\alpha_1,0)}\quad\textrm{and}\quad \overline{r}_{\boldsymbol\alpha}=\overline{r}_{\alpha_n}\dots \overline{r}_{\alpha_1}.$$
It then follows from Lemma~\ref{lemma:description_action_W} and $(\ref{eqn:fmu0})$ that for all $(z,h,t)\in i\widehat{\ttt_0^e}$,
\begin{equation*}
\left\{
\begin{array}{ll}
r_{(\boldsymbol\alpha,0)}.(z,h,t)-(z,h,t)=\Big(i\nu\big(f_{\nu}^{\boldsymbol\alpha}(z,h,t)\big), -f_{\nu}^{\boldsymbol\alpha}(z,h,t),0\Big),\\
\overline{r}_{\boldsymbol\alpha}.(z,h,t)-(z,h,t)=-\Big(0, f_{\nu}^{\boldsymbol\alpha}(z,h-it\nu^{\sharp},t),0\Big).
\end{array}
\right.
\end{equation*}
Let now $\widehat{w}\in \widehat{\WW}_{\nu}$, which we write as $\widehat{w}=\tau_xr_{(\boldsymbol\alpha,0)}$ for some $x\in \TTT_{\psi}$ and some tuple of roots $\boldsymbol\alpha$. Then
\begin{equation*}
\begin{aligned}
\la_{\nu}\big(\widehat{\gamma}_{\nu}(\widehat{w}).\chi_{\nu}-\chi_{\nu}\big) &= \la_{\nu}\big(\tau_x\overline{r}_{\boldsymbol\alpha}.(i\chi_c,\chi^0+\chi_d \nu^{\sharp},-i\chi_d)-(i\chi_c,\chi^0+\chi_d \nu^{\sharp},-i\chi_d)\big)\\
&= \la_{\nu}\big(\tau_x.(i\chi_c,\chi^0+\chi_d\nu^{\sharp}-f_{\nu}^{\boldsymbol\alpha}(\chi),-i\chi_d)-(i\chi_c,\chi^0+\chi_d \nu^{\sharp},-i\chi_d)\big)\\
&= \la_{\nu}\big(\big(-\chi_d\langle \nu^{\sharp},x\rangle-\langle\chi^0-f_{\nu}^{\boldsymbol\alpha}(\chi),x\rangle+\tfrac{i\chi_d\langle x,x\rangle}{2},-f_{\nu}^{\boldsymbol\alpha}(\chi)-i\chi_dx,0\big)\big)\\ 
&= i\la_c\big(\langle\chi^0-f_{\nu}^{\boldsymbol\alpha}(\chi),x\rangle-\tfrac{i\chi_d\langle x,x\rangle}{2}\big)-\la^0\big(f_{\nu}^{\boldsymbol\alpha}(\chi)+i\chi_dx\big)+\la_c\nu\big(f_{\nu}^{\boldsymbol\alpha}(\chi)\big)\\
&= \la\big(\big(i\nu\big(f_{\nu}^{\boldsymbol\alpha}(\chi)\big)-\langle\chi^0-f_{\nu}^{\boldsymbol\alpha}(\chi),x\rangle+\tfrac{i\chi_d\langle x,x\rangle}{2}, -f_{\nu}^{\boldsymbol\alpha}(\chi)-i\chi_dx,0\big)\big)\\
&= \la\big(\tau_x.\big(i\chi_c+i\nu\big(f_{\nu}^{\boldsymbol\alpha}(\chi)\big), \chi^0-f_{\nu}^{\boldsymbol\alpha}(\chi),-i\chi_d\big)-(i\chi_c,\chi^0,-i\chi_d)\big)\\
&= \la\big(\tau_xr_{(\boldsymbol\alpha,0)}.(i\chi_c, \chi^0,-i\chi_d)-(i\chi_c,\chi^0,-i\chi_d)\big)\\
&= \la\big(\widehat{w}.\chi-\chi\big).
\end{aligned}
\end{equation*}
This concludes the proof of the proposition.
\end{proof}

\begin{theorem}\label{thm:PEC}
Let $\la\in i(\ttt_0^e)^*$ be an integral weight for $\g_{\nu}$ with $\la_c=\la(\bc)\neq 0$. Then for any $\nu,\nu'\in i\ttt_0^*$, the following assertions are equivalent:
\begin{enumerate}
\item[(i)]
The highest weight representation $\widetilde{\rho}_{\la}^{\thinspace\nu,\nu'}\co \g_{\nu}\rtimes \RR D_{\nu'}\to \End(L_{\nu}(\la))$ is of positive energy.
\item[(ii)]
$M_{\nu,\nu'}:=\inf \la_{\nu}(\widehat{\WW}_{0}.\chi_{\nu}-\chi_{\nu})>-\infty$, where $\la_{\nu}:=\la-\la_c\nu\in i(\ttt_0^e)^*$ and $\chi_{\nu}:=(\nu')^{\sharp}+\bd\in i\widehat{\ttt_0^e}$.
\end{enumerate}
Moreover, if $M_{\nu,\nu'}>-\infty$, then $M_{\nu,\nu'}$ is the minimal energy level of $\widetilde{\rho}_{\la}^{\thinspace\nu,\nu'}$.
\end{theorem}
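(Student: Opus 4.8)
The plan is to obtain Theorem~\ref{thm:PEC} as a bookkeeping consequence of the positive-energy criterion established just before the statement together with Proposition~\ref{prop:slantedPEC}, the only genuine analytic content being a convexity argument for the ``moreover'' clause. For the equivalence of (i) and (ii), I would start from the reformulation \eqref{eqn:PEC}, namely that $\widetilde{\rho}_\la^{\thinspace\nu,\nu'}$ is of positive energy exactly when $\inf\la(\widehat{\WW}_\nu.\chi-\chi)>-\infty$, where $\chi=\chi_c\bc+\chi^0+\chi_d\bd\in i\widehat{\ttt_0^e}$ is the character encoding $D_{\nu'}$ and where $\chi^0=(\nu')^\sharp-\nu^\sharp$, $\chi_d=1$ by \eqref{eqn:descmu}. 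Proposition~\ref{prop:slantedPEC} rewrites $\la(\widehat{\WW}_\nu.\chi-\chi)$ as $\la_\nu(\widehat{\WW}_0.\chi_\nu-\chi_\nu)$, so that taking infima gives $M_{\nu,\nu'}>-\infty$ as the positive-energy condition, modulo matching the two descriptions of $\chi_\nu$.

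That matching is the content of the second step. By definition $\chi_\nu=\chi+\chi_d\nu^\sharp=\chi_c\bc+(\nu')^\sharp+\bd$, using $\chi^0+\nu^\sharp=(\nu')^\sharp$ and $\chi_d=1$; this differs from the statement's $\chi_\nu=(\nu')^\sharp+\bd$ only by the central term $\chi_c\bc$. I would discard it by checking on the generators $\tau_x$ and $\overline{r}_\alpha$ of $\widehat{\WW}_0$ that both fix $\bc=(i,0,0)$, whence $\widehat{w}.\chi_\nu-\chi_\nu$ does not depend on the $\bc$-component of $\chi_\nu$. This identifies $M_{\nu,\nu'}$ with the quantity in (ii) and completes (i)$\Leftrightarrow$(ii).

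For the minimal energy level I would first record that $H_{\nu'}=-i\widetilde{\rho}_\la(D_{\nu'})$ acts on a weight vector $v_\gamma$ by the real scalar $\chi(\gamma-\la)$, so that the infimum of its spectrum is $\inf_{\gamma\in\PP_\la}\chi(\gamma-\la)$. The task is to show this equals $M_{\nu,\nu'}$, which by the first two paragraphs and the identity $\chi(\widehat{w}.\la-\la)=\la(\widehat{w}\inv.\chi-\chi)$ coincides with $\inf\chi(\widehat{\WW}_\nu.\la-\la)$. I would sandwich: since $\widehat{\WW}_\nu.\la\subseteq\PP_\la$ by \eqref{eqn:Pmu} (Weyl elements shift $\la$ inside $\la+\ZZ[\widehat{\Delta}_\psi]$), the infimum over $\PP_\la$ is at most that over the orbit; conversely $\PP_\la-\la\subseteq\conv(\widehat{\WW}_\nu.\la-\la)$ and $\chi$ is linear, so the infimum over $\PP_\la$ is at least the infimum of $\chi$ over the convex hull, which for a linear functional equals its infimum over the orbit. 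The two bounds force equality.

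The main obstacle is mild: all the structural work is already packaged in Proposition~\ref{prop:slantedPEC}, and the real point is the convexity argument of the last paragraph, i.e. the elementary fact that a linear functional has the same infimum on a set as on its convex hull, which is exactly what permits passing between the full weight polytope $\PP_\la$ and the Weyl orbit. I would also keep track that all infima are taken in $[-\infty,+\infty)$, so that equivalence of finiteness is not vacuous and the value is meaningful precisely when it is finite.
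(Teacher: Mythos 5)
Your proposal is correct and takes essentially the same route as the paper: the paper's proof of Theorem~\ref{thm:PEC} is precisely the combination of (\ref{eqn:PEC}), Proposition~\ref{prop:slantedPEC}, and the description (\ref{eqn:descmu}) of $\chi$. The two points you elaborate --- that the undetermined central component $\chi_c\bc$ is harmless because both $\widehat{\WW}_{\nu}$ and $\widehat{\WW}_{0}$ fix $\bc$, and the convexity sandwich identifying $\inf_{\gamma\in\PP_{\la}}\chi(\gamma-\la)$ with $\inf\chi(\widehat{\WW}_{\nu}.\la-\la)$ via (\ref{eqn:Pmu}) and integrality of $\la$ --- are exactly what the paper leaves implicit in \S\ref{subsection:ATPER}, so your write-up is a faithful, slightly more detailed, version of the same argument.
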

\begin{proof}
This readily follows from (\ref{eqn:PEC}) and Proposition~\ref{prop:slantedPEC}, where the above description of $\chi_{\nu}$ follows from (\ref{eqn:descmu}).
\end{proof}

\medskip
\noindent
{\bf Proof of Theorem~\ref{thm:PECintro}.}
By Theorem~\ref{thm:mainintro}(iii), the Weyl groups of $\widehat{\LLL}_{\varphi}^{\nu}(\kk)$ and $\widehat{\LLL}_{\psi}^{\mu+\nu}(\kk)$ with respect to $\ttt_0^e$ coincide. By (\ref{eqn:PEC}), the representation $\widetilde{\rho}_{\la}\co \widehat{\LLL}_{\varphi}^{\nu}(\kk)\rtimes \RR D_{\nu'}\to \End(L_{\nu}(\la))$ is thus of positive energy if and only if 
$$\inf\la\big(\widehat{\WW}_{\mu+\nu}.\chi-\chi\big)>-\infty,$$
where $\widehat{\WW}_{\mu+\nu}=\widehat{\WW}_{\psi}$ is the Weyl group of the slanted standard affinisation $\widehat{\LLL}_{\psi}^{\mu+\nu}(\kk)$ of $\kk$ and $\chi=(\nu')^{\sharp}-\nu^{\sharp}+\bd$ (see (\ref{eqn:descmu})). Thus Theorem~\ref{thm:PECintro} follows from Proposition~\ref{prop:slantedPEC}, where the character $\chi$ in the statement of Theorem~\ref{thm:PECintro} corresponds to $\chi_{\mu+\nu}=(\nu')^{\sharp}+\mu^{\sharp}+\bd$ in the above notation.
 \hspace{\fill}\qedsymbol

\bibliographystyle{amsalpha} 
\bibliography{these}

\def\cprime{$'$}
\providecommand{\bysame}{\leavevmode\hbox to3em{\hrulefill}\thinspace}
\providecommand{\MR}{\relax\ifhmode\unskip\space\fi MR }
\providecommand{\MRhref}[2]{%
  \href{http://www.ams.org/mathscinet-getitem?mr=#1}{#2}
}
\providecommand{\href}[2]{#2}
\begin{thebibliography}{MN15b}

\bibitem[HN12]{convexhull}
Georg Hofmann and Karl-Hermann Neeb, \emph{On convex hulls of orbits of
  {C}oxeter groups and {W}eyl groups}, preprint (2012), to appear in Münster
  Journal of Mathematics, http://arxiv.org/abs/1204.2095.

\bibitem[LN04]{LN04}
Ottmar Loos and Erhard Neher, \emph{Locally finite root systems}, Mem. Amer.
  Math. Soc. \textbf{171} (2004), no.~811, x+214.

\bibitem[MN15a]{PEClocfin}
Timoth\'ee Marquis and Karl-Hermann Neeb, \emph{Positive energy representations
  for locally finite split {L}ie algebras}, Preprint (2015), to appear in Int.
  Math. Res. Not. IMRN, http://arxiv.org/abs/1507.06077.

\bibitem[MN15b]{PEClocaff}
\bysame, \emph{Positive energy representations of double extensions of
  {H}ilbert loop algebras}, in preparation (2015).

\bibitem[MY06]{MY06}
Jun Morita and Yoji Yoshii, \emph{Locally extended affine {L}ie algebras}, J.
  Algebra \textbf{301} (2006), no.~1, 59--81.

\bibitem[MY15]{MY15}
\bysame, \emph{Locally loop algebras and locally affine {L}ie algebras}, J.
  Algebra \textbf{440} (2015), 379--442.

\bibitem[Nee10]{Ne09}
Karl-Hermann Neeb, \emph{Unitary highest weight modules of locally affine {L}ie
  algebras}, Quantum affine algebras, extended affine {L}ie algebras, and their
  applications, Contemp. Math., vol. 506, Amer. Math. Soc., Providence, RI,
  2010, pp.~227--262.

\bibitem[Nee14]{Hloopgroups}
\bysame, \emph{Semibounded unitary representations of double extensions of
  {H}ilbert-loop groups}, Ann. Inst. Fourier (Grenoble) \textbf{64} (2014),
  no.~5, 1823--1892.

\bibitem[NS01]{NeSt01}
Karl-Hermann Neeb and Nina Stumme, \emph{The classification of locally finite
  split simple {L}ie algebras}, J. Reine Angew. Math. \textbf{533} (2001),
  25--53.

\bibitem[Sch61]{Schue61}
John~R. Schue, \emph{Cartan decompositions for {$L^{\ast} $} algebras}, Trans.
  Amer. Math. Soc. \textbf{98} (1961), 334--349.

\bibitem[Yos10]{YY08}
Yoji Yoshii, \emph{Locally extended affine root systems}, Quantum affine
  algebras, extended affine {L}ie algebras, and their applications, Contemp.
  Math., vol. 506, Amer. Math. Soc., Providence, RI, 2010, pp.~285--302.

\end{thebibliography}

\end{document}